\theoremstyle{plain}
\newtheorem{theorem}{Theorem}[section]
\newtheorem{lemma}[theorem]{Lemma}
\newtheorem{cor}[theorem]{Corollary}
\newtheorem{prop}[theorem]{Proposition}
\theoremstyle{definition}
\newtheorem{defn}[theorem]{Definition}
\newtheorem{assumption}[theorem]{Assumption}
\newtheorem{example}[theorem]{Example}
\theoremstyle{remark}
\newtheorem{rem}[theorem]{Remark}
\def\rom#1{\mbox{\leavevmode\skip@\lastskip\unskip\/%
           \ifdim\skip@=\z@\else\hskip\skip@\fi{\rm{#1}}}}
\def\ps@pprintTitle{%
     \let\@oddhead\@empty
     \let\@evenhead\@empty
     \def\@oddfoot{%
       \hfill\thepage \hfill}%
     \let\@evenfoot\@oddfoot}
\newcommand{\D}{\mathbb{D}}
\newcommand{\E}{\mathbb{E}}
\newcommand{\N}{\mathbb{N}}
\newcommand{\R}{\mathbb{R}}
\newcommand{\cB}{\mathcal{B}}
\newcommand{\cE}{\mathcal{E}}
\newcommand{\cL}{\mathcal{L}}
\newcommand{\cT}{\mathcal{T}}
\renewcommand{\a}{\alpha}\renewcommand{\b}{\beta}
\newcommand{\gm}{\gamma}\newcommand{\dl}{\delta}
\newcommand{\eps}{\varepsilon}\newcommand{\zt}{\zeta}
\newcommand{\lm}{\lambda}\newcommand{\kp}{\kappa}
\newcommand{\sg}{\sigma}
\newcommand{\ph}{\phi}
\newcommand{\om}{\omega}
\newcommand{\Gm}{\Gamma}
\newcommand{\Om}{\Omega}
\newcommand{\bone}{\mathbf{1}}
\newcommand{\la}{\langle}
\newcommand{\ra}{\rangle}
\newcommand{\wg}{\wedge}
\newcommand{\sd}{\mathsf{d}}
\newcommand{\nb}{D}
\newcommand{\del}{\partial}
\newcommand{\loc}{{\mathrm{loc}}}
\newcommand{\qad}{\phantom{\le{}}}
\DeclareMathOperator{\Dom}{Dom}
\newcommand{\relmiddle}[1]{\mathrel{}\middle#1\mathrel{}}
\DeclareMathOperator*{\essinf}{ess\,inf}
\DeclareMathOperator*{\esssup}{ess\,sup}
\numberwithin{equation}{section}
\begin{document}

\begin{frontmatter}



\title{%
An integrated version of Varadhan's asymptotics for lower-order
perturbations of strong local Dirichlet forms\tnoteref{t1}}
\tnotetext[t1]{This study was supported by  JSPS KAKENHI Grant Number JP15H03625.}

\author{Masanori Hino}
\ead{hino@math.kyoto-u.ac.jp}
\address{Department of Mathematics, Kyoto University, Kyoto 606--8502, Japan}
\author{Kouhei Matsuura}
\ead{kouhei.matsuura.r3@dc.tohoku.ac.jp}
\address{Mathematical Institute, Tohoku University, Aoba, Sendai 980--8578, Japan}
\begin{abstract}
The studies of Ram\'irez~\cite{R}, Hino--Ram\'irez~\cite{HR}, and Ariyoshi--Hino~\cite{AH} showed that
an integrated version of Varadhan's asymptotics 
holds for Markovian semigroups associated with arbitrary strong local symmetric Dirichlet forms.
In this paper, we consider 
non-symmetric
bilinear forms that are the sum of strong local symmetric Dirichlet forms and lower-order perturbed terms. We give sufficient conditions for the associated semigroups to have asymptotics of the same type. 
\end{abstract}

\begin{keyword}
Varadhan's asymptotics\sep short-time behavior\sep Dirichlet form\sep intrinsic metric 

\MSC[2010] 31C25\sep 60J60\sep 58J37\sep 47D07
\end{keyword}

\end{frontmatter}

\tableofcontents
\section{Introduction}
Let $(E,\cB,\mu)$ be a $\sg$-finite measure space and $(\cE^{0},\D)$ a symmetric strong local Dirichlet form on the $L^2$ space of $(E,\cB,\mu)$.
 Let $\{T_{t}^{0}\}_{t>0}$ denote the semigroup associated with $(\cE^{0},\D)$, and set
 $P_{t}^{0}(A,B)=\int_{A} T_{t}^{0} \bone_{B}\,d\mu$ for $t>0$ and $A,B \in \cB$ with positive and finite measure. 
  In \cite{AH}, the following small-time asymptotic estimate for $\{T_{t}^{0}\}_{t>0}$ was proved as a generalization of results from previous work~\cite{R,H2,HR}:
\begin{equation}
 \lim_{t \to 0} t \log P_{t}^{0}(A,B)=-\frac{\sd(A,B)^{2}}{2}. \label{eq:varadhan}
\end{equation}
  Here, $\sd(A,B)$ is the intrinsic distance between $A$ and $B$, which can be determined from only $(\cE^{0},\D)$ (see {\cite[p.\ 1241]{AH}} or Definition~\ref{def:intrinsic} below for details). 
Similar small-time asymptotics of transition densities have been studied extensively. These are usually called Varadhan-type estimates,
in reference to~\cite{V}. 
In particular, that the estimate holds was proved in \cite{N} for a class of symmetric and uniform elliptic diffusion processes on Lipschitz manifolds. This is one of the most general results.
Asymptotics of the form~\eqref{eq:varadhan} can be considered as an 
integrated version of Varadhan's asymptotics.

  The purpose of this paper is to extend the formula \eqref{eq:varadhan} to a class of non-symmetric bilinear forms.   
  Specifically, we first assume that $(\cE^0,\D)$ mentioned above is expressed as
\[
\cE^0(f,g)=\frac12\int_E (\nb f,\nb g)_H\,d\mu,\quad f,g\in\D,
\]
 where $\nb$ is a first-order derivation operator taking values in a separable Hilbert space $H$. 
Our main object is to obtain small-time asymptotics for a non-symmetric form $(\cE,\D)$ given by the sum of $\cE^{0}$ and the lower-order perturbed term $\int_{E}(b,D f)_{H}g \,d\mu+\int_{E}(c,D g)_{H}f \,d\mu+\int_E V  fg\,d\mu$ (see \eqref{eq:cE}).
 When the perturbed term is small relative to $\cE^0$, the form $(\cE,\D)$ becomes a lower-bounded bilinear form and has an associated positivity-preserving semigroup $\{T_t\}_{t>0}$ on $L^2(E,\mu)$.
 For measurable sets $A$ and $B$ having positive and finite $\mu$-measure, let $P_t(A,B)=\int_A T_t\bone_B\,d\mu$, as before.
  We study the conditions on $b$, $c$, and $V$ that suffice for the semigroup $\{T_t\}_{t>0}$ to have the same integrated Varadhan's asymptotics as $\{T_{t}^{0}\}_{t>0}$. That is, for
  \begin{equation}\label{eq:asymp}
\lim_{t\to0}t \log P_{t}(A,B)=\lim_{t\to0}t \log P_{t}^{0}(A,B)=-\frac{\sd(A,B)^{2}}{2}.
   \end{equation}   
What kind of restrictions should we impose on $b$, $c$, and $V$ to guarantee the validity of \eqref{eq:asymp}? 
It is reasonable to expect that \eqref{eq:asymp} would hold if they were sufficiently smaller than $\cE^0$ 
in terms
of quadratic forms.
From another perspective, 
we can make a probabilistic argument, exemplified in the following typical case.
  Let $(E,H,\mu)$ be an abstract Wiener space, and suppose that $(\cE^{0},\D)$ and $(\cE,\D)$ are defined as  \begin{align*}
  \cE^{0}(f,g)&=\frac{1}{2}\int_{E}(\nb f,\nb g)_H\,d\mu,\\
  \cE(f,g)&=\cE^{0}(f,g)+\int_{E}(b, \nb f)_{H}g\,d\mu,
  \quad f,g\in \D:=\D^{1,2},
  \end{align*}
  where $D$ denotes the $H$-derivative in the Malliavin calculus, $b$ 
is
an $H$-valued measurable function on $E$, and $\D^{1,2}$ is the first-order $L^2$-Sobolev space on $E$. 
If $\exp(\gm |b|^{2}_{H})$ is $\mu$-integrable for some $\gm>8$, then by using the logarithmic Sobolev inequality, we can prove that $(\cE,\D)$ is 
well-defined as
a lower-bounded bilinear form and that there exists a corresponding semigroup $\{T_t\}_{t>0}$ on $L^2(E,\mu)$ (see Example~\ref{ex:2}).
Moreover,  
$\{T_t\}_{t>0}$ has 
a probabilistic representation as
\[
T_t f(x)=\E_x\left[ f(X_t)\exp\left(M_t-\frac12\la M\ra_t\right)\right],
\]
where $(\{X_t\}_{t\ge0},\{\mathbb{P}_x\}_{x\in E})$ is the Ornstein--Uhlenbeck process associated with $(\cE^0,\D)$ and $\{M_t\}_{t\ge0}$ is a martingale additive functional suitably associated with $b$ (see, e.g., \cite{H}). Note, in particular, 
that the quadratic variation of $M$ is given by
$\la M\ra_t=\int_0^t |b(X_s)|_H^2\,ds$.
From H\"{o}lder's inequality, for measurable sets $A$ and $B$ with positive 
$\mu$-measure,
\begin{align}
\int_A T_{t}\bone_{B}\,d\mu 
&\le \left(\int_A \E_{x}\left[\bone_{B}(X_{t})\right]d\mu\right)^{1/p}  \left(\int_{A}\E_{x} \left[\exp \left( 2q M_t-\frac12\la 2qM\ra_t\right) \right]d\mu \right)^{1/2q} \notag\\
&\qad \times \left(\int_{A}\E_{x} \left[\exp\left((2q^{2}-q)\la M\ra_t \right) \right]d\mu \right)^{1/2q},
\label{eq:intro}
\end{align}
where $p>1$ and $q$ is the 
conjugate exponent of $p$. 
The first term of the right-hand side is $P^0_t(A,B)^{1/p}$.
The second term is dominated by $\mu(A)^{1/2q}$. 
The third term is estimated by Jensen's inequality:
\begin{equation*}
\int_{A}\E_{x} \left[\exp\left((2q^{2}-q)\int_{0}^{t}\left|b(X_{s})\right|_H^{2}ds \right) \right]d\mu \le \frac{1}{t}\int_{0}^{t}\int_{A}\E_x\left[\exp \left((2q^{2}-q)t\left|b(X_s)\right|_H^{2}\right)\right]d\mu\,ds.
\end{equation*}
By the exponential integrability of $|b|_H^2$, the right-hand side is equal to 
\[
\frac1t\int_0^t \int_A T^0_s\left(\exp\left((2q^2-q)t|b|_H^2\right)\right)d\mu\,ds
=\frac1t\int_0^t \int_E (T^0_s\bone_A)\exp\left((2q^2-q)t|b|_H^2\right)d\mu\,ds,
\]
which is finite for sufficiently small positive values of $t$ and converges to $\mu(A)$ as $t\to0$.
Combining 
these estimates
and the asymptotics with respect to 
$(\cE^0,\D)$ 
and letting $p\to1$, we obtain the upper estimate
\begin{equation*}
\varlimsup_{t \to 0} t\log P_{t}(A,B) \le -\frac{\sd(A,B)^{2}}{2}.
\end{equation*}
This kind of probabilistic argument is applicable to more general situations, by using a generalized Cameron--Martin--Maruyama--Girsanov formula (see, e.g., \cite{LLZ,RZ,FK}). 
The exponential integrability condition imposed above is not exactly consistent with smallness in the sense of quadratic forms. 
Indeed, 
in the estimate of \eqref{eq:intro}, we used
the fact that $\exp(\gm|b|_H^2)$ is $\mu$-integrable for only some $\gm>0$.
Therefore, it is reasonable to consider two types of smallness---smallness in term of quadratic forms and in terms of some exponential integrability---in describing the conditions sufficient for \eqref{eq:asymp}.

In this paper, we introduce conditions that take the observation above into consideration 
(see conditions (B.2)$_{A,B}$, (B.2$'$), and Proposition~\ref{prop:B2})
and prove the upper estimate under their assumptions (Theorem~\ref{th:upper}). 
Moreover, we prove that the lower estimate holds under minimal assumptions on $b$, $c$, and $V$ along with the assumption of 
the validity of
the upper estimate (Theorem~\ref{th:lower}). Combining these two results gives sufficient conditions for the integrated Varadhan estimates.
As in the previous studies~\cite{R,HR,AH}, the proof is purely analytic and only a measurable structure is imposed on the state space.
We remark that even for $b=c$, that is, even with $(\cE,\D)$ as a symmetric form, our results are new.

Because the proof is long, we briefly explain the broad ideas of the proof here. 
The upper estimate (Theorem~\ref{th:upper}) is proved in the spirit of Davies--Gaffney's method. 
In previous works~\cite{R,HR,AH}, they define $\sg(t)=\int_E (e^{\a w}T_t\bone_B)^2\,d\mu$ for given $\a>0$ and 
$w$ with $|\nb w|_H\le1$ $\mu$-a.e., and deduce the key differential inequality $\sg'(t)\le \a^2 \sg(t)$. 
Solving this inequality and optimizing it with respect to $\a$ and $w$ yields the desired estimate. 
Under the assumptions of our theorem, however, the perturbed terms cannot be controlled.
Instead, we define $\sg$ in the form $\sg(t)=\int_E (e^{\a w}T_t\bone_B)^{p(t)}\,d\mu$, where $p(t)=q-St$ with $q>2$ and $S>0$ being chosen suitably. Since $\{T_t\}_{t>0}$ can be extended to a semigroup on $L^p(\mu)$ for $p$ near $2$ in our setting, $\sg(t)$ is finite for small $t$. 
The variable exponent $p(t)$ means that the derivative of $\sg$ involves an extra logarithmic term, which suppresses the influence on $b$ and $c$. 
The price to pay for this is that the resulting differential inequality is 
coarser, in the form $\sg'(t)\le (1+\eps)\a^2\sg(t)+C\sg(t)\max\{0,-\log\sg(t)\}$.
Fortunately, the extra logarithmic term has no influence on the Varadhan-type estimate.
Introducing a variable exponent is a standard technique for estimating heat kernel densities (see, e.g., \cite{D}), but (unlike in such a context) $p(t)$ is taken to be a decreasing function in this study.
The definition of $\sg$ shown above is valid when $\mu$ is a finite measure; in general cases, we further need to modify the definition of $\sg$ (see \eqref{eq:sigma} and \eqref{eq:tu}) to avoid some technical obstacles. 
For this reason, we need a series of quantitative estimates, which makes the proof long.

The proof of the lower estimate is based on previous studies~\cite{R,HR,AH}, but the argument is more complicated due to the perturbed terms and the fact that the semigroup $\{T_t\}_{t>0}$ preserves positivity but is not Markovian.
We will outline the proof by the following formal argument.
We see the function $u_{t}=- t \log T_{t}\bone_{B}$ satisfies the relation
\begin{align*}
t\left(\partial_{t}u_{t}-\frac{\cL^{0}}{2}u_{t}\right)=u_{t}-\frac{1}{2}|D u_{t}|_{H}^{2}+(\text{extra terms involving $b$, $c$, and $V$}),
\end{align*}
where $\cL^{0}$ is the generator of $\{T^{0}_t\}_{t>0}$. 
(This identity corresponds to \eqref{identitypde} in the actual argument.)
If we assume for argument that the left-hand side converges to $0$ as $t\to0$ and the last term of the right-hand side is negligible, then the limit $u_0$ of $u_t$ (if it exists) will satisfy $|\nb u_0|_H^2=2u_0$.
What is actually obtained is an inequality of the form $|\nb u_0|_H^2\le 2u_0$, which implies
$|\nb\sqrt{2u_{0}}|_H \le 1$.
Furthermore, $u_{0}=0$ $\mu$-a.e.\ on $B$ should be satisfied.
Under these conditions, we have 
the formal inequality
\[
\lim_{t \to 0} \sqrt{-2t \log T_{t}\bone_{B}(x)} \le \sd(\{x\},B)
\]
by the definition of $\sd$, which is close to the lower-side estimate. 
Several difficulties, such as that $u_t$ is not necessarily bounded, make it hard to justify this procedure directly.
  To cope with problems such as the integrability (or not) of various terms and the existence (or not) of limits, 
we introduce a nice truncating function $\phi$ and bump functions $\{\chi_{k}\}$, 
and consider $\bar{\phi}_{t} \chi_{k}$ in place of  $u_{t}$, where  $\bar{\phi}_{t}=t^{-1}\int_{0}^{t}\phi(- s \log T_{s}\bone_{B})\,ds$. 
 Note that these cut-off functions are slightly different from those in \cite{AH,HR} in order to deal with the lack of the Markov property of $\{T_t\}_{t>0}$. 
This modification results in an increasing number of terms in the quantitative estimates as the proof progresses, which makes the proof longer and more technical than without the modification.
  
  This paper organized as follows. In Section~2, we introduce a framework and state the main theorems. 
Section~3 provides the proof of the upper estimate (Theorem~\ref{th:upper}). 
Section~4 provides the proof of the lower estimate (Theorem~\ref{th:lower}). 
In the last section, we prove some auxiliary propositions, discuss the conditions imposed on the theorems, and show some typical examples.

\section{Framework}
Let $(E,\cB,\mu)$ be a $\sg$-finite measure space, and
$H$ a  real separable Hilbert space. The inner product and norm of $H$ will be denoted by $(\cdot,\cdot)_H$ and $|\cdot|_H$, respectively.
The set of all real-valued measurable functions on $E$ is denoted by $L^0(\mu)$, where two functions are identified if they coincide on $\mu$-a.e.
For $p\in[1,\infty]$, the real $L^p$ space on $(E,\cB,\mu)$ is denoted by $L^p(\mu)$, and its norm by $\|\cdot\|_p$. The $L^2$ space of $H$-valued 
measurable 
functions on $(E,\cB,\mu)$ is  denoted by $L^2(\mu;H)$, and its norm by $\|\cdot\|_2$.

Let $\D$ be a dense subspace of $L^2(\mu)$, and $\nb$ be a closed linear operator from $L^2(\mu)$ to $L^2(\mu;H)$ with domain $\D$. 
We assume that $\nb$ has the following 
derivation property:
For arbitrary functions $f_1,f_2,\dots,f_m\in\D$ and $C^1$ functions $F$ on $\R^m$ with bounded first-order derivatives and $F(0)=0$, 
$F(f_1,\dots,f_m)$ belongs to $\D$ and
\begin{equation}\label{eq:chain}
\nb\bigl(F(f_1,\dots,f_m)\bigr)
=\sum_{j=1}^m \frac{\del F}{\del x_j}(f_1,\dots,f_m) \nb F_j.
\end{equation}
Then, a bilinear form $(\cE^0,\D)$ on $L^2(\mu)$, defined by
\[
\cE^0(f,g)=\frac12\int_E (\nb f,\nb g)_H\,d\mu,\quad f,g\in\D,
\]
is a Dirichlet form on $L^2(\mu)$. 
Moreover, this bilinear form has a strong local property: For any $f\in\D$ and $C^1$-functions $F,G$ on $\R$ with bounded first-order derivatives such that the supports of $F$ and $G$ are disjoint, $\cE(F(f)-F(0),G(f)-G(0))=0$. 
For other equivalent statements, see \cite[Proposition~I.5.1.3]{BH}, where this property is called a local property.

For $l\ge0$, we write $\cE^0_l(f,g)$ for $\cE^0(f,g)+l\int_E fg\,d\mu$. We also use $\cE^0(f)$ and $\cE^0_l(f)$ to denote  $\cE^0(f,f)$ and $\cE^0_l(f,f)$, respectively.
The space $\D$ becomes a Hilbert space with the inner product $(f,g)\mapsto \cE^0_1(f,g)$.
The following proposition is fundamental.
\begin{prop}\label{prop:D}
If a function $f\in\D$ is constant $\mu$-a.e.\ on a set $A\in\cB$, then $\nb f=0$ $\mu$-a.e.\ on $A$.
\end{prop}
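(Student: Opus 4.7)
The plan is to exploit the closedness of $\nb$ together with the derivation property \eqref{eq:chain} via a careful approximation by smooth functions that flatten out at the constant value taken by $f$ on $A$.

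First I would let $c$ denote the $\mu$-a.e.\ constant value of $f$ on $A$, and construct a sequence $\{\chi_n\}_{n\in\N}$ of $C^1$ functions $\chi_n\colon\R\to[0,1]$ with $\chi_n\equiv0$ on $[c-1/(2n),c+1/(2n)]$, $\chi_n\equiv1$ outside $[c-1/n,c+1/n]$, and hence $\chi_n(x)\to\bone_{\{x\ne c\}}$ pointwise as $n\to\infty$. Then I would define
\[
F_n(x)=\int_0^x \chi_n(t)\,dt,\qquad x\in\R.
\]
Each $F_n$ is $C^1$ with $F_n(0)=0$ and $|F_n'|\le1$, so the derivation assumption applies: $F_n(f)\in\D$ and $\nb F_n(f)=\chi_n(f)\,\nb f$ in $L^2(\mu;H)$.

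Next I would pass to the limit in $n$ using dominated convergence. Since $|F_n(x)|\le|x|$ and $F_n(x)\to x$ for every $x\in\R$, the bound $|F_n(f)|\le|f|\in L^2(\mu)$ gives $F_n(f)\to f$ in $L^2(\mu)$. Simultaneously $|\chi_n(f)\nb f|_H\le|\nb f|_H\in L^2(\mu;H)$, and pointwise $\chi_n(f)\nb f\to\bone_{\{f\ne c\}}\nb f$; dominated convergence then yields $\nb F_n(f)\to\bone_{\{f\ne c\}}\nb f$ in $L^2(\mu;H)$. Because $\nb$ is closed, the limit must coincide with $\nb f$, so $\nb f=\bone_{\{f\ne c\}}\nb f$ in $L^2(\mu;H)$; equivalently, $\bone_{\{f=c\}}\nb f=0$ $\mu$-a.e. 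Since $A\subset\{f=c\}$ modulo $\mu$-null sets, this is exactly $\nb f=0$ $\mu$-a.e.\ on $A$.

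There is no real obstacle; the only mild care needed is to verify that $F_n$ meets the regularity hypotheses of \eqref{eq:chain} (namely $F_n\in C^1$, $F_n'$ bounded, and $F_n(0)=0$) uniformly in $n$, so that both dominated convergence bounds are legitimate and closedness of $\nb$ can be invoked to extract the identity $\bone_{\{f=c\}}\nb f=0$.
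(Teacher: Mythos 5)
Your argument is correct, but it follows a different route from the paper. The paper's proof is a two-line reduction: for constant value $0$ it simply quotes \cite[Proposition~I.7.1.4]{BH} (the fact that $\nb f=0$ $\mu$-a.e.\ on $\{f=0\}$), and for a nonzero constant $\a$ it applies the chain rule \eqref{eq:chain} to a single $C^1$ function $F$ with $F(0)=F(\a)=0$ and $F'(\a)\ne0$, so that $F(f)=0$ $\mu$-a.e.\ on $A$ reduces the problem to the cited case. You instead reprove the underlying Bouleau--Hirsch lemma from scratch, and for an arbitrary level $c$: you flatten $F_n'=\chi_n$ near $c$, use \eqref{eq:chain} to get $\nb F_n(f)=\chi_n(f)\nb f$, pass to the limit by dominated convergence (the bounds $|F_n(f)-f|\le 2|f|$ and $|\chi_n(f)\nb f|_H\le|\nb f|_H$ are indeed enough, also when $\mu(E)=\infty$), and invoke closedness of $\nb$ to identify $\nb f=\bone_{\{f\ne c\}}\nb f$. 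What your approach buys is self-containedness --- no external citation, and you get the stronger statement $\nb f=0$ $\mu$-a.e.\ on the full level set $\{f=c\}$ directly --- at the cost of a longer argument; the paper's proof is shorter because it leans on the known result for the zero level set. Only cosmetic remarks: $F_n\in C^1$ needs only continuity of $\chi_n$ (your $C^1$ requirement is harmless overkill), and when you say dominated convergence gives $F_n(f)\to f$ in $L^2(\mu)$ it is worth noting explicitly that the domination is of $|F_n(f)-f|^2$ by $4|f|^2\in L^1(\mu)$, since uniform closeness $|F_n(f)-f|\le 2/n$ alone would not suffice on an infinite measure space.
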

\begin{proof}
Suppose that $f=\a$ $\mu$-a.e.\ on $A$ for some constant $\a$. 
Then, if $\a=0$, the conclusion follows from \cite[Proposition~I.7.1.4]{BH}. If $\a\ne0$, 
we can take a $C^1$ function $F$ on $\R$ with bounded derivative such that 
$F(0)=F(\a)=0$ and $F'(\a)\ne0$. Then, since $F(f)=0 $ $\mu$-a.e.\ on $A$,
\[
0=\nb(F(f))=F'(f)\nb f=F'(\a)\nb f\quad \text{$\mu$-a.e.\ on }A.
\]
This implies that $\nb f=0$ $\mu$-a.e.\ on $A$.
\end{proof}
For $A \in \cB$, we set
\begin{align*}
\D_A&=\{f\in\D\mid f=0\ \mu\text{-a.e.\ on } E\setminus A\},\\
\D_{A,b}&=\D_A\cap L^\infty(\mu), \\
\D_{A,b,+}&=\{ f \in \D_{A,b} \mid f \ge0 \ \mu\text{-a.e.} \}.
\end{align*}
We follow \cite{AH} in introducing the concept of measurable nests and related function spaces. 
\begin{defn}
An increasing sequence $\{E_k\}_{k=1}^\infty$ in $\cB$ is called a \emph{measurable nest}\footnote{In \cite{AH}, it is called just a nest.} if the following conditions are satisfied.
\begin{enumerate}
\item For every $k\in\N$, there exists $h_k\in\D$ such that $h_k\ge 1$ $\mu$-a.e.\ on $E_k$.
\item The set $\bigcup_{k=1}^\infty \D_{E_k}$ is dense in $\D$.
\end{enumerate}
\end{defn}
\begin{rem}
We note that measurable nests exist, from \cite[Lemma~3.1]{AH}. 
For every $k \in \N$, we have $\mu(E_{k})<\infty$ because of condition (i). By condition (ii), $\mu(E \setminus \bigcup_{k=1}^{\infty} E_{k})=0$ follows.
If $\{E_k\}_{k=1}^\infty$ and $\{E'_k\}_{k=1}^\infty$ are both measurable nests, then so is $\{E_k\cap E'_k\}_{k=1}^\infty$, from \cite[Lemma~3.2]{AH}.
\end{rem}
\begin{defn}
For a measurable nest $\{E_k\}_{k=1}^\infty$ and $p\in[1,\infty]$, we set
\begin{align*}
L^p_\loc(\mu,\{E_k\})&=\{f\in L^0(\mu)\mid f\bone_{E_k}\in L^p(\mu)\text{ for every $k\in\N$}\},\\
\D_{\loc}(\{E_k\})&=\left\{f\in L^0(\mu)\;\middle|\;\parbox{0.39\hsize}{%
There exist $\{f_k\}_{k=1}^\infty\subset\D$ such that $f=f_k$ $\mu$-a.e.\ on $E_k$ for each $k\in\N$}\,\right\},\\
\D_{\loc,b}(\{E_k\})&=\D_{\loc}(\{E_k\})\cap L^\infty(\mu),\\
\D_{\loc,b,+}(\{E_k\})&=\left\{ f \in\D_{\loc,b}(\{E_k\})  \;\middle|\; f \ge 0 \ \mu\text{-a.e.} \right\}.
\end{align*}
\end{defn}
We remark that 
$L^p(\mu)\subset L^p_\loc(\mu,\{E_k\})\subset L^q_\loc(\mu,\{E_k\})$ 
for $1\le q\le p\le \infty$.
For $f\in \D_{\loc}(\{E_k\})$, $\nb f$ is defined as an $H$-valued measurable function on $E$ by $\nb f=\nb f_k$ on $E_k$, where $f_k\in\D$ and $f_k=f$ $\mu$-a.e.\ on $E_k$.
From Proposition~\ref{prop:D}, $\nb f$ is well-defined up to $\mu$-equivalence.
\begin{defn}
For a measurable nest $\{E_k\}_{k=1}^\infty$, we set
\[
\D_0(\{E_k\})=\bigl\{f\in \D_{\loc,b}(\{E_k\})\bigm| |\nb f|_H\le 1\ \mu\text{-a.e.}\bigr\}.
\]
\end{defn}
This definition is consistent with \cite[Definition~2.6]{AH},
which considers more general situations. 
The function space $\D_0(\{E_k\})$ does not depend on the choice of $\{E_k\}_{k=1}^\infty$, from \cite[Proposition~3.9]{AH}; 
we therefore denote it as $\D_0$.
We now define the intrinsic distance between two sets as follows.
\begin{defn}[{%
see \cite[p.~1241]{AH}}]\label{def:intrinsic}
For $A,B\in\cB$ with positive $\mu$ measures, we define
\[
\sd(A,B)=\sup_{f\in \D_0}\left\{\essinf_{x\in A}f(x)-\esssup_{x\in B}f(x)\right\}\in[0,\infty],
\]
where the essential infimum $\essinf$ and essential supremum $\esssup$ are taken with respect to $\mu$.
\end{defn}
We introduce the concept of a distance-like function $\sd_B$ from the set $B$ and quote a result from \cite{AH}.
For $B\in\cB$ and $N\ge0$, define
\[
\D_{B,N}=\{f\in \D_0\mid f=0\text{ on $B$ and }0\le f\le N\ \mu\text{-a.e.}\}.\footnote{This definition is slightly different from that in \cite{AH}, but the difference is unimportant in our context.}
\]
For $x,y\in\R$, we let $x\vee y$ and $x\wg y$ denote $\max\{x,y\}$ and $\min\{x,y\}$, respectively.
\begin{prop}[{\cite[Proposition~3.11]{AH}}]\label{prop:distance}
For each $B\in \cB$, there exists a unique $[0,+\infty]$-valued measurable function $\sd_B$ on $E$ (up to $\mu$-null sets) such that, for every $N>0$, $\sd_B\wg N$ is the maximal element of $\D_{B,N}$: $\sd_B\wg N\in \D_{B,N}$ and $f\le\sd_B\wg N$ $\mu$-a.e.\ for every $f\in \D_{B,N}$. Moreover, $\sd(A,B)=\essinf_{x\in A}\sd_B(x)$ for every $A\in \cB$.
\end{prop}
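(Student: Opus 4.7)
The plan is to construct, for each $N>0$, a maximal element $M_N$ of $\D_{B,N}$, and then set $\sd_B:=\sup_N M_N$, which will automatically satisfy $\sd_B\wg N=M_N$. The final identity $\sd(A,B)=\essinf_{x\in A}\sd_B(x)$ will fall out by unwinding Definition~\ref{def:intrinsic}.

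\medskip
\emph{Step 1 (lattice property).} First I would verify that $\D_0$ is closed under pointwise maxima. Approximating $(x,y)\mapsto x\vee y$ by $C^1$ functions vanishing at the origin with bounded first-order derivatives and applying the chain rule \eqref{eq:chain} to local representatives in $\D$ on each $E_k$, one obtains $f\vee g\in \D_{\loc,b}(\{E_k\})$ with $\nb(f\vee g)=\bone_{\{f\ge g\}}\nb f+\bone_{\{f<g\}}\nb g$, using Proposition~\ref{prop:D} applied to $f-g$ on $\{f=g\}$. Thus $|\nb(f\vee g)|_H\le 1$ $\mu$-a.e.\ whenever $f,g\in\D_0$; since vanishing on $B$ and $[0,N]$-valuedness are preserved by $\vee$, $\D_{B,N}$ is closed under finite maxima.

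\medskip
\emph{Step 2 (construction of $M_N$).} By $\sg$-finiteness, fix a strictly positive $\phi\in L^1(\mu)$ and set $s_N:=\sup\{\int_E f\phi\,d\mu:f\in\D_{B,N}\}\le N\|\phi\|_1$. Pick $f_n\in\D_{B,N}$ with $\int f_n\phi\,d\mu\to s_N$; replacing $f_n$ by $f_1\vee\dots\vee f_n$ (still in $\D_{B,N}$ by Step~1), the sequence becomes monotone and $M_N:=\lim_n f_n$ exists $\mu$-a.e.\ as a $[0,N]$-valued measurable function vanishing on $B$. Once $M_N\in\D_0$ is established, maximality is immediate: for any $g\in\D_{B,N}$, $g\vee M_N\in\D_{B,N}$ gives $\int(g\vee M_N)\phi\,d\mu=s_N=\int M_N\phi\,d\mu$ (by definition of $s_N$ and dominated convergence), forcing $g\le M_N$ $\mu$-a.e.\ since $\phi>0$.

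\medskip
\emph{Step 3 (the technical core: $M_N\in\D_0$).} Fix $k\in\N$ and let $g_{n,k+1}\in\D$ represent $f_n$ on $E_{k+1}$. Using the cutoff $\chi:=(h_{k+1}\wg 1)\vee 0\in\D\cap L^\infty(\mu)$, which equals $1$ on $E_{k+1}$ (whence $\nb\chi=0$ on $E_{k+1}$ by Proposition~\ref{prop:D}), and a smooth truncation $\phi_N\in C^1(\R)$ with $\phi_N(0)=0$ and $\phi_N=\mathrm{id}$ on $[0,N]$, I would consider the candidate extensions $\tilde g_{n,k}:=\phi_N(g_{n,k+1})\cdot\chi\in\D$, where the product lies in $\D$ because $\D\cap L^\infty(\mu)$ is an algebra by \eqref{eq:chain}. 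These satisfy $\tilde g_{n,k}=f_n$ on $E_{k+1}$, $\|\tilde g_{n,k}\|_\infty\le N$, and $|\nb\tilde g_{n,k}|_H\le 1$ on $E_{k+1}$. The principal obstacle is obtaining a uniform (in $n$) bound on $\cE^0_1(\tilde g_{n,k})$, since a priori $|\nb g_{n,k+1}|_H$ is controlled only on $E_{k+1}$. This can be arranged by further modifying $g_{n,k+1}$ using successive cutoffs drawn from the nest together with the density of $\bigcup_\ell \D_{E_\ell}$ in $\D$, so that the modified extensions have support in a finite level of the nest while still agreeing with $f_n$ on $E_k$ and preserving the gradient bound there. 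Granted such a uniform bound, weak compactness in the Hilbert space $(\D,\cE^0_1)$ and Mazur's lemma yield $\tilde h_k\in\D$ with $\tilde h_k=M_N$ $\mu$-a.e.\ on $E_k$; lower semicontinuity of $f\mapsto|\nb f|_H$ (applied after passing to an a.e.\ convergent sequence of convex combinations) then gives $|\nb M_N|_H\le 1$ $\mu$-a.e.\ on $E_k$. Since $k\in\N$ is arbitrary, $M_N\in\D_0$.

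\medskip
\emph{Step 4 (assembly).} Maximality implies $M_N\le M_{N'}$ for $N\le N'$, so $\sd_B:=\sup_N M_N$ is a well-defined $[0,+\infty]$-valued measurable function. For $N'\ge N$, $M_{N'}\wg N\in\D_{B,N}$ gives $M_{N'}\wg N\le M_N$; taking $\sup_{N'}$ yields $\sd_B\wg N\le M_N$, and the reverse is obvious, so $\sd_B\wg N=M_N$. Uniqueness of $\sd_B$ modulo $\mu$-null sets follows from uniqueness of each $M_N$. For the identity $\sd(A,B)=\essinf_A\sd_B$: the direction $\ge$ follows from $\sd_B\wg N\in\D_{B,N}\subset\D_0$ via Definition~\ref{def:intrinsic}, yielding $\sd(A,B)\ge\essinf_A(\sd_B\wg N)=\min(\essinf_A\sd_B,N)\to\essinf_A\sd_B$; the direction $\le$ follows by noting that, for any $f\in\D_0$ with $c:=\esssup_B f$, the function $((f-c)_+)\wg N$ lies in $\D_{B,N}$ (the subtraction of the constant $c$ is made sense of locally via multiplication by a nest cutoff from $h_k$), hence is $\le M_N$, and taking $\essinf_A$ and $N\to\infty$ gives $\essinf_A f-c\le\essinf_A\sd_B$. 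Taking the supremum over $f\in\D_0$ completes the proof.
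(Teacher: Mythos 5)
First, note that the paper itself does not prove this proposition: it is quoted verbatim from \cite[Proposition~3.11]{AH}, so your argument has to stand on its own. Your overall route (closure of $\D_{B,N}$ under maxima, a maximizing sequence against a strictly positive integrable weight, local weak compactness, then assembly over $N$ and the two inequalities for $\sd(A,B)$) is the natural one, and Steps 1, 2 and 4 are essentially sound modulo routine smoothing/truncation details. The problem is Step 3, which you yourself call the technical core and then do not carry out: the uniform bound on $\cE^0_1(\tilde g_{n,k})$ is precisely the nontrivial content of the proposition, and the sentence ``this can be arranged by further modifying $g_{n,k+1}$ using successive cutoffs drawn from the nest together with the density of $\bigcup_\ell \D_{E_\ell}$ in $\D$'' is an assertion, not a proof.

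Moreover, as literally stated the arrangement you describe is in general not available. Your construction controls $\nb g_{n,k+1}$ only on $E_{k+1}$, and your cutoff $\chi=(h_{k+1}\wg 1)\vee 0$ equals $1$ on $E_{k+1}$ but need not vanish outside any level of the nest, so the term $\chi\,\phi_N'(g_{n,k+1})\nb g_{n,k+1}$ is completely uncontrolled (and $n$-dependent) off $E_{k+1}$. To kill it you need a cutoff in $\D$ equal to $1$ on (essentially all of) $E_k$ and vanishing off some $E_\ell$; for an arbitrary measurable nest such a function need not exist, so your requirement that the modified extensions ``agree with $f_n$ on $E_k$'' while having support in a finite nest level cannot be guaranteed. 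What does exist --- and what this paper proves only later, in the proposition of Section~4.2 producing a refined nest $\{\hat E_k\}$ and functions $\chi_k\in\D$ with $0\le\chi_k\le1$, $\chi_k=1$ on $\hat E_k\subset E_k$ and $\chi_k=0$ on $E\setminus E_k$ --- is a cutoff equal to $1$ only on the smaller set $\hat E_k$. With that ingredient your scheme does work: $\tilde g_n=\phi_N(g_{n,k})\chi_k$ agrees with $f_n$ on $\hat E_k$ and satisfies $|\nb \tilde g_n|_H\le \|\phi_N'\|_\infty \bone_{E_k}+N|\nb\chi_k|_H\in L^2(\mu)$ uniformly in $n$ (recall $\mu(E_k)<\infty$), so Mazur's lemma and a.e.\ convergence of the gradients give $|\nb M_N|_H\le1$ $\mu$-a.e.\ on $\hat E_k$, and one concludes $M_N\in\D_0$ by the nest-independence of $\D_0$ quoted from \cite[Proposition~3.9]{AH}. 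So the gap is repairable, but the repair requires either proving or citing this cutoff/nest-refinement machinery, and the agreement with $f_n$ must be weakened from $E_k$ to $\hat E_k$; as written, Step 3 leaves the decisive estimate unproved.
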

We define $\nb\sd_B$ by $\nb\sd_B=\nb(\sd_B\wg N)$ on $\{\sd_B\le N\}$ for $N>0$ and $\nb\sd_B=0$ on $\{\sd_B=\infty\}$. This is well-defined, by Proposition~\ref{prop:D}.

To introduce the perturbation terms, let $b$ and $c$ be $H$-valued measurable functions on $E$, and let $V$ be a real measurable function on $E$. 
From here, we always assume the following minimal requirement.
\begin{assumption}\label{as:1}
\begin{enumerate}[({A}.1)]
\item There exists a measurable nest $\{E_k\}_{k=1}^\infty$ such that 
$|b|_H,|c|_H\in L^2_\loc(\mu,\{E_k\})$ and $V\in L^1_\loc(\mu,\{E_k\})$.
\item There exist $\eta \in [0,1)$, $\theta\ge0$, $\om\ge0$, and $l\ge 0$ such that, for every $f,g\in \bigcup_{k=1}^\infty\D_{E_k,b}$,
\begin{equation}\label{eq:A2_1}
-\int_E  \{(b+c,\nb f)_H f+V f^2\}\,d\mu \le \eta \cE^0(f)+\theta\|f\|_2^2
\end{equation}
and
\begin{equation}\label{eq:A2_2}
\left|\int_E \bigl\{ (b,\nb f)_H g+(c,\nb g)_H f + V fg\bigr\}\,d\mu\right|
\le \om \cE^0_l(f)^{1/2}\cE^0_l(g)^{1/2}.
\end{equation}
\end{enumerate}
\end{assumption}
For $f,g\in \bigcup_{k=1}^\infty\D_{E_k,b}$, we define
\begin{equation}\label{eq:cE}
\cE(f,g)=\cE^0(f,g)+\int_E \bigl\{ (b,\nb f)_H g+(c,\nb g)_H f+ V fg\bigr\}\,d\mu.
\end{equation}
It follows that
\begin{align*}
\cE(f,f)+\theta\|f\|_2^2
&= \cE^0(f)+\int_E \bigl\{ (b,\nb f)_H f+(c,\nb f)_H f+ V f^2\bigr\}\,d\mu+\theta\|f\|_2^2\\
&\ge(1-\eta)\cE^0(f),\\
|\cE(f,g)|
&\le |\cE^0(f,g)|+\left|\int_E \bigl\{ (b,\nb f)_H g+(c,\nb g)_H f+ V fg\bigr\}\,d\mu\right|\\
&\le (1+\om)\cE^0_l(f)^{1/2}\cE^0_l(g)^{1/2},
\end{align*}
and $\bigcup_{k=1}^\infty\D_{E_k,b}$ is dense in $\D$.
Therefore, $\cE(\cdot,\cdot)$ extends continuously to a bilinear form on $\D$ and the bilinear form 
$\D\times\D\ni(f,g)\mapsto \cE_\theta(f,g):=\cE(f,g)+\theta\int_E fg\,d\mu$ 
is a coercive closed form on $L^2(\mu)$.
Thus, a strongly continuous semigroup $\{T_t\}_{t>0}$ exists on $L^2(\mu)$ and some closed operator $(\cL,\Dom(\cL))$ on $L^2(\mu)$ associated with $(\cE,\D)$ satisfies $\cE(f,g)=-\int_E (\cL f)g\,d\mu$ for $f\in \Dom(\cL)$ and $g\in\D$.
In particular, $T_t$ can be given as $e^{\theta t}T^{(\theta)}_t$, where $\{T^{(\theta)}_t\}_{t>0}$ is the semigroup associated with $(\cE_\theta,\D)$.
Formally, $\cL$ is described as $-(1/2)\nb^*\nb-(b,\nb\cdot)_H-\nb^*(c\cdot)-V\cdot$, where $\nb^*$ denotes the adjoint operator of $\nb$. 
Confirming that $(\cE_\theta,\D)$ satisfies the condition~(S) in \cite[Proposition~1.2]{MR2} by an argument similar to \cite[Proof of Theorem~2.2]{MR2} and applying \cite[Theorem~1.5]{MR2},
we see that $\{T_t\}_{t>0}$ is positivity preserving. That is, $T_t f\ge0$ $\mu$-a.e.\ if $f\ge0$ $\mu$-a.e. In general, $\{T_t\}_{t>0}$ is 
not necessarily Markovian.
Let $\hat T_t$ denote the adjoint operator of $T_t$ on $L^2(\mu)$. 
Then, $\{\hat T_t\}_{t>0}$ is also a positivity-preserving semigroup and is associated with a bilinear form $(\hat\cE,\D)$ defined by
\[
  \hat\cE(f,g)=\cE(g,f),\qquad f,g\in\D.
\]

Let $\cB_0$ denote the set of all sets $A\in\cB$ such that $0<\mu(A)<\infty$.
For $A,B\in\cB_0$, define
\[
  P_t(A,B)=\int_A T_t\bone_B\,d\mu \quad\text{for } t>0.
\]
If $\sd(A,B)=\infty$, then the situation is simple, with the proof of the following proposition given in Section~5.
\begin{prop}\label{prop:infty}
Suppose $A,B\in\cB_0$ satisfy $\sd(A,B)=\infty$.
Then, under Assumption~\ref{as:1},
$P_t(A,B)=0$ for all $t>0$. 
In particular, it follows that
\[
\lim_{t\to0}t\log P_t(A,B)= -\frac{\sd(A,B)^2}2\,(=-\infty).
\]
\end{prop}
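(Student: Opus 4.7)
My plan is to adapt the weighted Davies--Gaffney-type estimate underlying Theorem~\ref{th:upper} to the degenerate case $\sd(A,B)=\infty$. The first step uses Proposition~\ref{prop:distance}: since $\sd(A,B)=\essinf_{x\in A}\sd_{B}(x)=\infty$, the function $\sd_{B}$ equals $+\infty$ $\mu$-a.e.\ on $A$. Consequently, for every $N>0$ the truncation $w_{N}:=\sd_{B}\wg N$ lies in $\D_{0}$ with $w_{N}=0$ $\mu$-a.e.\ on $B$ (by the maximality characterization of $\sd_{B}\wg N$ in $\D_{B,N}$) and $w_{N}=N$ $\mu$-a.e.\ on $A$.

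Second, I would invoke the quantitative weighted bound implicit in the proof of Theorem~\ref{th:upper}. Specifically, choosing $w=w_{N}$ in the auxiliary function $\sg(t)=\int_{E}(e^{\alpha w}T_{t}\bone_{B})^{p(t)}\,d\mu$ with $p(t)=q-St$ (or the modified version required in the general $\sg$-finite case), the differential inequality derived in Section~3 yields, for a fixed $\alpha>0$ and sufficiently small $t>0$, a bound
\[
\int_{E}(e^{\alpha w_{N}}T_{t}\bone_{B})^{p(t)}\,d\mu\le \Psi(\alpha,t),
\]
whose right-hand side depends on $\alpha$, $t$, $\mu(B)$, and the constants in Assumption~\ref{as:1}, but \emph{not} on $N$. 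The $N$-independence rests on two observations: the initial datum $e^{\alpha w_{N}}\bone_{B}=\bone_{B}$ carries no $N$, and the coefficients in the differential inequality depend on $w_{N}$ only via $|\nb w_{N}|_{H}\le 1$.

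Third, applying H\"older's inequality together with the pointwise bound $e^{-\alpha w_{N}}\le e^{-\alpha N}$ $\mu$-a.e.\ on $A$,
\[
P_{t}(A,B)=\int_{A}e^{-\alpha w_{N}}\cdot e^{\alpha w_{N}}T_{t}\bone_{B}\,d\mu\le e^{-\alpha N}\mu(A)^{1-1/p(t)}\Psi(\alpha,t)^{1/p(t)}.
\]
Letting $N\to\infty$ with $\alpha$ and $t$ held fixed drives the right-hand side to zero, and together with positivity preservation of $\{T_{t}\}_{t>0}$ this forces $P_{t}(A,B)=0$. The stated Varadhan limit is then immediate under the convention $\log 0=-\infty$.

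The principal obstacle is the second step: extracting from the intricate proof of Theorem~\ref{th:upper} a quantitative weighted bound in which the dependence on the cutoff $w$ is entirely through $|\nb w|_{H}$, and not through $\|w\|_{\infty}$. In the general non-finite-measure setting, where $\sg$ is modified with additional truncating factors, one must audit those modifications to confirm that neither the initial data nor the logarithmic correction term $C\sg(t)\max\{0,-\log\sg(t)\}$ in the differential inequality re-introduces spurious $N$-dependence through the ambient cutoffs.
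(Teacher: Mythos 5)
There is a genuine gap: your argument proves the statement only under hypotheses that Proposition~\ref{prop:infty} does not assume. The proposition is stated under Assumption~\ref{as:1} alone, whereas the quantitative weighted bound you want to import from Section~3 is built on (B.1) and (B.2)$_{A,B}$: the differential inequality $\sg'(t)\le U\sg(t)+W\sg(t)\log^-\sg(t)$ is obtained only after estimating the terms $I_2$ and $I_3$ (the contributions of the antisymmetric part $b-c$, paired with $\nb\sd_B$ and with $\nb F^{(k)}$) by means of (B.2)$_{A,B}$ and (B.1), and even the finiteness of $\sg(t)$ for $p(t)>2$ rests on the $L^p$-extension of $\{T_t\}$ in Proposition~\ref{prop:Lp}, whose proof again uses the (B.1)-type bound \eqref{eq:upperB1''}. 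Under Assumption~\ref{as:1} only, none of this machinery is available, so the purported $N$-independent bound $\Psi(\a,t)$ cannot be produced, and the proposal does not establish the proposition as stated. (Your first and third steps are fine: $\sd_B=\infty$ $\mu$-a.e.\ on $A$, $w_N=\sd_B\wg N\in\D_0$ vanishes on $B$, and the H\"older/weight trick would indeed kill $P_t(A,B)$ if such a bound existed; also, morally this route would work under the hypotheses of Theorem~\ref{th:upper}, where it essentially reproduces the upper estimate with $\sd(A,B)$ replaced by arbitrary $N$.)

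The paper avoids all quantitative estimates and argues structurally. By \cite[Proposition~5.1]{AH}, $Y:=\{\sd_B=\infty\}$ coincides up to $\mu$-null sets with $\{T^0_t\bone_B=0\}$ for every $t>0$; from this one checks that $Y$ is an invariant set of the symmetric form $(\cE^0,\D)$ (via \cite[Lemma~1.6.1, Theorem~1.6.1]{FOT}), that $\nb(\bone_Y f)=\bone_Y\nb f$, and hence that $\cE(\bone_{E\setminus Y}f,\bone_Y g)=0$ for $f,g\in\D$. Then \cite[Lemma~3.1]{Kuw} gives weak invariance of $E\setminus Y$ for the perturbed form $(\cE,\D)$, i.e.\ $T_t(\bone_{E\setminus Y}f)=0$ $\mu$-a.e.\ on $Y$; with $f=\bone_B$ and $B\subset E\setminus Y$ this yields $T_t\bone_B=0$ $\mu$-a.e.\ on $Y$, and since $\sd(A,B)=\infty$ forces $\mu(A\setminus Y)=0$, one gets $P_t(A,B)=0$. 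This soft argument uses only Assumption~\ref{as:1}, which is exactly why the proposition can precede, and be independent of, the conditions (B.1) and (B.2)$_{A,B}$ needed for Theorem~\ref{th:upper}. To repair your proof you would either have to add those hypotheses (weakening the proposition) or replace the second step by an argument of this invariant-set type.
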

From this proposition, it is sufficient to consider the case when $\sd(A,B)<\infty$.
For this case, we need some extra assumptions to begin.
Let $\log^\pm x$ denote $0\vee(\pm\log x)$ for $x\ge0$.
\begin{theorem}[upper estimate]\label{th:upper}
Let $A,B\in\cB_0$ with $\sd(A,B)<\infty$. 
Suppose Assumption~\ref{as:1} and the following.
\begin{enumerate}[({B}.1)$_{A,B}$]
\item[\rm(B.1)] There exists $\kp>0$ such that
\begin{equation}\label{eq:upperB1}
\left|\int_E (b-c,\nb f)_H f\,d\mu\right|\le \kp\cE^0_1(f),\quad f\in\bigcup_{k=1}^\infty \D_{E_k,b}.
\end{equation}
\item[\rm(B.2)$_{A,B}$] There exist $\gm\ge0$ and nonnegative numbers $\{\lm_\eps\}_{\eps>0}$ such that 
\begin{equation}\label{eq:lmeps}
\lim_{\eps\to0}\eps \lm_\eps=0
\end{equation}
and
\begin{align}
\int_{\{0<\sd_B<\sd(A,B)\}}&(b-c,\nb\sd_B)_H f^2\,d\mu\le \eps\cE^0(f)+\lm_\eps+\gm\left(\int_E f^2\log^+ f^2\,d\mu\right)^{1/2}\notag\\
&\quad\text{for any $\eps>0$ and $f\in\bigcup_{k=1}^\infty \D_{E_k,b}$ with $\|f\|_2=1$}.
\label{eq:B2}
\end{align}
\end{enumerate}
Then,
\begin{equation}\label{eq:upper}
\varlimsup_{t\to0}t\log P_t(A,B)\le -\frac{\sd(A,B)^2}2.
\end{equation}
\end{theorem}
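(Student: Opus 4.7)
Since $\sd(A,B)<\infty$, fix $d\in(0,\sd(A,B))$ arbitrarily; it will suffice to
prove $\varlimsup_{t\to 0}t\log P_{t}(A,B)\le -d^{2}/2$ and then to let
$d\uparrow\sd(A,B)$. By Proposition~\ref{prop:distance}, the function
$w:=\sd_{B}\wg d$ belongs to $\D_{0}$, with $w=0$ on $B$, $w=d$ $\mu$-a.e.\ on
$A$, and $|\nb w|_{H}\le 1$ $\mu$-a.e.; moreover $\nb w$ is supported on
$\{0<\sd_{B}<d\}\subset\{0<\sd_{B}<\sd(A,B)\}$, which is exactly the support
allowed in~\eqref{eq:B2}. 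My plan is the Davies--Gaffney argument with a
time-dependent exponent sketched in the introduction. Using (A.2) I first show
that $\{T_{t}\}_{t>0}$ extends to a strongly continuous semigroup on
$L^{p}(\mu)$ for $p$ in a neighbourhood of $2$; then I fix $q>2$ slightly and
$S>0$ small, put $p(t)=q-St$ and $u_{t}:=T_{t}\bone_{B}$, and consider
\[
\sg(t)=\int_{E}(e^{\a w}u_{t})^{p(t)}\,d\mu
\]
for a parameter $\a>0$ eventually taken of size $d/t$. Since $\mu$ is only
$\sg$-finite, an auxiliary truncation along the nest $\{E_{k}\}$ must be
inserted to keep $\sg$ finite and differentiable; the cut-offs contribute only
negligible boundary terms and are removed at the end.

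Differentiating $\sg$ and using $\partial_{t}u_{t}=\cL u_{t}$ together with the
chain rule~\eqref{eq:chain} gives
\[
\sg'(t)=-p(t)\cE\bigl(u_{t},e^{p(t)\a w}u_{t}^{p(t)-1}\bigr)
-S\int_{E}(e^{\a w}u_{t})^{p(t)}\log(e^{\a w}u_{t})\,d\mu.
\]
Expanding $\cE=\cE^{0}+(\text{drift/potential})$ and applying~\eqref{eq:chain}
again splits the first integral into: \emph{(a)} a coercive piece from
$\cE^{0}$, essentially
$\int_{E}\bigl|\nb\bigl((e^{\a w}u_{t})^{p(t)/2}\bigr)\bigr|_{H}^{2}\,d\mu$,
together with an $\a^{2}\sg(t)$-term coming from $|\nb w|_{H}\le 1$;
\emph{(b)} symmetric drift-and-potential terms
$\int(b+c,\nb u_{t})_{H}(\cdots)+\int V(\cdots)$, absorbed
by~\eqref{eq:A2_1} into \emph{(a)}; \emph{(c)} an antisymmetric first-order
piece in $(b-c,\nb u_{t})_{H}$ controlled by~\eqref{eq:upperB1}; and
\emph{(d)} the decisive term
\[
p(t)\a\int_{E}(b-c,\nb w)_{H}(e^{\a w}u_{t})^{p(t)}\,d\mu,
\]
supported on $\{0<\sd_{B}<\sd(A,B)\}$ because of $\nb w$. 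Applying~\eqref{eq:B2}
to $f:=(e^{\a w}u_{t})^{p(t)/2}/\sg(t)^{1/2}$ recasts \emph{(d)} as
\[
p(t)\a\sg(t)\Bigl[\eps\cE^{0}(f)+\lm_{\eps}
+\gm\Bigl(\int_{E}f^{2}\log^{+}f^{2}\,d\mu\Bigr)^{1/2}\Bigr].
\]
The $\cE^{0}(f)$-summand is swallowed by \emph{(a)} provided $\a\eps$ is kept
bounded relative to the coefficient of the coercive term, which is what forces
the leading constant to become $(1+\eps)\a^{2}$; the constant summand is
$o(\a^{2})$ by~\eqref{eq:lmeps}; and the logarithmic summand is exactly what
the extra term $-S\int(e^{\a w}u_{t})^{p(t)}\log(e^{\a w}u_{t})\,d\mu$ is
designed to absorb, up to a residual of the form $C\sg(t)\log^{-}\sg(t)$.

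Altogether, after removing the cut-off, there is an inequality
\[
\sg'(t)\le (1+\eps)\a^{2}\sg(t)+C\sg(t)\log^{-}\sg(t)+(\text{l.o.t.}),
\qquad \sg(0^{+})\le\mu(B),
\]
on some interval $(0,t_{0})$. Examining the ODE satisfied by $\log\sg$ gives
$\sg(t)\le C_{1}\exp((1+\eps)\a^{2}t+\psi(t))$ with $\psi(t)/t\to 0$. Since
$w=d$ $\mu$-a.e.\ on $A$, H\"older's inequality yields
$e^{p(t)\a d}P_{t}(A,B)^{p(t)}\le\mu(A)^{p(t)-1}\sg(t)$, whence
\[
t\log P_{t}(A,B)\le -\a dt
+\frac{t(p(t)-1)}{p(t)}\log\mu(A)+\frac{t}{p(t)}\log\sg(t).
\]
Taking $\a=d/t$ and letting first $t\to 0$, then $\eps\to 0$, then
$q\downarrow 2$, and finally $d\uparrow\sd(A,B)$ produces~\eqref{eq:upper}.
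The main obstacle is step \emph{(d)}: in the symmetric case $b=c$ it is
absent, but otherwise $(b-c,\nb w)_{H}$ is not controlled by form smallness
alone, and~\eqref{eq:B2} supplies only a coercive/logarithmic bound; absorbing
the $(\int f^{2}\log^{+}f^{2})^{1/2}$ contribution is what forces the
time-dependent exponent, with its attendant $L^{p(t)}$ theory for
$\{T_{t}\}$, the nest truncation in the infinite-measure case, and a
non-linear Gr\"onwall analysis involving $\log^{-}\sg$. It is that
quantitative bookkeeping, rather than any single conceptual obstacle, that
makes the actual proof long.
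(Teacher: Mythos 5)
Your overall strategy coincides with the paper's: the $L^{p}$ extension of $\{T_t\}_{t>0}$ (Proposition~\ref{prop:Lp}), a Davies--Gaffney functional with decreasing exponent $p(t)=q-St$, a differential inequality of the form $\sg'\le(1+\eps)\a^2\sg+C\sg\log^-\sg$, and absorption of the $(\int f^2\log^+f^2)^{1/2}$ term of (B.2)$_{A,B}$ by the $-S\int F^{p(t)}\log^+F\,d\mu$ term created by $p'(t)=-S$. Where you genuinely differ is the closing step: you evolve only $F(t)=e^{\a w}T_t\bone_B$ and finish with H\"older over $A$ using $w\equiv d$ $\mu$-a.e.\ on $A$, whereas the paper writes $P_t(A,B)\le\sg_1(t/2,\a)^{1/2}\sg_2(t/2,\a)^{1/2}$, evolving in addition the adjoint semigroup $\hat T_{t/2}\bone_A$ with weight $e^{-\a w}$ and extracting the decay from the initial datum $\sg_2(0,N/t)=\mu(A)e^{-2N^2/t}$. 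Your one-sided combination is arithmetically consistent ($-d^2+(1+\eps)d^2/q\to -d^2/2$), only needs the sign $+\a\int(b-c,\nb w)_H(\cdot)\,d\mu$ that (B.2)$_{A,B}$ controls, and would spare you the adjoint semigroup and its $L^p$ theory; the price is only bookkeeping (in particular $q$ cannot be sent to $2$ independently of $\eps$, $\eta$, and $\kp$: as in the paper's choice of $q$, the coercivity budget and the $\kp(p-2)$ term from (B.1) constrain how close to $2$ you must start).

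The genuine gap is the $\sg$-finite case, which is exactly what the paper flags as the main obstacle and what all of Section~3.2 is built to overcome. With your $\sg(t)=\int_E F(t)^{p(t)}\,d\mu$, differentiating the exponent produces $-S\int_E F^{p(t)}\log F\,d\mu$; the piece $-S\int F^{p(t)}\log^+F$ is the useful one, but the piece $+S\int_E F^{p(t)}\log^-F\,d\mu$ is positive and, when $\mu(E)=\infty$, is \emph{not} dominated by $C\sg(t)$ nor by $C\sg(t)\log^-\sg(t)$: pointwise $F^{p}\log^-F\le c_pF^2$ on $\{F\le1\}$, and $\int_{\{F\le1\}}F^2\,d\mu$ admits no bound by $\int F^{p}\,d\mu$ on an infinite-measure space, while the only a priori bound $\|F\|_2\le e^{\a d}\|T_t\bone_B\|_2$ is ruinous once $\a=d/t$ (it injects $+2d^2$ into $t\log\sg$ and destroys the estimate); error terms proportional to $\|F\|_2^2$ arising from (A.2), (B.2)$_{A,B}$ and the normalization suffer from the same defect. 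Declaring that ``truncation along the nest contributes negligible boundary terms'' does not resolve this: restricting the integral to $E_k$ breaks the identity $\int\zeta(F)e^{\a w}\cL u_t\,d\mu=-\cE(u_t,\cdot)$ unless cutoffs $\chi_k\in\D$ are inserted, whose gradient terms are not negligible at $\a=d/t$, and the $\log^-$ obstruction concerns smallness of $F$, not the tail of the nest. The paper's remedy is structural: replace $x^{p(t)}$ by the spliced integrand $\tau(x,p(t))$ of \eqref{eq:tu}, which equals $x^2$ for $x\le1$ and behaves like $x^{p(t)}$ for large $x$, so that $\sg(t)\ge\|F(t)\|_2^2$ (Lemma~\ref{lem:4}(iv)) and ${\del_y\tau}\ge x^y\log^+x-(2\log2)x^2\ge0$ (Lemma~\ref{lem:6''}), making every error term comparable to $\sg$ itself. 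Without this device (or an equivalent one) your differential inequality does not close for general $\sg$-finite $\mu$; for finite $\mu$ your sketch is essentially the simplification the paper records in the remark at the end of Section~3.
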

\begin{theorem}[lower estimate]\label{th:lower}
Let $B\in\cB_0$ and $N>0$.
Suppose Assumption~\ref{as:1} and suppose \eqref{eq:upper} of Theorem~\ref{th:upper} holds for any $A\in\cB_0$ with $\sd(A,B)<N$.
Then, 
\begin{equation}\label{eq:lower}
\varliminf_{t\to0}t\log P_t(A,B)\ge -\frac{\sd(A,B)^2}2
\quad \text{for any $A\in\cB_0$ with $\sd(A,B)<N$.}
\end{equation}
Accordingly, 
\begin{equation}\label{eq:limit}
\lim_{t\to0}t\log P_t(A,B)= -\frac{\sd(A,B)^2}2
\end{equation}
for such $A\in\cB_0$.
\end{theorem}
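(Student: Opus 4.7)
Following the formal sketch in the introduction and the strategy of \cite{R,HR,AH}, set $v_{t}=T_{t}\bone_{B}$ and, on $\{v_{t}>0\}$, $u_{t}=-t\log v_{t}$. Jensen's inequality applied to $P_{t}(A,B)=\int_{A}v_{t}\,d\mu$ yields
\[
-t\log\frac{P_{t}(A,B)}{\mu(A)}\le \frac{1}{\mu(A)}\int_{A}u_{t}\,d\mu,
\]
so by Proposition~\ref{prop:distance} and the identity $\sd(A,B)=\essinf_{x\in A}\sd_{B}(x)$, the lower bound \eqref{eq:lower} will follow once $\varlimsup_{t\to 0}u_{t}\le\sd_{B}^{2}/2$ is proved in an appropriate $L^{1}_{\loc}$ sense on $\{\sd_{B}<N\}$.

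The analytic engine is the pointwise identity
\[
t\Bigl(\del_{t}u_{t}-\tfrac{1}{2}\cL^{0}u_{t}\Bigr)=u_{t}-\tfrac{1}{2}|\nb u_{t}|_{H}^{2}+R_{t}(b,c,V),
\]
obtained by differentiating $v_{t}=e^{-u_{t}/t}$, using $\del_{t}v_{t}=\cL v_{t}$, and applying the chain rule \eqref{eq:chain}; here $R_{t}(b,c,V)$ gathers the contributions from the lower-order perturbations. To give this identity rigorous meaning I would proceed as in \cite{AH} with the modifications outlined in the introduction: introduce a bounded $C^{1}$ truncation $\phi:\R\to[0,N']$ with $\phi(x)=x$ on a neighborhood of $[0,N^{2}/2]$ and $N'>N^{2}/2$; the time average $\bar{\phi}_{t}=t^{-1}\int_{0}^{t}\phi(-s\log v_{s})\,ds$, which absorbs the troublesome $\del_{t}u_{t}$ factor after integration in $t$; and bump functions $\chi_{k}\in\D_{E_{k},b,+}$ from the measurable nest, chosen slightly differently from \cite{AH,HR} because $v_{t}$ may exceed $1$ and $u_{t}$ may be negative, since $\{T_{t}\}_{t>0}$ is only positivity-preserving. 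The hypothesis that \eqref{eq:upper} holds for every $A\in\cB_{0}$ with $\sd(A,B)<N$ is then used to show that $\bar{\phi}_{t}\chi_{k}$ is a legitimate $\D$-valued test function for $\cE$ and that $\phi$ acts as the identity on the relevant region as $t\to 0$.

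Testing the displayed identity against $\bar{\phi}_{t}\chi_{k}$ and using $\cE(f,g)=-\int_{E}(\cL f)g\,d\mu$ together with Assumption~\ref{as:1} to control $R_{t}$, I would pass to the limit $t\to 0$ along a subsequence to extract a nonnegative limit $u_{0}\in\D_{\loc,b}(\{E_{k}\})$ satisfying
\[
\tfrac{1}{2}|\nb u_{0}|_{H}^{2}\le u_{0}\quad\mu\text{-a.e.\ on each }E_{k},\qquad u_{0}=0\quad\mu\text{-a.e.\ on }B.
\]
Equivalently $|\nb\sqrt{2u_{0}}|_{H}\le 1$ $\mu$-a.e., so $\sqrt{2u_{0}}\wg N\in\D_{B,N}$, and Proposition~\ref{prop:distance} yields $\sqrt{2u_{0}}\wg N\le\sd_{B}\wg N$, i.e., $u_{0}\le\sd_{B}^{2}/2$ on $\{\sd_{B}<N\}$. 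Combined with the Jensen estimate above this gives \eqref{eq:lower}, and \eqref{eq:limit} follows from it together with \eqref{eq:upper}. The hard part will be showing that $R_{t}$ is negligible after averaging and that all integrations are legitimate: the terms involving $b,c,V$ and the logarithmic factor $\log v_{s}$ must be bounded quantitatively via \eqref{eq:A2_1}--\eqref{eq:A2_2} and the modified cut-off scheme, and the interchange of limits $t\to 0$ and $k\to\infty$ requires careful monotone and dominated convergence arguments. These complications, rather than any conceptually new step beyond \cite{AH}, account for most of the length of the actual proof.
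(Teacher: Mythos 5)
Your overall skeleton (truncation $\phi$, time averages $\bar\phi_t$, bump functions $\chi_k$, a limiting gradient inequality, then Jensen-type convexity over $A$) is indeed the route the paper takes, but two steps in your plan do not work as stated, and they are precisely the two places where the real work lies. First, the limiting PDE argument does \emph{not} directly produce $\tfrac12|\nb u_0|_H^2\le u_0$. When you test the identity \eqref{identitypde} with $\bar\phi_t\chi_k$-type functions and integrate by parts in $t$, the terms $-(h,\Phi_{T})_\mu+(h,\bar\Phi_{T})_\mu$ and $(h,\bar\Psi_{T})_\mu$ survive, and with only $\Phi_0\ge0$ at hand the best one gets in the limit is \eqref{eq:eqsqite}, i.e.\ $\tfrac12\int_E\Gm(\bar\phi_0)h\,d\mu\le 2(\bar\phi_0,h)_\mu$, equivalently $|\nb\bar\phi_0|_H^2\le 4\bar\phi_0$, hence only $\sqrt{\bar\phi_0}\in\D_0$ and $\bar\phi_0\le\sd_B^2$ (Lemma~\ref{lem:sqlem}), which would give the exponent $-\sd(A,B)^2$ rather than $-\sd(A,B)^2/2$. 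The sharp constant is recovered only through the self-improvement scheme of Lemma~\ref{lem:itelem}: the assumed upper estimate \eqref{eq:upper}, via the convexity Lemma~\ref{lem:convexlem}, yields $\Phi_0\ge\b^{-1}\bar\Psi_0$ on $\{\sd_B<N\}$, which feeds back into \eqref{eq:eqsqite} and lets one iterate $\b\mapsto 2-\b^{-1}\downarrow 1$. In your proposal the hypothesis that \eqref{eq:upper} holds for all $A$ with $\sd(A,B)<N$ is assigned only the cosmetic role of legitimizing the test functions; in fact it is the essential input for the constant $1/2$, and without explaining how it enters your inequality $\tfrac12|\nb u_0|_H^2\le u_0$ is unproved.

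Second, even granting $\bar\phi_0\le\sd_B^2/2$, your Jensen step needs an upper bound on $\int_{A}\phi(u_t)\,d\mu$ (equivalently on $(\Phi_t,\bone_{A})_\mu$) for \emph{every} small $t$, whereas the compactness argument only controls weak limits of the Ces\`aro averages $\bar\phi_{t}$, $\bar\Phi_t$ along subsequences; an upper bound on time averages does not control $\varlimsup_{t\to0}\int_A\Phi_t\,d\mu$, and \eqref{eq:lower} is a statement about the liminf along all $t\to0$. The paper closes this gap by first showing that the limit is independent of the subsequence, namely $\bar\Phi_0=\Phi(\sd_B^2/2)$ on $\{\sd_B<N\}$ (using Lemma~\ref{lem:HR2.9}, again based on \eqref{eq:upper}), and then invoking Wiener's Tauberian theorem through Lemma~\ref{lem:wiener2}, where the one-sided Lipschitz estimate for $r\mapsto(T^0_{\tau-r}\bone_Y,\Phi^\dl_r-\Phi(e_r^\dl))_{\hat\chi_k^2\cdot\mu}$ is what upgrades Ces\`aro convergence to genuine convergence of $(\Phi_t,\bone_Y)_{\hat\chi_k^2\cdot\mu}$, see \eqref{eq:ast}--\eqref{eq:ast2}. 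Your plan contains no substitute for this Tauberian step, so the passage from the averaged limit $\bar\phi_0$ to the bound on $-t\log P_t(A,B)$ is a genuine gap rather than a routine dominated-convergence matter.
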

In particular, we have the following theorem. 
\begin{theorem}
Suppose Assumption~\ref{as:1},  {\rm(B.1)} in Theorem~\ref{th:upper}, and the following.
\begin{enumerate}[(B.2$'$)]
\item There exist $\gm\ge0$ and nonnegative numbers $\{\lm_\eps\}_{\eps>0}$ such that $\lim_{\eps\to0}\eps \lm_\eps=0$ and
\[
\int_E |b-c|_H f^2\,d\mu\le \eps\cE^0(f)+\lm_\eps+\gm\left(\int_E f^2\log^+ f^2\,d\mu\right)^{1/2}
\]
for any $\eps>0$ and $f\in\bigcup_{k=1}^\infty \D_{E_k,b}$ with $\|f\|_2=1$.\end{enumerate}
Then, \eqref{eq:limit} holds for any $A,B\in\cB_0$.
\end{theorem}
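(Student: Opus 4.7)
The plan is to deduce this theorem from Theorems~\ref{th:upper} and~\ref{th:lower} together with Proposition~\ref{prop:infty}; all the real work is in checking that (B.2$'$) implies the pair-specific condition (B.2)$_{A,B}$ for every $A,B\in\cB_0$ with $\sd(A,B)<\infty$. Since (B.1) and Assumption~\ref{as:1} are independent of $A,B$, once this implication is verified the conclusion \eqref{eq:limit} follows by applying Theorem~\ref{th:upper} to obtain \eqref{eq:upper} for arbitrary such pairs, and then feeding this into Theorem~\ref{th:lower}.

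First I would dispose of the case $\sd(A,B)=\infty$ immediately via Proposition~\ref{prop:infty}, which gives $P_t(A,B)=0$ for all $t>0$ and hence both sides of \eqref{eq:limit} equal $-\infty$. For the remaining case $\sd(A,B)<\infty$, the crucial pointwise input is
\[
|\nb\sd_B|_H\le 1\quad \mu\text{-a.e.\ on }\{\sd_B<\infty\}.
\]
This is essentially built into the construction: by Proposition~\ref{prop:distance}, for each $N>0$, $\sd_B\wg N\in\D_{B,N}\subset\D_0$, so $|\nb(\sd_B\wg N)|_H\le 1$ $\mu$-a.e.\ by the definition of $\D_0$; since $\nb\sd_B$ is defined to equal $\nb(\sd_B\wg N)$ on $\{\sd_B\le N\}$, the bound transfers.

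By the Cauchy--Schwarz inequality in $H$, this yields $(b-c,\nb\sd_B)_H f^2\le |b-c|_H f^2$ $\mu$-a.e., so the integrand on the left-hand side of \eqref{eq:B2} is dominated by $|b-c|_H f^2$, and enlarging the domain of integration from $\{0<\sd_B<\sd(A,B)\}$ to $E$ can only increase the right-hand side. Applying (B.2$'$) to this majorant delivers (B.2)$_{A,B}$ with the same $\gm$ and $\{\lm_\eps\}_{\eps>0}$, and in particular with the same validity of $\lim_{\eps\to0}\eps\lm_\eps=0$. All hypotheses of Theorem~\ref{th:upper} are therefore in force, so \eqref{eq:upper} holds for every $A'\in\cB_0$ with $\sd(A',B)<\infty$. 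Fixing the given $A$, choosing $N>\sd(A,B)$ arbitrary, and applying Theorem~\ref{th:lower} then gives \eqref{eq:lower}; combining the two inequalities produces \eqref{eq:limit}. There is no substantive obstacle here: the theorem is a packaging statement, and the only small point to verify is the contractivity bound $|\nb\sd_B|_H\le 1$, which is immediate from the definitions together with Proposition~\ref{prop:distance}.
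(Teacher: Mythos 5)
Your proposal is correct and follows essentially the same route as the paper: the bound $|\nb\sd_B|_H\le1$ $\mu$-a.e.\ (from Proposition~\ref{prop:distance} and the definition of $\D_0$) together with Cauchy--Schwarz shows that (B.2$'$) implies (B.2)$_{A,B}$, after which the result is assembled from Proposition~\ref{prop:infty}, Theorem~\ref{th:upper}, and Theorem~\ref{th:lower}. Your write-up merely spells out these steps in more detail than the paper's two-line proof.
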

\begin{proof}
Since $|\nb \sd_{B}|_H \le 1$ $\mu$-a.e., 
(B.2$'$) implies (B.2)$_{A,B}$ for all $A,B\in\cB_0$ with $\sd(A,B)<\infty$.
The claim follows from Proposition~\ref{prop:infty}, Theorem~\ref{th:upper}, and Theorem~\ref{th:lower}. 
\end{proof}

\begin{rem}\label{rem:2}
\begin{enumerate}
\item Assumption~\ref{as:1}, (B.1), and (B.2$'$) are symmetric with respect to $b$ and $c$, while (B.2)$_{A,B}$ is not.
\item The integral in \eqref{eq:upperB1} is formally rewritten as $\int_E (D^*(b-c)/2)f^2\,d\mu$, which might be easier to understand.
\item We remark that we need only \eqref{eq:upper}, rather than (B.1) and (B.2)$_{A,B}$, for Theorem~\ref{th:lower}.
\end{enumerate}
\end{rem}
A sufficient condition for (B.2$'$) is given as follows.
\begin{prop}\label{prop:B2}
Suppose that $b$ and $c$ are decomposed into $b=b_1+b_2$ and $c=c_1+c_2$ such that  $b_1,b_2,c_1,c_2$ are measurable  
and the following hold.
\begin{enumerate}
\item There exist nonnegative numbers $\{\hat\lm_\eps\}_{\eps>0}$ such that $\lim_{\eps\to0}\eps \hat\lm_\eps=0$ and 
\begin{equation}\label{eq:upperB1'}
\int_E |b_1-c_1|_H f^2\,d\mu\le \eps\cE^0(f)+\hat\lm_\eps\|f\|_2^2,\quad \eps>0,\ f\in\bigcup_{k=1}^\infty \D_{E_k,b}.
\end{equation}
\item There exists $\dl>0$ such that $\exp({\dl|b_2-c_2|_H^2})-1\in L^1(\mu)$.
\end{enumerate}
Then, {\rm (B.2$'$)} holds.
\end{prop}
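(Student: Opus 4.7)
The plan is to split $|b-c|_{H}\le|b_{1}-c_{1}|_{H}+|b_{2}-c_{2}|_{H}$ and bound the two contributions to $\int_{E}|b-c|_{H}f^{2}\,d\mu$ separately. For the first contribution, hypothesis~(i) applied to $f$ with $\|f\|_{2}=1$ immediately yields $\int_{E}|b_{1}-c_{1}|_{H}f^{2}\,d\mu\le\eps\cE^{0}(f)+\hat{\lm}_{\eps}$, which already has the form required by (B.2$'$) (with no contribution to the $\gm$-term). So the genuine work lies in controlling the second contribution using only the weak exponential integrability in~(ii).

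For $\int_{E}|b_{2}-c_{2}|_{H}f^{2}\,d\mu$, I would first apply the Cauchy--Schwarz inequality, using $\|f\|_{2}=1$, to reduce matters to bounding the squared integrand
\[
  \int_{E}|b_{2}-c_{2}|_{H}f^{2}\,d\mu\le\left(\int_{E}hf^{2}\,d\mu\right)^{1/2},\qquad h:=|b_{2}-c_{2}|_{H}^{2},
\]
so that hypothesis~(ii) applies directly to $h$. The central analytic input is then the pointwise Young inequality $xy\le y\log^{+}y+e^{x}$ for $x,y\ge0$, which follows from the Legendre pair $y\log y-y+1$ vs.\ $e^{x}-1$ together with $y\log y\le y\log^{+}y$. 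Applied with $y=f^{2}$ and $x=\a h$ for some $\a>0$, this would formally give $\a\int hf^{2}\,d\mu\le\int f^{2}\log^{+}f^{2}\,d\mu+\int e^{\a h}\,d\mu$. The main obstacle is that, under the $\sg$-finiteness of $\mu$, the last integral is generically infinite: (ii) only furnishes $e^{\dl h}-1\in L^{1}(\mu)$, not $e^{\dl h}\in L^{1}(\mu)$.

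To overcome this I would truncate $h$ at some fixed level $M>0$. On $\{h\le M\}$ the crude bound $\int_{\{h\le M\}}hf^{2}\,d\mu\le M\|f\|_{2}^{2}=M$ is trivial. On $\{h>M\}$, Chebyshev applied to $e^{\dl h}-1$ gives $\mu(\{h>M\})\le(e^{\dl M}-1)^{-1}\|e^{\dl h}-1\|_{L^{1}(\mu)}<\infty$, so for any $\a\in(0,\dl]$,
\[
  \int_{\{h>M\}}e^{\a h}\,d\mu\le\mu(\{h>M\})+\|e^{\dl h}-1\|_{L^{1}(\mu)}=:K,
\]
using $e^{\a h}\le 1+(e^{\dl h}-1)$ when $\a\le\dl$. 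Applying the Young inequality pointwise only on $\{h>M\}$ and adding the two pieces yields $\int_{E}hf^{2}\,d\mu\le M+\a^{-1}\int_{E}f^{2}\log^{+}f^{2}\,d\mu+\a^{-1}K$. Taking square roots via $\sqrt{u+v}\le\sqrt{u}+\sqrt{v}$ and combining with the bound from the first paragraph produces (B.2$'$) with the explicit choices $\gm:=\a^{-1/2}$ and $\lm_{\eps}:=\hat{\lm}_{\eps}+\sqrt{M+\a^{-1}K}$, where $M$ and $\a\in(0,\dl]$ are fixed once and for all. The condition $\eps\lm_{\eps}\to0$ then follows from $\eps\hat{\lm}_{\eps}\to0$ in~(i) together with $\eps$ times a fixed constant tending to $0$.
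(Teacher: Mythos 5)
Your argument is correct, and structurally it is the same proof as the paper's: split $b-c$ into the $(b_1-c_1)$ and $(b_2-c_2)$ parts, absorb the first via hypothesis (i), reduce the second to $\int_E|b_2-c_2|_H^2f^2\,d\mu$ by Cauchy--Schwarz with $\|f\|_2=1$, and control that by a Fenchel--Young inequality against $\int_E f^2\log^+f^2\,d\mu$ plus an exponential term. The one place you diverge is in handling the obstacle you correctly identify, namely that $e^{\alpha|b_2-c_2|_H^2}$ need not be integrable on a $\sigma$-finite space: your truncation at level $M$ combined with the Chebyshev bound on $\mu(\{|b_2-c_2|_H^2>M\})$ is valid, but it is unnecessary. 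The paper uses the slightly sharper Young-type inequality $xy\le\int_0^x\log^+s\,ds+\int_0^ye^t\,dt\le x\log^+x+e^y-1$, applied with $x=f^2$ and $y=\delta|b_2-c_2|_H^2$, in which the ``$-1$'' is already built in; the exponential term is then exactly $\int_E\bigl(e^{\delta|b_2-c_2|_H^2}-1\bigr)\,d\mu$, finite by hypothesis (ii), and one concludes directly with $\gamma=\delta^{-1/2}$ and $\lambda_\eps=\hat\lambda_\eps+\delta^{-1/2}\bigl\{\int_E\bigl(e^{\delta|b_2-c_2|_H^2}-1\bigr)\,d\mu\bigr\}^{1/2}$. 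Your route reaches the same conclusion with marginally different constants ($\gamma=\alpha^{-1/2}$ for any $\alpha\le\delta$), so nothing is lost; it is simply a longer path, and it is worth remembering the $e^y-1$ form of the inequality for exactly this kind of $\sigma$-finite setting.
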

The proof is based on a simple application of a type of Hausdorff--Young inequality. 
We provide the proof in Section~5 together with a discussion of other sufficient conditions.

\section{Proof of Theorem~\protect{\ref{th:upper}}}
\subsection{$L^p$ property of semigroups}
The following proposition is interesting in its own right, as well as being used in the proof of Theorem~\ref{th:upper}.
Although claims of the kind made by the proposition have been studied in many papers (e.g., \cite{L,SoV,FK} and the references therein), we give a proof for completeness since our framework is slightly different from that used in other proofs.
\begin{prop}\label{prop:Lp}
Suppose Assumption~\ref{as:1} and the following.
There exists $\kp>0$ such that
\begin{equation}\label{eq:upperB1''}
\int_E (b-c,\nb f)_H f\,d\mu\le \kp\cE^0_1(f),\quad f\in\bigcup_{k=1}^\infty \D_{E_k,b}.
\end{equation}
Then, by letting
\begin{equation}\label{eq:q0}
q_0=\frac{\kp+2+\sqrt{\kp^2+4(1-\eta)}}{\kp+\eta}\,(>2)\quad\text{and}\quad
 q'_0=\frac{q_0}{q_0-1},
\end{equation}
$\{T_t|_{L^2(\mu)\cap L^p(\mu)}\}_{t>0}$ (resp.,\ $\{\hat T_t|_{L^2(\mu)\cap L^p(\mu)}\}_{t>0}$) extends to a strongly continuous semigroup on $L^p(\mu)$ for all $p\in[2,q_0]$ (resp.,\ $p\in[q'_0,2]$).
Moreover, the operator norm of the semigroup on $L^p(\mu)$ at $t>0$ is dominated by $\exp\left\{t(\theta+\kp|1-2/p|)\right\}$.
\end{prop}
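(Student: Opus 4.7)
My plan is to derive the differential inequality
\[
\phi'(t)\le p\bigl(\theta+\kp|1-2/p|\bigr)\phi(t),\qquad \phi(t):=\|T_tf\|_p^{p},
\]
on a dense subset of $L^{2}(\mu)\cap L^{p}(\mu)$, then conclude by Gronwall's inequality, density, and a standard semigroup argument for strong continuity. Since $\{T_t\}_{t>0}$ is positivity preserving and hence $|T_tf|\le T_t|f|$, it suffices to take nonnegative $f\in\Dom(\cL)\cap L^{\infty}(\mu)$ supported inside some $E_k$; then $u_t:=T_tf\ge0$ lies in $\Dom(\cL)$ for all $t>0$, and formally $\phi'(t)=-p\,\cE(u_t,u_t^{p-1})$. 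The adjoint case $p\in[q_0',2]$ runs in parallel on $(\hat\cE,\D)$ using $\hat\cE(u_t,u_t^{p-1})=\cE(u_t^{p-1},u_t)$; the algebra is identical with $b$ and $c$ interchanged, and the sign condition needed for \eqref{eq:upperB1''} again holds on the prescribed interval.

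The core computation is to express $\cE(u_t,u_t^{p-1})$ in terms of $g:=u_t^{p/2}$. The chain-rule identities $u_t^{p-1}Du_t=(2/p)\,g\,Dg$ and $u_t^{p-2}|Du_t|_H^{2}=(4/p^{2})|Dg|_H^{2}$, together with the algebraic decomposition
\[
\frac{2}{p}b+\frac{2(p-1)}{p}c=(b+c)+\frac{2-p}{p}(b-c),
\]
give
\begin{align*}
\cE(u_t,u_t^{p-1})&=\frac{4(p-1)}{p^{2}}\cE^{0}(g)+\int_{E}\bigl\{(b+c,Dg)_{H}g+Vg^{2}\bigr\}\,d\mu\\
&\quad+\frac{2-p}{p}\int_{E}(b-c,Dg)_{H}g\,d\mu.
\end{align*}
Applying \eqref{eq:A2_1} to $g$ bounds the middle term from below by $-\eta\,\cE^{0}(g)-\theta\|g\|_2^{2}$; since $(2-p)/p\le0$ for $p\ge2$, applying \eqref{eq:upperB1''} to $g$ bounds the last term from below by $-((p-2)/p)\kp\bigl(\cE^{0}(g)+\|g\|_2^{2}\bigr)$. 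Collecting yields
\[
\cE(u_t,u_t^{p-1})\ge A(p)\,\cE^{0}(g)-\Bigl(\theta+\frac{(p-2)\kp}{p}\Bigr)\|g\|_2^{2},\qquad A(p):=\frac{4(p-1)}{p^{2}}-\eta-\frac{(p-2)\kp}{p}.
\]
The inequality $A(p)\ge0$ rearranges to $(\kp+\eta)p^{2}-(2\kp+4)p+4\le0$; this quadratic takes the value $4(\eta-1)<0$ at $p=2$, and its larger root equals exactly $q_0$ from \eqref{eq:q0}, so $A(p)\ge0$ on $[2,q_0]$. Using $\|g\|_2^{2}=\phi(t)$ and $(p-2)/p=|1-2/p|$ for $p\ge2$ gives the required differential inequality, and Gronwall yields $\|T_tf\|_p\le\|f\|_p\,e^{(\theta+\kp|1-2/p|)t}$.

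The principal technical obstacle will be making the chain-rule computation rigorous, because \eqref{eq:chain} requires $F\in C^{1}$ with \emph{bounded} derivative and $F(0)=0$, whereas $x\mapsto x^{p-1}$ and $x\mapsto x^{p/2}$ have unbounded derivative at infinity (and, for $p<2$, at the origin). I will proceed by approximation, for example via $F_{\eps,M}(x)=\bigl((x\wedge M)+\eps\bigr)^{p-1}-\eps^{p-1}$ (suitably smoothed to lie in $C^{1}$), establishing the computation first for $F_{\eps,M}(u_t)$ in place of $u_t^{p-1}$ and then passing to the limit $\eps\downarrow 0$, $M\uparrow\infty$ by monotone/dominated convergence, using the \emph{a priori} bound itself to control the tails. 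Once the estimate holds on the dense subset, extension to all of $L^{p}(\mu)$ for $p\in[2,q_0]$ (respectively $p\in[q_0',2]$) follows from density of $L^{2}(\mu)\cap L^{p}(\mu)$ and the uniform operator norm; strong continuity on $L^{p}(\mu)$ then follows from strong continuity on $L^{2}(\mu)$, the uniform norm bound, and density of a suitable core such as $\Dom(\cL)\cap L^{\infty}(\mu)\cap L^{p}(\mu)$.
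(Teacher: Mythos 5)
Your strategy is essentially the paper's own: differentiate an $L^p$-type functional of $T_tf$, split the drift into $b+c$ and $b-c$, control the symmetric part with \eqref{eq:A2_1} and the antisymmetric part with the one-sided bound \eqref{eq:upperB1''}, and observe that nonnegativity of the resulting coefficient of $\cE^0(g)$, $g=u_t^{p/2}$, is exactly the quadratic condition $(\kp+\eta)p^2-(2\kp+4)p+4\le0$, whose larger root is the $q_0$ of \eqref{eq:q0}; the growth constant $\theta+\kp|1-2/p|$, the Gronwall step, the extension by density, and the use of weak continuity (Voigt) for strong continuity all coincide with the paper. Your direct treatment of $\hat T_t$ via $(\hat\cE,\D)$ on $[q_0',2]$ is also correct and is equivalent to the paper's one-line duality reduction; the sign bookkeeping you indicate does work out on that interval.

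The genuine gap is in the regularization plan, and it is not only the unbounded derivative of $x\mapsto x^{p-1}$. The hypotheses \eqref{eq:A2_1} and \eqref{eq:upperB1''} are assumed only for $f\in\bigcup_{k}\D_{E_k,b}$, i.e.\ bounded functions vanishing off some $E_k$. Even if you start from $f$ bounded and supported in $E_k$, the function $u_t=T_tf$, hence $g=u_t^{p/2}$ or your truncation $F_{\eps,M}(u_t)$, is not supported in any $E_k$, so you may not apply \eqref{eq:A2_1} or \eqref{eq:upperB1''} to it directly; and extending these bounds termwise by density is not automatic, because $|b|_H$, $|c|_H$ are only locally square-integrable, so the individual pairings $\int_E(b\pm c,\nb g)_H g\,d\mu$ are not guaranteed by the assumptions to be continuous on $\D$ (only the assembled form $\cE$ is, via \eqref{eq:A2_2}). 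The paper's proof is organized precisely to avoid this: it first proves, for $v\in\bigcup_k\D_{E_k,b,+}$ (whose compositions with the truncated auxiliary functions $\hat\Lambda,\tilde\Lambda,\check\Lambda$ remain in the same class, since these vanish at $0$ and have bounded derivatives), the single inequality $-C_p\int_E\Lambda(v)\,d\mu-\cE(v,\Lambda'(v))\le0$, in which only the continuously extended form $\cE$ appears; it then extends this assembled inequality to all nonnegative $v\in\D$ by approximation, and only afterwards substitutes $v=T_tf$ and integrates in $t$, letting $M\to\infty$ at the end. To make your argument rigorous, reorganize it in that order—derive the combined inequality on the dense class with truncated powers, extend it by continuity of $\cE$, then plug in $T_tf$—rather than applying the hypotheses to powers of $T_tf$ and passing to limits afterwards.
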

\begin{proof}
It suffices to consider $\{T_t\}_{t>0}$; the claim for $\{\hat T_t\}_{t>0}$ follows by considering the adjoint semigroup of $\{T_t\}_{t>0}$.

Let $p\ge2$ and $M\ge1$. 
Define a $C^2$-function $\Lambda$ on $[0,\infty)$ so that $\Lambda(0)=\Lambda'(0)=0$ and $\Lambda''(x)=p(p-1)(x\wg M)^{p-2}$ for $x\ge0$.
Also, define the following functions on $[0,\infty)$:
\[
\hat\Lambda(x)=\int_0^x\sqrt{\Lambda''(s)}\,ds,\quad
\tilde\Lambda(x)=\sqrt{\Lambda'(x)x},\quad\text{and}\quad
\check\Lambda(x)=\sqrt{\Lambda'(x)x-2\Lambda(x)}.
\]
Then, by long but straightforward calculation, we can confirm the following inequalities:
\begin{equation}\label{eq:Lp1}
\begin{aligned}
&\Lambda'(x)\ge\frac{p}{2(p-1)}\hat\Lambda(x)\hat\Lambda'(x),\quad
0\le \tilde\Lambda'(x)\le\frac{p}{2\sqrt{p-1}}\hat\Lambda'(x),\\
&0\le \check\Lambda'(x)\le\sqrt{\frac{p(p-2)}{4(p-1)}}\hat\Lambda'(x).
\end{aligned}
\end{equation}
Since $\hat\Lambda(0)=\tilde\Lambda(0)=\check\Lambda(0)=0$, this implies, in particular, 
\begin{equation}\label{eq:Lp2}
\Lambda(x)\ge\frac{p}{4(p-1)}\hat\Lambda(x)^2,\quad
0\le\tilde\Lambda(x)\le\frac{p}{2\sqrt{p-1}}\hat\Lambda(x),\quad\text{and}\quad
0\le\check\Lambda(x)\le\sqrt{\frac{p(p-2)}{4(p-1)}}\hat\Lambda(x).
\end{equation}
Let $C\ge0$ and $t>0$.
Take $f\in L^2(\mu)\cap L^p(\mu)$ with $f\ge0$ $\mu$-a.e.
Since $T_t f\ge0$ $\mu$-a.e.\ and 
$\Lambda''$ is bounded,
we have that $\Lambda'(T_t f)\in L^2(\mu)$, $\Lambda(T_t f)\in L^1(\mu)$ and
\begin{align*}
\frac{d}{dt}\int_E e^{-Ct}\Lambda(T_t f)\,d\mu
&=\int_E\left(-C e^{-Ct}\Lambda(T_t f)+e^{-Ct}\Lambda'(T_t f)\cL T_t f\right)d\mu\\
&=e^{-Ct}\left(-C\int_E \Lambda(T_t f)\,d\mu -\cE(T_t f,\Lambda'(T_t f))\right).
\end{align*}
Moreover, for $v\in\bigcup_{k=1}^\infty\D_{E_k,b,+}$,
\begin{align*}
\cE(v,\Lambda'(v))
&=\frac12\int_E \bigl(\nb v,\nb(\Lambda'(v))\bigr)_H\,d\mu
+\int_E (b,\nb v)_H \Lambda'(v)\,d\mu\\
&\qad+\int_E \bigl(c,\nb(\Lambda'(v))\bigr)_H v\,d\mu
+\int_E V v\Lambda'(v)\,d\mu\\
&=\frac12\int_E \bigl|\nb(\hat\Lambda(v))\bigr|_H^2
+\int_E\bigl(b+c,\nb(\tilde\Lambda(v))\bigr)_H \tilde\Lambda(v)\,d\mu\\
&\qad-\int_E\bigl(b-c,\nb(\check\Lambda(v))\bigr)_H \check\Lambda(v)\,d\mu
+\int_E V\tilde\Lambda(v)^2\,d\mu\\
&\ge \cE^0(\hat\Lambda(v))
-\eta\cE^0(\tilde\Lambda(v))-\theta\|\tilde\Lambda(v)\|_2^2
-\kp\cE^0_1(\check\Lambda(v)).
\quad\text{(from (A.2) and \eqref{eq:upperB1''})}
\end{align*}
By using \eqref{eq:Lp1}, \eqref{eq:Lp2}, and the derivation property 
\eqref{eq:chain}
of $\nb$, we have
\begin{align}
-C\int_E\Lambda(v)\,d\mu-\cE(v,\Lambda'(v))
&\le -\frac{Cp}{4(p-1)}\|\hat\Lambda(v)\|_2^2
-\cE^0(\hat\Lambda(v))+\frac{\eta p^2}{4(p-1)}\cE^0(\hat\Lambda(v))\notag\\*
&\qad+\frac{\theta p^2}{4(p-1)}\|\hat\Lambda(v)\|_2^2+\frac{\kp p(p-2)}{4(p-1)}\cE^0_1(\hat\Lambda(v))\notag\\
&=\frac{-4(p-1)+\eta p^2+\kp p(p-2)}{4(p-1)}\cE^0(\hat\Lambda(v))\notag\\*
&\qad+\frac{p\{-C+\theta p+\kp(p-2)\}}{4(p-1)}\|\hat\Lambda(v)\|_2^2.
\label{eq:Lp3}
\end{align}
Therefore, if $
-4(p-1)+\eta p^2+\kp p(p-2)\le0$---or, more specifically, if $p\in[2,q_0]$ with $q_0$ given by \eqref{eq:q0}---then
the right-hand side of \eqref{eq:Lp3} is non-positive by letting 
$C=C_p:=\theta p+\kp(p-2)$.
Thus, the inequality 
\[
-C_p\int_E\Lambda(v)\,d\mu-\cE(v,\Lambda'(v))\le 0
\]
is valid for all $v\in \D$ with $v\ge0$ $\mu$-a.e.,
by approximating $v$ by elements of $\bigcup_{k=1}^\infty\D_{E_k,b,+}$.
In particular,
\[
\frac{d}{dt}\int_E e^{-C_pt}\Lambda(T_t f)\,d\mu\le0,
\]
which implies 
\[
\int_E e^{-C_pt}\Lambda(T_t f)\,d\mu\le \int_E \Lambda(f)\,d\mu,\quad t>0.
\]
Letting $M\to\infty$, we obtain that $\int_E e^{-C_pt}(T_t f)^p\,d\mu\le \int_E f^p\,d\mu$, from the monotone convergence theorem.
Thus, $\{T_t|_{L^2(\mu)\cap L^p(\mu)}\}_{t>0}$ extends to a semigroup on $L^p(\mu)$ 
that satisfies
$\|T_t\|_{L^p(\mu)\to L^p(\mu)}\le e^{C_pt/p}$.
The strong continuity of the semigroup follows from the result given in \cite{Vo}. (Indeed, it is easy to see that $\{T_t\}_{t>0}$ is a weakly continuous semigroup on $L^p(\mu)$, which implies strong continuity.)
\end{proof}
\begin{cor}\label{cor:Lp}
Suppose Assumption~\ref{as:1} and {\rm(B.1)}.
Then, $\{T_t|_{L^2(\mu)\cap L^p(\mu)}\}_{t>0}$ and $\{\hat T_t|_{L^2(\mu)\cap L^p(\mu)}\}_{t>0}$ extend to strongly continuous semigroups on $L^p(\mu)$ for all $p\in[q_0',q_0]$, where $q_0$ and $q'_0$ are as given in \eqref{eq:q0}.
Moreover, the operator norm of the semigroups on $L^p(\mu)$ at $t>0$ are dominated by $\exp\left\{t(\theta+\kp|1-2/p|)\right\}$.
\end{cor}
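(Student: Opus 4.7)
The plan is to deduce the corollary from Proposition~\ref{prop:Lp} via a short symmetry and duality argument.

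First I will note that condition (B.1), with its absolute value on the left-hand side, is strictly stronger than the one-sided hypothesis \eqref{eq:upperB1''} of Proposition~\ref{prop:Lp}: it yields simultaneously
\[
\int_E (b-c,\nb f)_H f\,d\mu\le \kp\cE^0_1(f)
\quad\text{and}\quad
\int_E (c-b,\nb f)_H f\,d\mu\le \kp\cE^0_1(f)
\]
for every $f\in\bigcup_{k=1}^\infty \D_{E_k,b}$. Next I will verify that Assumption~\ref{as:1} is invariant under the exchange $b\leftrightarrow c$: (A.1) is manifestly symmetric, \eqref{eq:A2_1} involves only the sum $b+c$, and \eqref{eq:A2_2} is preserved under the swap after relabeling the dummy variables $f$ and $g$. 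Since the dual form $(\hat\cE,\D)$, defined by $\hat\cE(f,g)=\cE(g,f)$, is precisely of the form \eqref{eq:cE} with $b$ and $c$ interchanged, it therefore satisfies Assumption~\ref{as:1} and the hypothesis of Proposition~\ref{prop:Lp} with the same constants $\eta,\theta,\om,l,\kp$.

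Applying Proposition~\ref{prop:Lp} to $(\cE,\D)$ then extends $\{T_t\}_{t>0}$ to a strongly continuous semigroup on $L^p(\mu)$ for $p\in[2,q_0]$ and $\{\hat T_t\}_{t>0}$ to one on $L^p(\mu)$ for $p\in[q_0',2]$, each with operator norm bounded by $\exp\{t(\theta+\kp|1-2/p|)\}$. Applying the same proposition to $(\hat\cE,\D)$---whose associated semigroup is $\hat T_t$ and whose dual is $T_t$---gives the mirror conclusion: $\{\hat T_t\}_{t>0}$ on $L^p(\mu)$ for $p\in[2,q_0]$ and $\{T_t\}_{t>0}$ on $L^p(\mu)$ for $p\in[q_0',2]$, with the identical bound. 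Taking the union of the two exponent ranges covers the full interval $[q_0',q_0]$ for both semigroups. The one small check to perform along the way is that for $p\in[q_0',2]$, the bound on a semigroup on $L^p(\mu)$ equals the bound on its dual on $L^{p'}(\mu)$ with $p'=p/(p-1)\in[2,q_0]$, and the identity $|1-2/p'|=|1-2/p|$ shows that the stated exponential factor is symmetric in $p$ across $2$. I expect no real obstacle here: the whole argument is a brief symmetry/duality reduction, with the substantive analytic work carried out already in Proposition~\ref{prop:Lp}.
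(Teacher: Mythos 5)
Your proposal is correct and is essentially the paper's own argument: the paper proves the corollary by the one-line remark ``apply Proposition~\ref{prop:Lp} to $(\cE,\D)$ and $(\hat\cE,\D)$,'' and your write-up simply makes explicit the checks (symmetry of Assumption~\ref{as:1} under $b\leftrightarrow c$, that (B.1) gives both one-sided bounds, and the invariance $|1-2/p'|=|1-2/p|$) that the paper leaves implicit.
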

\begin{proof}
Apply Proposition~\ref{prop:Lp} to $(\cE,\D)$ and $(\hat\cE,\D)$.
\end{proof}

\subsection{Preliminary estimates}
In this subsection, we provide several quantitative estimates used in the proof of Theorem~\ref{th:upper}.

We take a non-decreasing $C^2$-function $\xi$ on $[0,\infty)$ such that $\xi(x)=(x-1)^3\vee0$ for $x\in[0,3/2]$ and $\xi(x)=1$ for $x\in[2,\infty)$.
Define
\[
\zt(x,y)=2x+(y x^{y-1}-2x)\xi(x),\qquad x\ge0,\ y\ge2,
\]
and
\begin{equation}\label{eq:tu}
\tau(x,y)=\int_0^x \zt(s,y)\,ds,\qquad x\ge0,\ y\ge2.
\end{equation}
Then, 
\[
\del_x \zt(x,y)=2+(y(y-1)x^{y-2}-2)\xi(x)+(yx^{y-1}-2x)\xi'(x)\ge2.
\]
For $R>2$, we define the following functions on $[0,\infty)\times [2,\infty)$:
\begin{align*}
g_R(x,y)&=\sqrt{(\del_x \zt)(x\wg R,y)},&
h_R(x,y)&=\int_0^x g_R(s,y)^2\,ds,\\
\ph_R(x,y)&=\int_0^x h_R(s,y)\,ds,&
\rho_R(x,y)&=\int_0^x g_R(s,y)\,ds,\\
\iota_R(x,y)&=x h_R(x,y)-2\ph_R(x,y),&
\psi_R(x,y)&=\sqrt{x h_R(x,y)}.
\end{align*}
\begin{lemma}\label{lem:4}
For any fixed $y\ge2$, the following hold.
\begin{enumerate}
\item For $x\ge0$,
\begin{align*}
h_R(x,y)
&=\zt(x\wg R,y)+\bigl((x-R)\vee0\bigr)\del_x \zt(R,y)\\
&=\zt(x\wg R,y)+\bigl((x-R)\vee0\bigr)y(y-1)R^{y-2}.
\end{align*}
In particular,
$h_R(x,y)=\zt(x,y)$ if $0\le x \le R$.
\item $2x\le h_R(x,y)\le\zt(x,y)\le \max\{2x,y x^{y-1}\}$ 
for all $R>2$ and $x\ge 0$.
\item For $x\ge 0$, $h_R(x,y)$ converges to $\zt(x,y)$ as $R\to\infty$.
\item $x^2\le \ph_R(x,y)\le \tau(x,y)\le\max\{x^2,x^y\}$ 
for all $R>2$ and $x\ge 0$.
\item $g_R(x,y)\ge\sqrt2$ for $x\ge0$ and $g_R(x,y)\ge\sqrt{y(y-1)}(x\wg R)^{y/2-1}$ for $x\ge2$.
\end{enumerate}
\end{lemma}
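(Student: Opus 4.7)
The plan is to unwrap the definitions item by item, exploiting two facts that are immediate from the definition of $\xi$: $\xi(s)=0$ on $[0,1]$ and $\xi(s)=1$, $\xi'(s)=0$ on $[2,\infty)$, together with the already-observed bound $\del_x\zt\ge 2$. For (i), I would start from $h_R(x,y)=\int_0^x(\del_x\zt)(s\wg R,y)\,ds$. For $x\le R$ this equals $\zt(x,y)-\zt(0,y)=\zt(x,y)$ by the fundamental theorem of calculus. For $x>R$, I would split the integral at $s=R$ to obtain $\zt(R,y)+(x-R)\del_x\zt(R,y)$; since $R>2$, the displayed expression for $\del_x\zt$ collapses to $\del_x\zt(R,y)=y(y-1)R^{y-2}$, yielding the stated formula.

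For (ii), the lower bound $h_R\ge 2x$ is immediate from $\del_x\zt\ge 2$. The upper bound $h_R(x,y)\le\zt(x,y)$ is trivial when $x\le R$; for $x>R$, $\zt(x,y)-h_R(x,y)=\int_R^x(\del_s\zt(s,y)-y(y-1)R^{y-2})\,ds=\int_R^x y(y-1)(s^{y-2}-R^{y-2})\,ds\ge 0$ for $y\ge 2$. The final bound $\zt(x,y)\le\max\{2x,yx^{y-1}\}$ follows by writing $\zt(x,y)=(1-\xi(x))\cdot 2x+\xi(x)\cdot yx^{y-1}$ as a convex combination. Item (iii) is immediate from (i): for any fixed $x\ge 0$, $h_R(x,y)=\zt(x,y)$ for all $R\ge x$. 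For (v), $\del_x\zt\ge 2$ gives $g_R\ge\sqrt 2$, and when $x\ge 2$ (with $R>2$) we have $x\wg R\ge 2$, so $\xi(x\wg R)=1$, $\xi'(x\wg R)=0$, and hence $\del_x\zt(x\wg R,y)=y(y-1)(x\wg R)^{y-2}$, giving the claimed formula for $g_R$.

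For (iv), the outer bounds $\ph_R(x,y)\ge x^2$ and $\ph_R(x,y)\le\tau(x,y)$ come from integrating $2s\le h_R(s,y)\le\zt(s,y)$ from (ii). The delicate step is $\tau(x,y)\le\max\{x^2,x^y\}$. For $x\le 1$, $\xi\equiv 0$ on $[0,x]$ gives $\tau(x,y)=\int_0^x 2s\,ds=x^2$. For $x\ge 1$, I would combine the matching-mass identity $\int_0^1 \zt(s,y)\,ds=1=\int_0^1 ys^{y-1}\,ds$ with the pointwise inequality $\zt(s,y)\le ys^{y-1}$ for $s\ge 1$, which follows from $\zt(s,y)-ys^{y-1}=-(ys^{y-1}-2s)(1-\xi(s))\le 0$ (both factors being non-negative for $s\ge 1$ and $y\ge 2$). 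Then $\tau(x,y)=1+\int_1^x\zt(s,y)\,ds\le 1+(x^y-1)=x^y$. The main obstacle is precisely this sharp bound: the naive pointwise comparison $\zt(s,y)\le\max\{2s,ys^{y-1}\}$ does not suffice, since it produces a positive residual on the interval where $2s>ys^{y-1}$, and it is only by observing the exact matching of the total masses on $[0,1]$ that the telescoping yields the clean target $x^y$.
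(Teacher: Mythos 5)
Your proposal is correct, and for items (i), (ii), (iii), and (v) it is essentially the paper's argument spelled out: the paper disposes of (i) and (v) as ``straightforward from the definitions,'' obtains the middle inequality of (ii) from the monotonicity of $\del_x\zt(\cdot\wg R,y)\le\del_x\zt(\cdot,y)$ (your explicit computation of $\zt(x,y)-h_R(x,y)=\int_R^x y(y-1)(s^{y-2}-R^{y-2})\,ds$ is an equivalent route), and gets (iii) from (i). The one place where you genuinely depart from the paper is the last inequality of (iv). The paper simply says that (iv) ``follows by integrating each term of the inequality in (ii),'' which, read literally, only gives $\tau(x,y)\le\int_0^x\max\{2s,ys^{y-1}\}\,ds$; as you correctly point out, for $y>2$ this integral equals $s_0^2-s_0^y+x^y$ beyond the crossover point $s_0=(2/y)^{1/(y-2)}<1$ and therefore exceeds $\max\{x^2,x^y\}$, so the pointwise-max bound alone does not close the estimate. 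Your refinement --- using that $\zt(s,y)=2s$ exactly on $[0,1]$ (hence $\tau(x,y)=x^2$ there, with total mass $1$ at $x=1$) together with the pointwise bound $\zt(s,y)\le ys^{y-1}$ for $s\ge1$, so that $\tau(x,y)\le 1+(x^y-1)=x^y$ for $x\ge1$ --- is the argument the paper implicitly relies on but does not write down, and your version is the more careful one. So: correct proof, same elementary strategy, with a sharper and fully justified treatment of the bound $\tau(x,y)\le\max\{x^2,x^y\}$ where the paper's one-line justification is too coarse if taken at face value.
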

\begin{proof} 
(i) and (v): Straightforward from the definitions.

\noindent(ii): The first and last inequalities are easy to prove. Since
\[
\del_x h_R(x,y)
=\del_x \zt(x\wg R,y)
\le \del_x \zt(x,y),
\]
the second inequality also holds.

\noindent(iii): This follows from (i).

\noindent(iv): This follows by integrating each term of the inequality in (ii).
\end{proof}
\begin{lemma}\label{lem:pos}
For any $x\ge0$ and $y\ge2$, the following hold.
\begin{enumerate}
\item $xg_R(x,y)^2\ge h_R(x,y)$.
\item $\iota_R(x,y)\ge0$.
\item $\rho_R(x,y)\le x g_R(x,y)$.
\end{enumerate}
\end{lemma}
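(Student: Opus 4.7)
\emph{Plan.} All three expressions vanish at $x=0$, so my strategy is to differentiate in $x$ and verify the derivatives are non-negative. I would first observe that (ii) reduces immediately to (i): differentiating $\iota_R(x,y)=xh_R(x,y)-2\ph_R(x,y)$ and using $\del_x h_R(x,y)=g_R(x,y)^2$ gives
\[
\del_x\iota_R(x,y)=h_R(x,y)+xg_R(x,y)^2-2h_R(x,y)=xg_R(x,y)^2-h_R(x,y),
\]
which is exactly the left-hand side of (i). Since $\iota_R(0,y)=0$, (ii) then follows from (i).

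Next I would reduce (i) and (iii) to the single fact that $x\mapsto g_R(x,y)$ is non-decreasing. Setting $F_1(x)=xg_R(x,y)^2-h_R(x,y)$ and $F_2(x)=xg_R(x,y)-\rho_R(x,y)$, we have $F_1(0)=F_2(0)=0$ and, using $\del_x\rho_R=g_R$ and $\del_x h_R=g_R^2$,
\[
F_1'(x)=2xg_R(x,y)\,\del_xg_R(x,y),\qquad F_2'(x)=x\,\del_xg_R(x,y).
\]
Since $g_R\ge 0$, both (i) and (iii) follow once $g_R(\cdot,y)$ is known to be non-decreasing.

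The remaining task is therefore to show that $(\del_x\zt)(\cdot\wg R,y)$ is non-decreasing on $[0,\infty)$. Beyond $R$ the function is constant, so on $[0,R]$ this is equivalent to $\del_x^2\zt(\cdot,y)\ge 0$, i.e., convexity of $\zt(\cdot,y)$. Direct differentiation gives
\[
\del_x^2\zt(x,y)=y(y-1)(y-2)x^{y-3}\xi(x)+2\bigl(y(y-1)x^{y-2}-2\bigr)\xi'(x)+\bigl(yx^{y-1}-2x\bigr)\xi''(x),
\]
which I would verify is non-negative on each subinterval where $\xi$ is explicit: on $[0,1]$ all $\xi$-derivatives vanish; on $[1,3/2]$ the factors $\xi,\xi',\xi''$ and $y(y-1)x^{y-2}-2$ and $yx^{y-1}-2x$ are each non-negative for $y\ge 2$, $x\ge 1$; on $[2,\infty)$ only the manifestly non-negative first term survives.

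The main obstacle I expect is the interpolation window $[3/2,2]$, on which $\xi$ is only specified as a non-decreasing $C^2$ function. Because $\xi'$ must drop from $3/4$ to $0$ across this interval, $\xi''$ is necessarily negative somewhere, and the third term in $\del_x^2\zt$ could in principle be negative. Handling this amounts to exhibiting a specific interpolation $\xi$ on $[3/2,2]$ whose $\xi''$ is controlled enough that the non-negative first two contributions dominate. Since $\xi$ is at our disposal, this is a concrete one-variable calibration rather than a structural difficulty, but it is the step requiring the most care.
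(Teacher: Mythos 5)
Your reduction of (ii) to (i) is exactly the paper's, and your formulas $F_1'(x)=2xg_R\,\del_xg_R$, $F_2'(x)=x\,\del_xg_R$ are correct; the gap is precisely the step you defer. Making $g_R(\cdot,y)$ non-decreasing, i.e.\ $\zt(\cdot,y)$ convex on $[0,R]$, by calibrating $\xi$ on $[3/2,2]$ is not a routine one-variable adjustment: it is impossible for \emph{every} admissible $\xi$ once $y$ is close to $2$, which is exactly the range $y=p(t)\in[2,q]$ (with $q$ near $2$) in which the lemma is later used. Indeed, since $\zt(x,2)=2x$, expanding at $y=2+\eps$ gives $\del_x^2\zt(x,2+\eps)=\eps A(x)+O(\eps^2)$ with
\[
A(x)=\frac{2\xi(x)}{x}+2(3+2\log x)\,\xi'(x)+x(1+2\log x)\,\xi''(x),
\]
so convexity for all $y$ slightly above $2$ forces $A\ge0$ on $[3/2,2]$. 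Using $\xi\le1$, $x(1+2\log x)\ge\tfrac32(1+2\log\tfrac32)>2.7$ and $2(3+2\log x)\le2(3+2\log2)<8.8$, the inequality $A\ge0$ implies $\xi''\ge-\tfrac12-\tfrac{10}{3}\xi'$ on $[3/2,2]$. But $C^2$-matching forces $\xi'(3/2)=3/4$ and $\xi'(2)=0$, while the Gronwall comparison $\bigl(e^{10x/3}(\xi'+\tfrac3{20})\bigr)'\ge0$ yields $\xi'(2)\ge e^{-5/3}\bigl(\tfrac34+\tfrac3{20}\bigr)-\tfrac3{20}>0.01$, a contradiction. Hence every admissible $\xi$ has $A<0$ somewhere in $(3/2,2)$, and for $y$ near $2$ the function $g_R(\cdot,y)$ genuinely decreases there, so your premise fails and (i), (iii) do not follow by your route. (This is consistent with Lemmas~\ref{lem:5} and \ref{lem:6}, whose whole point is that $g_R$ is \emph{nearly constant} for $y$ near $2$ --- non-increasing after division by $x^\eps$ --- not increasing.)

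The lemma itself needs no monotonicity, and this is how the paper argues. For (i) one has, pointwise for $0\le x\le R$,
\[
xg_R(x,y)^2-h_R(x,y)=x\,\del_x\zt(x,y)-\zt(x,y)=y(y-2)x^{y-1}\xi(x)+(yx^y-2x^2)\,\xi'(x)\ge0,
\]
because $\xi'$ vanishes on $[0,1]$ and $yx^y\ge2x^2$ for $x\ge1$, $y\ge2$; for $x>R$ the affine formula for $h_R$ in Lemma~\ref{lem:4}(i) gives $xg_R^2-h_R=R\,\del_x\zt(R,y)-\zt(R,y)=y(y-2)R^{y-1}\ge0$. Then (ii) is obtained exactly as you propose, and (iii) follows from (i) by Cauchy--Schwarz rather than monotonicity:
\[
\rho_R(x,y)=\int_0^xg_R(s,y)\,ds\le\Bigl(x\int_0^xg_R(s,y)^2\,ds\Bigr)^{1/2}=\sqrt{xh_R(x,y)}\le xg_R(x,y).
\]
If you wish to keep a differential strategy, you must establish the nonnegativity of $F_1$ directly (which is the displayed identity) rather than through the sign of $\del_xg_R$, and derive $F_2\ge0$ from $F_1\ge0$ as above.
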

\begin{proof}
(i): For $x\in[0,R]$, 
\begin{align}\label{eq:xgR}
xg_R(x,y)^2-h_R(x,y)
&=x\del_x \zt(x,y)-\zt(x,y)\notag\\
&= y(y-2)x^{y-1}\xi(x)+(yx^y-2x^2)\xi'(x),
\end{align}
which is nonnegative since $\xi'(x)=0$ for $x\le1$.
For $x>R$, 
\begin{align}
xg_R(x,y)^2-h_R(x,y)
&=x\del_x \zt(R,y)-\left\{\zt(R,y)-(x-R)\del_x \zt(R,y)\right\}\notag\\
&=y(y-2)R^{y-1}\ge0.\label{eq:xgR2}
\end{align}

\noindent(ii): This follows from identities $\iota_R(0,y)=0$, ${\del_x}\iota_R(x,y)=xg_R(x,y)^2-h_R(x,y)$, and (i).

\noindent(iii): By the Cauchy--Schwarz inequality,
\begin{align*}
\rho_{R}(x,y)&=\int_0^x g_R(s,y)\,ds
\le\left(x\int_0^x g_R(s,y)^2\,ds\right)^{1/2}\\
&=(x h_R(x,y))^{1/2}
\le x g_R(x,y).\quad\text{(from (i))} \qedhere
\end{align*}
\end{proof}
\begin{lemma}\label{lem:small}
For $y\in[2,3]$, $\sqrt{\iota_R(x,y)}$ is continuously differentiable with respect to $x$.
Moreover, there exists some $K_0>0$ independent of $x$ and $y$ such that
\begin{align}
0&\le {\del_x}\sqrt{\iota_R(x,y)}\le K_0 \sqrt{y-2}g_R(x,y)
\label{eq:small1}\\
\shortintertext{and}
0&\le \sqrt{\iota_R(x,y)}\le K_0 \sqrt{y-2}\rho_R(x,y).
\label{eq:small2}
\end{align}
\end{lemma}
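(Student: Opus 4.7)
My plan is to reduce both estimates to a single pointwise inequality
\[
\bigl(xg_R(x,y)^2 - h_R(x,y)\bigr)^2 \le K^2 (y-2)\, g_R(x,y)^2\, \iota_R(x,y)
\]
valid for all $x\ge 0$, $y\in[2,3]$ and $R>2$ with an absolute constant $K$; then $K_0$ can be taken as $K/2$. The nonnegativity statements in \eqref{eq:small1}--\eqref{eq:small2} follow from Lemma~\ref{lem:pos}(i),(ii), which give $\iota_R\ge 0$ and $\partial_x \iota_R = xg_R^2 - h_R \ge 0$. The upper bound in \eqref{eq:small1} is immediate from the pointwise inequality via $\partial_x\sqrt{\iota_R} = (xg_R^2-h_R)/(2\sqrt{\iota_R})$, and integrating \eqref{eq:small1} from $0$ (using $\iota_R(0,y)=0$) then gives \eqref{eq:small2}.

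For the $C^1$-regularity claim, I would first note that $\xi\equiv 0$ on $[0,1]$, so $\zeta(s,y)=2s$, $g_R(s,y)^2 = 2$, $h_R(s,y)=2s$, and thus $\iota_R(s,y)=0$ for $s\in[0,1]$; also $\iota_R(x,2)\equiv 0$ by direct computation. Hence $\sqrt{\iota_R}$ vanishes identically on $[0,1]\cup\{y=2\}$, while for $y>2$ and $x>1$ it is smooth. The only delicate point is $x=1$: using $\xi(s)=(s-1)^3$ and $\xi'(s)=3(s-1)^2$ on $[1,3/2]$ I would show $u(s,y):=sg_R(s,y)^2-h_R(s,y) = O((y-2)(s-1)^2)$ near $s=1^+$, so that $\iota_R(x,y)=O((y-2)(x-1)^3)$ and $\partial_x\sqrt{\iota_R} = O(\sqrt{y-2}\,(x-1)^{1/2})\to 0$ as $x\to 1^+$, matching the trivial left-derivative.

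The main technical step is the pointwise inequality, which I would prove by splitting $x$ into four ranges. On $[0,1]$ both sides are zero. On $[2,R]$ we have $u(x,y)=y(y-2)x^{y-1}$ and $g_R^2=y(y-1)x^{y-2}$, and I would derive the lower bound $\iota_R(x,y)\ge c(y-2)+(y-2)(x^y-2^y)$: the constant part comes from $\int_{3/2}^{2} y(y-2)s^{y-1}\xi(s)\,ds\ge c'(y-2)$, since $\xi\ge 1/8$ on $[3/2,2]$; the growing part comes from integrating $u$ on $[2,x]$. Dividing, the ratio $u^2/(g_R^2\iota_R)$ is bounded by $C(y-2)$ uniformly. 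For $x>R$, Lemma~\ref{lem:4}(i) gives $u(x,y)=u(R,y)$ and $g_R(x,y)^2=g_R(R,y)^2$, while $\iota_R$ only increases, so the ratio is dominated by its value at $x=R$.

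The most delicate range is $x\in(1,2]$, where $u$ and $\iota_R$ are simultaneously small. Using the identity $ys^y-2s^2=(y-2)\bigl[s^y+2s^2(s^{y-2}-1)/(y-2)\bigr]$, whose bracket is bounded on $[1,2]\times[2,3]$, I obtain $u(s,y)\le C(y-2)(s-1)^2$ on $[1,3/2]$ and $u(s,y)\le C(y-2)$ on $[1,2]$; combined with the matching lower bound $u(s,y)\ge 3(y-2)(s-1)^2$ on $[1,3/2]$ (from the $\xi'$-term alone, since $ys^y-2s^2\ge y-2$ for $s\ge 1$ and $y\ge 2$), this yields $\iota_R(x,y)\ge (y-2)(x-1)^3$ on $[1,3/2]$ and $\iota_R(x,y)\ge c(y-2)$ on $[3/2,2]$; together with $g_R^2\ge 2$ from Lemma~\ref{lem:4}(v), this gives the desired $u^2\le C'(y-2)g_R^2\iota_R$. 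The main obstacle I anticipate is keeping all constants absolute and patching the four cases into a single $K_0$ independent of $R$ and $y\in[2,3]$.
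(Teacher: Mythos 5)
Your proposal is correct and follows essentially the same route as the paper: your ``single pointwise inequality'' $(\del_x\iota_R)^2\le K^2(y-2)g_R^2\iota_R$ is just the square of \eqref{eq:small1}, and you establish it by the same case analysis on $[0,1]$, $(1,3/2]$, $(3/2,2]$, $(2,R]$, $(R,\infty)$, with two-sided bounds on $\del_x\iota_R=xg_R^2-h_R$ proportional to $y-2$, lower bounds on $\iota_R$ by integration, comparison with $g_R$ via Lemma~\ref{lem:4}(v), the same cubic behavior at $x=1$ for the $C^1$ claim, and integration of \eqref{eq:small1} to get \eqref{eq:small2}. (Only be sure to invoke the lower bound $\iota_R\ge(y-2)(x-1)^3$, not just the $O$-upper bound, when concluding $\del_x\sqrt{\iota_R}=O(\sqrt{y-2}\,(x-1)^{1/2})$ near $x=1$; you do prove it later in your range analysis.)
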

\begin{proof}
Since $\iota_R(x,2)\equiv0$, it suffices to consider the case $y\in(2,3]$. The continuous differentiability of $\sqrt{\iota_R(x,y)}$
is trivial for $x\ne1$ since $\iota_R(x,y)=0$ for $x\in[0,1]$ and $\iota_R(x,y)>0$ for $x>1$.
For $x\in(1,R]$, combining \eqref{eq:xgR} and \eqref{eq:xgR2},
\[
{\del_x}\iota_R(x,y)=
\begin{cases}
(y-2)\left[ y x^{y-1}\xi(x)+\left\{x^y+\dfrac{2(x^y-x^2)}{y-2}\right\}\xi'(x)\right]
& (1<x\le R),\\
y(y-2)R^{y-1} & (x> R).
\end{cases}
\]
Thus, we can confirm that there exist $K_1$ and $K_2$ with $0<K_1<K_2$ such that 
\[
K_1(y-2)\hat\iota(x,y)\le {\del_x}\iota_R(x,y) \le K_2(y-2)\hat\iota(x,y)\quad
\text{for $x\in(1,R]$ and $y\in(2,3]$},
\]
where $\hat\iota$ is defined as
\[
\hat\iota(x,y)=(x-1)^2 \bone_{(1,3/2]}(x)+\bone_{(3/2,2]}(x)+x^{y-1}\bone_{(2,R]}(x).
\]
For $x\in(1,3/2]$, we have 
\[
K_1(x-1)^3(y-2)/3\le \iota_R(x,y)\le K_2(x-1)^3(y-2)/3.
\]
Thus, $\lim_{x\downarrow1}\sqrt{\iota_R(x,y)}/(x-1)=0$. That is, $\sqrt{\iota_R(x,y)}$ is continuously differentiable with respect to $x$ at $1$.
Furthermore,
\[
0<\frac{\del_x\iota_R(x,y)}{\sqrt{\iota_R(x,y)}}\le K_2\sqrt{3/K_1}\sqrt{x-1}\sqrt{y-2}
\le K_2 K_1^{-1/2}\sqrt{y-2}g_R(x,y)
\]
from Lemma~\ref{lem:4}(v).

For $x\in(3/2,2]$, we have $\iota_R(x,y)\ge \iota_R(3/2,y)\ge K_1(y-2)/24$ and
\[
0<\frac{\del_x\iota_R(x,y)}{\sqrt{\iota_R(x,y)}}
\le K_2 \sqrt{24/K_1}\sqrt{y-2}\le K_2 \sqrt{12/K_1}\sqrt{y-2}g_R(x,y).
\] 
For $x\in(2,R]$, 
\begin{align*}
\iota_R(x,y)&\ge \iota_R(2,y)+\int_2^R \del_x\iota_R(s,y)\,ds 
\ge \frac{K_1}{24}(y-2)+K_1(y-2)\frac{x^{y}-2^y}{y}\\
&\ge \frac{K_1}{24}(y-2)\left(1+\frac{x^{y}-2^y}{8}\right)
\ge K_3(y-2)x^y
\end{align*}
for  $K_3=K_1/192$, and
\[
0<\frac{\del_x\iota_R(x,y)}{\sqrt{\iota_R(x,y)}}
\le K_2K_3^{-1/2}\sqrt{y-2}x^{y/2-1}\le K_2K_3^{-1/2}\sqrt{y-2}g_R(x,y).
\] 
For $x>R$, we have $\iota_R(x,y)\ge \iota_R(R,y)\ge K_3 (y-2)R^y$ and
\[
0<\frac{\del_x\iota_R(x,y)}{\sqrt{\iota_R(x,y)}}
\le K_3^{-1/2}\sqrt{y-2}R^{y/2-1}
\le\sqrt{2/K_3}\sqrt{y-2}g_R(x,y).
\]
From these estimates, \eqref{eq:small1} holds by setting 
$K_0=2^{-1}\max\{K_2 \sqrt{12/K_1}, K_2K_3^{-1/2}, \sqrt{2/K_3}\}$.
Integrating each term of \eqref{eq:small1} gives \eqref{eq:small2}.
\end{proof}

\begin{lemma}\label{lem:6''}
For $x\ge0$ and $y\in[2,3]$,
\begin{equation}\label{eq:lemextra}
\max\{0,x^y\log^+ x-(2\log 2)x^2\}\le {\del_y\tau}(x,y)\le x^y\log^+ x.
\end{equation}
\end{lemma}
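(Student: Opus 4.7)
The plan is to express $\del_y\tau$ explicitly and analyze it case by case. Differentiating under the integral sign and using $\del_y(y s^{y-1})=s^{y-1}(1+y\log s)$, I would first obtain
\[
\del_y\tau(x,y)=\int_0^x s^{y-1}(1+y\log s)\xi(s)\,ds.
\]
Since $\xi$ vanishes on $[0,1]$ and the integrand is non-negative on $[1,\infty)$, this yields $\del_y\tau(x,y)\ge 0$ for all $x\ge 0$ and $y\ge 2$, and gives $\del_y\tau(x,y)=0$ when $x\le 1$; in that case both sides of \eqref{eq:lemextra} vanish since $\log^+x=0$.

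For $x\ge 1$, I would exploit the antiderivative identity $\del_s(s^y\log s)=s^{y-1}(1+y\log s)$ and integrate by parts to obtain
\[
\del_y\tau(x,y)=x^y\log x\,\xi(x)-\int_1^x s^y\log s\,\xi'(s)\,ds.
\]
Both factors in the integrand are non-negative on $[1,\infty)$, and $\xi(x)\le 1$, so this immediately yields the upper bound $\del_y\tau(x,y)\le x^y\log x=x^y\log^+ x$.

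The lower bound splits naturally into two subcases. For $x\in[1,2]$ and $y\in[2,3]$, the function $(x,y)\mapsto x^{y-2}\log x$ is non-decreasing in each variable on $[1,\infty)\times[2,\infty)$ (as seen by computing the partial derivatives), hence $x^{y-2}\log x\le 2^{3-2}\log 2=2\log 2$, which gives $x^y\log^+ x\le(2\log 2)x^2$. The claimed lower bound then degenerates into $0\le\del_y\tau(x,y)$, already proved. For $x\ge 2$, we have $\xi(x)=1$ and $\xi'$ is supported in $[1,2]$, so the crude bound $s^y\log s\le 2^y\log 2$ on $[1,2]$ gives
\[
\del_y\tau(x,y)\ge x^y\log x-2^y\log 2\int_1^2\xi'(s)\,ds=x^y\log x-2^y\log 2,
\]
and since $y\le 3$ forces $2^y\log 2\le 8\log 2=(2\log 2)\cdot 4\le(2\log 2)x^2$, the desired inequality follows.

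The only (minor) obstacle is aligning the constants: the fact that $x=2$ is simultaneously the right endpoint of $\mathrm{supp}(\xi')$ and the threshold at which $2^y\log 2\le(2\log 2)x^2$ becomes valid is precisely why the hypothesis $y\in[2,3]$ is imposed and why the constant $2\log 2$ is natural here.
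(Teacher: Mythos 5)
Your proof is correct and follows essentially the same route as the paper's: differentiate under the integral to get $\del_y\tau(x,y)=\int_0^x s^{y-1}(1+y\log s)\xi(s)\,ds$, exploit the antiderivative $s^y\log s$, and split at $x=2$, with the case $x\le 2$ reduced to $x^y\log^+ x\le(2\log 2)x^2$. The only cosmetic difference is that you reach the bounds $x^y\log x$ and $x^y\log x-2^y\log 2$ by integrating by parts against $\xi'$ (supported in $[1,2]$), whereas the paper obtains the same quantities by directly comparing the integrand with $\xi$ replaced by its bounds $0\le\xi\le1$ and $\xi\equiv1$ on $[2,\infty)$.
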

\begin{proof}
We have
\[
{\del_y\tau}(x,y)
= \int_0^x {\del_y\zt}(s,y)\,ds
= \int_0^x \left(s^{y-1}+ys^{y-1}\log s\right)\xi(s)\,ds.
\]
From $\xi(x)=0$ for $x\in[0,1]$, it follows that ${\del_y\tau}(x,y)$ vanishes for $x\in[0,1]$ and is non-decreasing in $x$. For $x\in[0,2]$ and $y\in[2,3]$,
\begin{align*}
x^y \log^+ x-(2\log 2)x^2
\le x^2(2^{y-2}\log 2-2\log 2)\le0\le {\del_y\tau}(x,y).
\end{align*}
Moreover, for $x\ge2$ and $y\in[2,3]$,
\begin{align*}
{\del_y\tau}(x,y)
&\ge \int_2^x \left(s^{y-1}+ys^{y-1}\log s\right)ds\\
&=s^y \log s|_{s=2}^{s=x}\\
&=x^y \log x-2^y \log 2\\
&\ge x^y \log^+ x-(2\log 2)x^2.
\end{align*}
Therefore, the first inequality of \eqref{eq:lemextra} holds.
For $x>1$, we have
\[
{\del_y\tau}(x,y)
\le \int_1^x \left(s^{y-1}+ys^{y-1}\log s\right)ds
=s^y \log s|_{s=1}^{s=x}
=x^y \log x.
\]
Thus, the second inequality of \eqref{eq:lemextra} holds.
\end{proof}
\begin{lemma}\label{lem:5}
For each $\eps>0$, there exists some $y_0=y_0(\eps)>2$ such that $g_R(x,y)x^{-\eps}$ is non-increasing in $x$ for any $y\in[2,y_0]$ and $R>2$.
\end{lemma}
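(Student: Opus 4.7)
The plan is to reduce the assertion to the non-increase of the square $(g_R(x,y)x^{-\eps})^2 = (\del_x\zt)(x\wg R,y)\cdot x^{-2\eps}$ on $(0,\infty)$; since both quantities are non-negative, monotonicity transfers. I would partition $(0,\infty)$ into the four pieces $(0,1]$, $[1,2]$, $[2,R]$, and $[R,\infty)$ and verify monotonicity on each, with continuity at the joins being automatic from the $C^2$ smoothness of $\zt$ in $x$.

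Three of the four pieces are handled by direct computation. On $(0,1]$, $\xi\equiv0$ forces $\del_x\zt(x,y)\equiv2$, so the expression equals $2x^{-2\eps}$, which is strictly decreasing. On $[R,\infty)$, $\del_x\zt(x\wg R,y)=\del_x\zt(R,y)$ is constant in $x$, so the product with $x^{-2\eps}$ is decreasing. On $[2,R]$, where $\xi(x)=1$ and $\xi'(x)=0$, one has $\del_x\zt(x,y)=y(y-1)x^{y-2}$, and the product becomes $y(y-1)x^{y-2-2\eps}$, which is non-increasing in $x$ precisely when $y-2-2\eps\le0$, i.e., $y\le 2+2\eps$.

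The transition interval $[1,2]$, where $\xi$ interpolates between its two boundary regimes and no closed form is available, is where I expect the main difficulty. The strategy there is a compactness/continuity argument. Setting
\[
F(x,y):=x(\del_x^2\zt)(x,y)-2\eps(\del_x\zt)(x,y),
\]
one has
\[
\frac{\del}{\del x}\bigl[(\del_x\zt)(x,y)\cdot x^{-2\eps}\bigr]=x^{-2\eps-1}F(x,y),
\]
so non-increase is equivalent to $F\le 0$. At $y=2$ direct computation gives $\del_x\zt(x,2)\equiv 2$ and $\del_x^2\zt(x,2)\equiv0$, hence $F(x,2)\equiv-4\eps$. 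Since $F$ is jointly continuous on the compact rectangle $[1,2]\times[2,3]$, uniform continuity produces some $y_0'\in(2,3]$ with $F(x,y)\le-2\eps$ on $[1,2]\times[2,y_0']$.

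Setting $y_0(\eps):=\min\{2+2\eps,\,y_0'\}$ then makes the non-increase hold simultaneously on all four pieces. Uniformity in $R>2$ is essentially free: the formulas on $(0,1]$, $[1,2]$, and $[2,R]$ do not involve $R$, while on $[R,\infty)$ the function reduces to a constant multiple of $x^{-2\eps}$. Thus the only genuine technicality is isolating the non-explicit transition region $[1,2]$ and dispatching it by continuity rather than brute-force computation.
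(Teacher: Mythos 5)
Your proposal is correct and follows essentially the same route as the paper: it reduces to the sign of the $x$-derivative of $(\del_x\zt)(x\wg R,y)\,x^{-2\eps}$, handles the regions where $\xi$ is constant by explicit computation (yielding the constraint $y\le 2+2\eps$ on $[2,R]$), and treats the transition interval $[1,2]$ by the same continuity-near-$y=2$ argument (the paper phrases it as uniform convergence of $\del_x\nu$ to $-4\eps x^{-1-2\eps}$ as $y\to2$, you phrase it as uniform continuity of $F$ on a compact rectangle). No gaps.
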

\begin{proof}
Since $g_R(x,y)=g_R(R,y)$ for $x\ge R$, the term $g_R(x,y)x^{-\eps}$ is always non-increasing for $x\ge R$.
It therefore suffices to consider only $x$ in $[0,R]$.
We may additionally assume that $\eps\in(0,1/2)$.
Define
\[
\nu(x,y):=\bigl(g_R(x,y)x^{-\eps}\bigr)^2, 
\quad (x,y)\in[0,R]\times[2,\infty).
\]
It suffices to prove that there exists some $y_0(\eps)>2$ such that, for $(x,y)\in[0,R]\times[2,y_0(\eps)]$,
\begin{equation}\label{eq:lem5-1}
{\del_x\nu}(x,y)
\le0.
\end{equation}
By definition, for $(x,y)\in[0,R]\times[2,\infty)$,
\begin{align*}
\nu(x,y)
&={\del_x\zt}(x,y)x^{-2\eps}\\
&=2x^{-2\eps}+\bigl(y(y-1)x^{y-2-2\eps}-2x^{-2\eps}\bigr)\xi(x)+\bigl(y x^{y-1-2\eps}-2x^{1-2\eps}\bigr)\xi'(x)
\end{align*}
and
\begin{align*}
{\del_x\nu}(x,y)
&=-4\eps x^{-1-2\eps}+\bigl(y(y-1)(y-2-2\eps)x^{y-3-2\eps}+4\eps x^{-1-2\eps}\bigr)\xi(x)\\
&\qad+\bigl(2y(y-1-\eps)x^{y-2-2\eps}-4(1-\eps)x^{-2\eps}\bigr)\xi'(x)
+\bigl(y x^{y-1-2\eps}-2x^{1-2\eps}\bigr)\xi''(x).
\end{align*}

Suppose $0\le x\le1$. 
Since $\xi(x)=\xi'(x)=\xi''(x)=0$,
\[
{\del_x\nu}(x,y)=-4\eps x^{-1-2\eps};
\]
thus, \eqref{eq:lem5-1} holds for this case.

Suppose $2\le x\le R$. Since $\xi(x)=1$ and $\xi'(x)=\xi''(x)=0$,
\[
{\del_x\nu}(x,y)=y(y-1)(y-2-2\eps)x^{y-3-2\eps}.
\]
Therefore, \eqref{eq:lem5-1} holds for $y\in[2,2+2\eps]$.

Since $\xi$, $\xi'$, and $\xi''$ are all bounded, ${\del_x\nu}(x,y)$ converges to $-4\eps x^{-1-2\eps}$ uniformly in $x\in[1,2]$ as $y\to2$.
Thus, there exists some $y_0(\eps)\in(2,2+2\eps]$ such that
\begin{equation}\label{eq:lem5}
\left|{\del_x\nu}(x,y)+4\eps x^{-1-2\eps}\right|\le 4\eps\cdot 2^{-1-2\eps},\quad
(x,y)\in[1,2]\times[2,y_0(\eps)].
\end{equation}
Equation \eqref{eq:lem5} implies \eqref{eq:lem5-1} for $(x,y)\in[1,2]\times[2,y_0(\eps)]$. 
\end{proof}
\begin{lemma}\label{lem:6}
The following inequalities hold for $\dl\in(0,1/2)$, $R>2$, $x\ge0$, and $y\in [2,y_0(\dl)]${\rm:}
\begin{enumerate}
\item $x g_R(x,y)\le (1+\dl)\rho_R(x,y)$,
\item $x g_R(x,y)^2 \le (1+2\dl)h_R(x,y)$,
\item $x h_R(x,y)\le (1+\dl)^2\rho_R(x,y)^2$,
\item $x g_R(x,y)^2-h_R(x,y)\le \dfrac{2\dl(1+\dl)}{1+2\dl}\rho_R(x,y) g_R(x,y)$,
\item $\rho_R(x,y)^2\le 2(1+2\dl)\ph_R(x,y)\le2(1+2\dl)\tau(x,y)$,
\item $\left({\del_x}\psi_R(x,y)\right)^2\le(1+2\dl)g_R(x,y)^2$.
\end{enumerate}
\end{lemma}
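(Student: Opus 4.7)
The plan is to derive all six inequalities as consequences of the monotonicity supplied by Lemma~\ref{lem:5} (namely, $g_R(s,y)\,s^{-\dl}$ is non-increasing in $s$ for $y\in[2,y_0(\dl)]$), together with the bound $h_R(x,y)\le x g_R(x,y)^2$ from Lemma~\ref{lem:pos}(i). The heuristic is that, when $g_R$ grows at most like $s^{\dl}$, any integral $\int_0^x g_R(s,y)^k\,ds$ is forced to dominate $x g_R(x,y)^k/(1+k\dl)$ via a pointwise lower bound; this is how the constants $1+\dl$ and $1+2\dl$ in the statement are produced.

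Concretely, for (i) the monotonicity gives $g_R(s,y)\ge g_R(x,y)(s/x)^{\dl}$ for $0\le s\le x$, so integrating yields $\rho_R(x,y)\ge x g_R(x,y)/(1+\dl)$. Squaring the pointwise inequality before integrating gives (ii). Claim (iii) follows by chaining (i) with Lemma~\ref{lem:pos}(i) to get $x h_R\le x^2 g_R^2\le (1+\dl)^2\rho_R^2$. For (iv), (ii) rewrites $x g_R^2-h_R\le\frac{2\dl}{1+2\dl}x g_R^2$, and then (i) converts one factor of $x g_R$ into $(1+\dl)\rho_R$, producing the stated constant $\frac{2\dl(1+\dl)}{1+2\dl}$.

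For (v), I would use the integration by parts identity
\[
\int_0^x s\, g_R(s,y)^2\,ds = x h_R(x,y) - \ph_R(x,y);
\]
combined with the pointwise form of (ii), this gives $x h_R\le (2+2\dl)\ph_R$, while Cauchy--Schwarz yields $\rho_R^2\le x h_R$, and the bound $\ph_R\le\tau$ is Lemma~\ref{lem:4}(iv). For (vi), direct differentiation gives
\[
(\del_x\psi_R)^2 = \frac{(h_R+x g_R^2)^2}{4 x h_R} = \frac{h_R}{4x} + \frac{g_R^2}{2} + \frac{x g_R^4}{4 h_R},
\]
and bounding the first summand by $g_R^2/4$ via Lemma~\ref{lem:pos}(i) and the last by $(1+2\dl)g_R^2/4$ via (ii) yields $(\del_x\psi_R)^2\le (1+\dl/2)g_R^2\le (1+2\dl)g_R^2$. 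None of these steps is genuinely delicate; the main care needed is at $x=0$, where $h_R$ vanishes and (vi) should be read in a limiting sense --- since $h_R(x,y)=2x+O(x^2)$ near the origin, $(\del_x\psi_R)(0,y)=\sqrt{2}=g_R(0,y)$ and (vi) holds with equality there.
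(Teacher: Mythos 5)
Your proposal is correct and follows essentially the same route as the paper: items (i)--(iv) use exactly the paper's argument (the monotonicity of $g_R(s,y)s^{-\dl}$ from Lemma~\ref{lem:5} integrated pointwise, then Lemma~\ref{lem:pos}(i)). For (v) and (vi) you rearrange the algebra slightly differently (an integration-by-parts identity instead of differentiating $\rho_R^2$, and a term-by-term expansion of $(\del_x\psi_R)^2$ instead of bounding numerator and denominator separately), but these rest on the same ingredients and even yield marginally sharper constants ($2(1+\dl)$ and $1+\dl/2$), which of course imply the stated bounds.
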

\begin{proof}
In the following, we omit $y$ from the notation.

From Lemma~\ref{lem:5}, $g_R(s)s^{-\dl}\ge g_R(x) x^{-\dl}\ge0$ if $0<s\le x$. Then, we have
\begin{align*}
\rho_R(x)&=\int_0^x g_R(s)\,ds
\ge g_R(x) x^{-\dl}\int_0^x s^\dl\,ds
=\frac{x g_R(x)}{1+\dl}\\
\shortintertext{and}
h_R(x)&=\int_0^x g_R(s)^2\,ds\ge g_R(x)^2 x^{-2\dl}\int_0^x s^{2\dl}\,ds
=\frac{x g_R(x)^2}{1+2\dl}.
\end{align*}
Thus, (i) and (ii) hold.
Combining (i) and Lemma~\ref{lem:pos}(i) gives (iii).
From (i) and (ii),
\[
xg_R(x)^2-h_R(x)
\le \left(1-\frac{1}{1+2\dl}\right)x g_R(x)^2
\le \frac{2\dl(1+\dl)}{1+2\dl}\rho_R(x) g_R(x),
\]
which proves (iv).
Next, we prove (v).
From (ii) and Lemma~\ref{lem:pos}(iii), we have
\[
{\del_x}\bigl(\rho_R(x)^2\bigr)
=2g_R(x) \rho_R(x)
\le 2x g_R(x)^2
\le 2(1+2\dl)h_R(x).
\]
Then,
\[
\rho_R(x)^2\le 2(1+2\dl)\int_0^x h_R(s)\,ds
=2(1+2\dl)\ph_R(x).
\]
The second inequality of (v) follows from Lemma~\ref{lem:4}(iv).

Last, we prove (vi). 
The  inequality holds for $x=0$ by direct computation.
Let $x>0$. 
By using (ii) and Lemma~\ref{lem:pos}(i), we have
\[
\left({\del_x}\psi_R(x)\right)^2
=\frac{\bigl(x g_R(x)^2+h_R(x)\bigr)^2}{4x h_R(x)}
\le\frac{\left(2x g_R(x)^2\right)^2}{4(1+2\dl)^{-1}x^2 g_R(x)^2}
=(1+2\dl)g_R(x)^2.\qedhere
\]
\end{proof}
\subsection{Derivation of a differential inequality}
In this and further subsections, we prove Theorem~\ref{th:upper}.
The following inequality is often used throughout this paper without specific mention:
\[
|xy|\le \frac1\a x^2+\frac{\a}{4}y^2
\qquad\text{for }\a>0\text{ and }x,y\in \R.
\]

Let $\eps\in\bigl(0,(1-\eta)/6\bigr]$.
Take $\dl>0$ such that 
\begin{equation}\label{eq:dl1}
(1+\dl)^2\le 1+\eps.
\end{equation}
In particular, we use $2\dl\le\eps\le1/6$.
Let 
\[
q=\min\left\{3, 1+\frac{q_0}2, y(\dl), 2+\frac{\eps}{\kp K_0^2}\right\}\in(2,3]
\quad\text{and}\quad S=\frac{3\gm^2}{\eps}.
\]
Here, $q_0$, $y(\cdot)$, and $K_0$ are as provided in 
\eqref{eq:q0}, Lemma~\ref{lem:5}, and Lemma~\ref{lem:small}, respectively.
We set 
\[
t_0=\min\{1,(q-2)/S\}
\quad\text{and}\quad p(t)=q-St,\ \  t\in[0,t_0].
\]
Note that $p(t)\in[2,q]$ for $t\in[0,t_0]$. 

Let $A,B\in\cB_0$ with $\sd(A,B)<\infty$ and set $w=\sd_B\wg \sd(A,B)\in\D_0$.
Let $\a\in\R\setminus\{0\}$ and define 
\begin{equation}\label{eq:sigma}
u_t=T_t\bone_B,\ F(t)=e^{\a w}u_t,\
\text{and }
\sg(t)=\int_E \tau(F(t),p(t))\,d\mu
\quad \text{for }t\in[0,t_0].
\end{equation}
\begin{lemma}\label{lem:7}
The function $\sg$ is continuously differentiable on $(0,t_0]$ and
\begin{align}
\sg'(t)&=\int_E \left\{ {\del_x\tau}(F(t),p(t))F'(t)
+{\del_y\tau}(F(t),p(t))p'(t)\right\}d\mu\notag\\
&=\int_E \zt(F(t),p(t))e^{\a w}\cL u_t\,d\mu-S\int_E{\del_y\tau}(F(t),p(t))\,d\mu.
\label{eq:lem7}
\end{align}
\end{lemma}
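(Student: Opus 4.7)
The plan is to apply the chain rule to $\tau(F(t),p(t))$ pointwise and then justify interchanging $d/dt$ with $\int_E(\cdot)\,d\mu$ by dominated convergence. Pointwise,
\[
\frac{d}{dt}\tau(F(t),p(t))={\del_x\tau}(F(t),p(t))F'(t)+{\del_y\tau}(F(t),p(t))p'(t),
\]
and ${\del_x\tau}(x,y)=\zt(x,y)$ by the definition \eqref{eq:tu}. Since $p'(t)=-S$, once the interchange is justified and $F'(t)=e^{\a w}\cL u_t$ is established, this immediately gives both equalities of the lemma, so the entire task reduces to an analytic justification.

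To establish $F'(t)=e^{\a w}\cL u_t$ in $L^2(\mu)$, I would use that $(\cE_\theta,\D)$ is a coercive closed form, hence sectorial, so the associated semigroup is analytic on $L^2(\mu)$. Consequently $u_t=T_t\bone_B\in\Dom(\cL)$ for every $t>0$, with $\frac{d}{dt}u_t=\cL u_t$ in $L^2(\mu)$. Because $w\in L^\infty(\mu)$ (it is bounded by $\sd(A,B)<\infty$), multiplication by $e^{\a w}$ is a bounded operator on $L^2(\mu)$, so $F(t)=e^{\a w}u_t$ is $C^1((0,t_0];L^2(\mu))$ with the claimed derivative.

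To apply dominated convergence I would write the difference quotient as
\[
\frac{\sg(t+h)-\sg(t)}{h}=\int_E\int_0^1\bigl[\zt(F_s,p_s)\tfrac{F(t+h)-F(t)}{h}+{\del_y\tau}(F_s,p_s)\tfrac{p(t+h)-p(t)}{h}\bigr]ds\,d\mu,
\]
where $F_s=F(t)+s(F(t+h)-F(t))$ and $p_s=p(t)+s(p(t+h)-p(t))$. The growth estimates $\zt(x,y)\le\max\{2x,yx^{y-1}\}$ from Lemma~\ref{lem:4}(ii) and ${\del_y\tau}(x,y)\le x^y\log^+ x$ from Lemma~\ref{lem:6''}, combined with uniform $L^r$-bounds on $F(\cdot)$ for some $r$ slightly larger than $q$, yield an integrable majorant after a Hölder estimate. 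Such bounds hold because $q\le 1+q_0/2<q_0$, and Corollary~\ref{cor:Lp} provides a uniform bound on $\|u_{t'}\|_{q_0}$ over any compact subinterval of $(0,t_0]$; boundedness of $e^{\a w}$ transfers this to $F$.

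The main obstacle is integrability of the limiting integrand: the factor $\cL u_t$ is only known to lie in $L^2(\mu)$, while $\zt(F(t),p(t))$ grows like $F(t)^{p(t)-1}$, so the product is integrable only after confirming $F(t)\in L^{2(p(t)-1)}(\mu)$. The choice of $q$ is precisely tailored so that $2(q-1)\le q_0$, and then Corollary~\ref{cor:Lp} together with $e^{\a w}\in L^\infty(\mu)$ gives the required inclusion. Continuity of $\sg'$ on $(0,t_0]$ follows similarly, from $L^2$-continuity of $t\mapsto\cL u_t$ (an analyticity consequence) together with $L^r$-continuity of $t\mapsto F(t)$ for $r\in(q,q_0]$, plus continuity of $p(t)$.
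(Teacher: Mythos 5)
Your route is essentially the paper's: represent the difference quotient of $\tau(F(t),p(t))$ by an integral along the segment joining $(F(t),p(t))$ and $(F(t+h),p(t+h))$, use $\del_x\tau=\zt$, the growth bounds of Lemma~\ref{lem:4}(ii) and Lemma~\ref{lem:6''}, the $L^p$-information from Corollary~\ref{cor:Lp}, the boundedness of $e^{\a w}$, and $F'(t)=e^{\a w}\cL u_t$ in $L^2(\mu)$ (the paper uses this last fact without comment; your appeal to analyticity of the semigroup of the coercive closed form is the standard justification, and your exponent bookkeeping $2(p(t)-1)\le 2(q-1)\le q_0$ for integrability of the limiting integrand is correct).

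There is, however, a gap in the limit-interchange step as you state it. Dominated convergence with ``an integrable majorant after a H\"older estimate'' is not available here: the difference quotient $(F(t+h)-F(t))/h$ converges to $e^{\a w}\cL u_t$ only in $L^2(\mu)$, not $\mu$-a.e.\ with a pointwise dominating function, and uniform $L^r$-bounds on $F$ do not produce a majorant on a $\sigma$-finite (possibly infinite) measure space; moreover the relevant exponent for $\zt(F_s,p_s)^2$ is $2(q-1)$, not ``$r$ slightly larger than $q$'', and since $q$ may equal $1+q_0/2$ this exponent can hit $q_0$ exactly, so the uniform bound on $\|u_{t'}\|_{q_0}$ that you invoke is in general insufficient. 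What closes the argument (and is what the paper does) is: (a) use \emph{convergence}, not mere boundedness, of the segment functions $g_{s,n}$ in every $L^r(\mu)$, $r\in[2,q_0]$ --- available because $\{T_t\}_{t>0}$ is strongly continuous on $L^{q_0}(\mu)$ and $\bone_B\in L^2(\mu)\cap L^{q_0}(\mu)$ --- together with a Vitali-type criterion (convergence in measure with respect to $\chi\cdot\mu$ for some $\chi\in L^1(\mu)$, $0<\chi\le1$, plus a uniform truncation condition), applied with the bound $\zt(x,y)^2\le 4x^2+q_0^2x^{q_0}$, to get \emph{strong} $L^2(\mu)$-convergence of $\zt(g_{s,n},h_{s,n})$; and then (b) conclude $L^1(\mu)$-convergence of the product of two strongly $L^2$-convergent sequences, and treat the $\del_y\tau$ term analogously via Lemma~\ref{lem:6''}. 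You clearly have the ingredients (you use the $L^r$-continuity for $r\in(q,q_0]$ when discussing continuity of $\sg'$), but the mechanism must be this Vitali/product-of-$L^2$-convergent-sequences argument rather than dominated convergence.
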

\begin{proof}
We justify the formal calculation.
First, let us recall the following fact (see, e.g., \cite[Theorems~21.4 and 21.8]{Ba}): Suppose $r\in[1,\infty)$ and that $\chi\in L^1(\mu)$ satisfies $0<\chi\le1$ $\mu$-a.e.
Then, for functions $\{f_n\}_{n=1}^\infty$ in $L^r(\mu)$ and $f\in L^0(\mu)$, the sequence $f_n$ converges to $f$ in $L^r(\mu)$ if and only if $f_n$ converges to $f$ in measure with respect to $\chi\cdot\mu$ and
\[
\lim_{K\to\infty}\sup_{n}\int_E \{(|f_n|^r-K\chi)\vee0\} \,d\mu=0.
\]
For $t\in(0,t_0]$ and $\{t_n\}_{n=1}^\infty\subset(0,t_0]\setminus\{t\}$ converging to $t$,
\begin{align}
&\bigl(\tau(F(t_n),p(t_n))-\tau(F(t),p(t))\bigr)/(t_n-t)\notag\\
&=\left.\tau\bigl(F(t)+s(F(t_n)-F(t)),p(t)+s(p(t_n)-p(t))\bigr)\bigl|_{s=0}^{s=1}\middle/(t_n-t)\right.\notag\\
&=(t_n-t)^{-1}\int_0^1\left\{(F(t_n)-F(t)){\del_x\tau}(g_{s,n},h_{s,n})
+(p(t_n)-p(t)){\del_y\tau}(g_{s,n},h_{s,n})\right\}ds\notag\\
&=\frac{F(t_n)-F(t)}{t_n-t}\int_0^1\zt(g_{s,n},h_{s,n})\,ds
-S\int_0^1{\del_y\tau}(g_{s,n},h_{s,n})\,ds,
\label{eq:tau}
\end{align}
where $g_{s,n}=F(t)+s(F(t_n)-F(t))\,(\ge0)$ and $h_{s,n}=p(t)+s(p(t_n)-p(t))$.
For each $s\in[0,1]$, $g_{s,n}$ converges to $F(t)$ in $L^r$ as $n\to\infty$ for every $r\in[2,q_0]$, and $h_{s,n}$ converges to $p(t)$ as $n\to\infty$.
In particular, for every $s\in[0,1]$ and $r\in[2,q_0]$, $g_{s,n}$ converges to $F(t)$ in measure with respect to $\chi\cdot\mu$ and
\[
\lim_{K\to \infty} \sup_n \int_E\{((g_{s,n})^r-K\chi)\vee0\}\,d\mu=0.
\]
From the continuity of $\zt$, $\zt(g_{s,n},h_{s,n})$ converges to $\zt(F(t),p(t))$ in measure with respect to $\chi\cdot\mu$ as $n\to\infty$. 
Lemma~\ref{lem:4}(ii) and the inequality $2<2(q-1)<q_0$ together imply $\zt(x,y)^2\le 4x^2+q_0^2 x^{q_0}$ for $x\ge0$ and $y\in[2,q_0]$. Thus,
\begin{align*}
&\sup_n \int_E\{(\zt(g_{s,n},h_{s,n})^2-K\chi)\vee0\}\,d\mu\\
&\le\sup_n \int_E\{(4(g_{s,n})^2-K\chi/2)\vee0\}\,d\mu
+\sup_n \int_E\{(q_0^2(g_{s,n})^{q_0}-K\chi/2)\vee0\}\,d\mu\\
&\to0 \quad\text{as }K\to\infty.
\end{align*}
From the above, $\zt(g_{s,n},h_{s,n})$ converges to $\zt(F(t),p(t))$ in $L^2(\mu)$.
Moreover, since it is easy to see that $\{\zt(g_{s,n},h_{s,n})\}_{s\in[0,1],\,n\in\N}$ is bounded in $L^2(\mu)$, we obtain
\[
\left\|\int_0^1\zt(g_{s,n},h_{s,n})\,ds-\zt(F(t),p(t))\right\|_2
\le\int_0^1\|\zt(g_{s,n},h_{s,n})-\zt(F(t),p(t))\|_2\,ds
\to0
\]
as $n\to\infty$, by the dominated convergence theorem.
Thus, the first term of \eqref{eq:tau} converges to $F'(t)\zt(F(t),p(t))$ in $L^1(\mu)$ as $n\to\infty$ because $(F(t_n)-F(t))/(t_n-t)$ converges to $F'(t)$ in $L^2(\mu)$.

In the same manner, we can prove that $\int_0^1{\del_y\tau}(g_{s,n},h_{s,n})\,ds$ converges to ${\del_y\tau}(F(t),p(t))$ in $L^1(\mu)$ as $n\to\infty$, by using Lemma~\ref{lem:6''}. 
Thus, \eqref{eq:lem7} is proved. 
The proof of the continuity of $\sg'(t)$ proceeds analogously.
\end{proof}

We fix $t\in(0,t_0]$ and estimate the first term of \eqref{eq:lem7}.
Recall the measurable nest $\{E_k\}_{k=1}^\infty$ in Assumption~\ref{as:1}. Take a sequence of functions $\{w_k\}_{k=1}^\infty$ such that for every $k\in\N$, $w_k\in\D_{E_k,b}$, $w_k=w$ $\mu$-a.e.\ on $E_k$ and $0\le w_k\le N$ $\mu$-a.e.
There also exist functions $\{u^{(k)}\}_{k=1}^\infty$ such that $u^{(k)}\in\D_{E_k,b}$ for each $k$ and
the sequence $u^{(k)}$ converges to $u_t$ in $\D$ as $k\to\infty$. By considering $0\vee (u^{(k)}\wg u_t)$ (and the Ces\`aro means if necessary) we may assume that $0\le u^{(k)}\le u_t$ $\mu$-a.e.\ for every $k$ and $\lim_{k\to\infty}u^{(k)}=u_t$ $\mu$-a.e.
Let $F^{(k)}=e^{\a w_k}u^{(k)}\in\D_{E_k,b}$ for each $k$.
Then,
\[
0\le F^{(k)} = e^{\a w}u^{(k)}\le F(t) \quad\mu\text{-a.e.}
\]
and
\begin{align}
\int_E \zt(F(t),p(t))e^{\a w}\cL u_t\,d\mu
&=\int_E \lim_{R\to\infty} h_R(F(t),p(t))e^{\a w}\cL u_t\,d\mu
\qquad\text{(from Lemma~\ref{lem:4})}\notag\\
&=\lim_{R\to\infty}\int_E \lim_{k\to\infty} h_R(F^{(k)},p(t))e^{\a w_k}\cL u_t\,d\mu\notag\\
&=\lim_{R\to\infty}\lim_{k\to\infty} -\cE\bigl(u_t,h_R(F^{(k)},p(t))e^{\a w_k}\bigr).
\label{eq:limlim}
\end{align}
\begin{lemma}\label{lem:8}
For any $R>2$,
\[
\lim_{k\to\infty} \cE\bigl(u_t,h_R(F^{(k)},p(t))e^{\a w_k}\bigr)
=\lim_{k\to\infty} \cE\bigl(u^{(k)},h_R(F^{(k)},p(t))e^{\a w_k}\bigr).
\]
\end{lemma}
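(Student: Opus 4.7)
The plan is to show that the difference of the two sides tends to zero, by writing
\[
\cE\bigl(u_t,g_k\bigr)-\cE\bigl(u^{(k)},g_k\bigr)=\cE\bigl(u_t-u^{(k)},g_k\bigr),
\]
where $g_k:=h_R(F^{(k)},p(t))e^{\a w_k}$. Since $\cE$ is continuous on $\D\times\D$ with
$|\cE(f,g)|\le (1+\om)\cE^0_l(f)^{1/2}\cE^0_l(g)^{1/2}$, it suffices to prove (a) $g_k\in\D$ for each $k$, (b) $\cE^0_l(u_t-u^{(k)})\to0$ as $k\to\infty$, and (c) $\sup_k\cE^0_l(g_k)<\infty$. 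Item (b) is immediate from the construction of the sequence $\{u^{(k)}\}$, which converges to $u_t$ in $\D$.

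For (a), since $w_k,u^{(k)}\in\D_{E_k,b}$ are bounded, we may replace the functions $(x_1,x_2)\mapsto e^{\a x_1}x_2$ and $x\mapsto h_R(x,p(t))$ by $C^1$ functions that agree with them on the (bounded) effective ranges and have bounded first derivatives vanishing at the origin. The derivation property \eqref{eq:chain} then yields $F^{(k)}=e^{\a w_k}u^{(k)}\in\D_{E_k,b}$, then $h_R(F^{(k)},p(t))\in\D_{E_k,b}$, and finally $g_k\in\D_{E_k,b}$, with the gradient given by
\[
\nb g_k=e^{2\a w_k}\,\del_x h_R(F^{(k)},p(t))\bigl(\a u^{(k)}\nb w_k+\nb u^{(k)}\bigr)+\a e^{\a w_k}h_R(F^{(k)},p(t))\nb w_k.
\]

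For (c), I would bound each factor pointwise: $\del_x h_R(\,\cdot\,,p(t))=g_R(\,\cdot\,,p(t))^2$ is bounded by $g_R(R,p(t))^2$, $|\nb w_k|_H\le1$, $e^{\a w_k}\le e^{|\a|N}$, $u^{(k)}\le u_t$, and by Lemma~\ref{lem:4}(ii), $h_R(F^{(k)},p(t))\le 2F^{(k)}+p(t)(F^{(k)})^{p(t)-1}\le C'(u_t+u_t^{p(t)-1})$ for some constant $C'$ depending only on $R$, $N$, $\a$, $p(t)$. Thus $|\nb g_k|_H$ and $|g_k|$ are pointwise dominated by $C(|\nb u^{(k)}|_H+u_t+u_t^{p(t)-1})$. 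The term $\|\nb u^{(k)}\|_2$ is bounded since $u^{(k)}\to u_t$ in $\D$, and $u_t^{p(t)-1}$ belongs to $L^2(\mu)$ because $2(p(t)-1)\in[2,q_0]$ (recall $q\le 1+q_0/2$), invoking Proposition~\ref{prop:Lp} applied to $\bone_B\in L^{q_0}(\mu)$. This produces the desired uniform bound.

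The main obstacle is controlling the nonlinear growth in $h_R$, which forces us to estimate $u_t$ in an $L^r$-norm with $r>2$; the use of Proposition~\ref{prop:Lp} via the $L^{q_0}$-contractivity of $\{T_t\}$ is the essential ingredient that makes this work, and is precisely the reason the threshold $q\le 1+q_0/2$ was built into the choice of $q$. Everything else is a routine application of the chain rule and Cauchy--Schwarz.
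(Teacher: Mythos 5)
Your proof is correct and follows essentially the same route as the paper: reduce the claim to uniform boundedness of $h_R(F^{(k)},p(t))e^{\a w_k}$ in $\D$ (using $u^{(k)}\to u_t$ in $\D$ and the sector bound $|\cE(f,g)|\le(1+\om)\cE^0_l(f)^{1/2}\cE^0_l(g)^{1/2}$), then control the gradient via the chain rule with $g_R$ bounded, $|\nb w_k|_H\le1$, and the $L^2$-bound on $h_R(F^{(k)},p(t))$ coming from Lemma~\ref{lem:4}(ii) together with $2(q-1)\le q_0$ and the $L^{q_0}$-boundedness of the semigroup. The only differences (your truncation argument for membership in $\D$ and the pointwise domination $u^{(k)}\le u_t$ instead of the paper's splitting $e^{\a w_k}=(e^{\a w_k}-1)+1$) are cosmetic.
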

\begin{proof}
Since $u^{(k)}$ converges to $u_t$ in $\D$ as $k\to\infty$, proving that the sequence $\{h_R(F^{(k)},p(t))e^{\a w_k}\}_{k=1}^\infty$ is bounded in $\D$ suffices. Boundedness in $L^2(\mu)$ is straightforward. 
For each $k$,
\begin{align}
&\cE^0\bigl(h_R(F^{(k)},p(t))e^{\a w_k}\bigr)^{1/2}\notag\\
&\le \cE^0\bigl(h_R(F^{(k)},p(t))(e^{\a w_k}-1)\bigr)^{1/2}
 +\cE^0\bigl(h_R(F^{(k)},p(t))\bigr)^{1/2}\notag\\
&\le \bigl\|g_R(F^{(k)},p(t))^2(e^{\a w_k}-1)|\nb F^{(k)}|_H\bigr\|_2
 +\bigl\| h_R (F^{(k)},p(t))\a e^{\a w_k}|\nb w_k|_H\bigr\|_2\notag\\
&\qad+\bigl\|g_R(F^{(k)},p(t))^2|\nb F^{(k)}|_H\bigr\|_2.
\label{eq:lem8}
\end{align}
We note that $g_R$ is a bounded function, that $|e^{\a w_k}-1|\le e^{|\a|N}$,
that $h_R(F^{(k)},p(t))=0$ on $E\setminus E_k$, that $|\nb w_k|_H \le1$ on $E_k$ and that
\begin{align*}
\bigl\| |\nb F^{(k)}|_H\bigr\|_2
&\le \bigl\|\a e^{\a w_k}u^{(k)}|\nb w_k|_H\bigr\|_2
  +\bigl\|e^{\a w_k}|\nb u^{(k)}|_H\bigr\|_2\\
&\le |\a| e^{|\a|N}\|u^{(k)}\|_2 + e^{|\a|N}\left(2\cE^0(u^{(k)})\right)^{1/2},
\end{align*}
which is bounded in $k$.
From these estimates, the first and third terms of \eqref{eq:lem8} are bounded in $k$.
Moreover,
Lemma~\ref{lem:4}(ii) and the inequality $2<2(q-1)<q_0$ together imply that
$\{h_R(F^{(k)},p(t))\}_{k=1}^\infty$ is bounded in $L^2(\mu)$.
Thus, the second term of \eqref{eq:lem8} is also bounded in $k$, which completes the proof.
\end{proof}
From this lemma and \eqref{eq:limlim},
\begin{equation}\label{eq:st1}
\int_E \zt(F(t),p(t))e^{\a w}\cL u_t\,d\mu
=\lim_{R\to\infty}\lim_{k\to\infty} -\cE(u^{(k)},h_R(F^{(k)},p(t))e^{\a w_k}).
\end{equation}
We provide an upper estimate of the right-hand side.
Let $G_R^{(k)}=\rho_R(F^{(k)},p(t))$ for $k\in\N$.
For the moment, we omit $p(t)$ from the notation and write, for example, $h_R(F^{(k)})$ instead of $h_R(F^{(k)},p(t))$.
We have
\begin{align*}
&-\cE\bigl(u^{(k)},h_R(F^{(k)})e^{\a w_k}\bigr)\\
&=-\cE\bigl(e^{-\a w_k}F^{(k)},e^{\a w_k}h_R(F^{(k)})\bigr)\\
&=-\cE^0\bigl(e^{-\a w_k}F^{(k)},e^{\a w_k}h_R(F^{(k)})\bigr)
-\int_E\left(b,\nb\bigl(e^{-\a w_k}F^{(k)}\bigr)\right)_He^{\a w_k}h_R(F^{(k)})\,d\mu\\*
&\qad-\int_E\left(c,\nb\bigl(e^{\a w_k}h_R(F^{(k)})\bigr)\right)_He^{-\a w_k}F^{(k)}\,d\mu
-\int_E V F^{(k)}h_R(F^{(k)})\,d\mu\\
&=-\frac12\int_E\Bigl(-\a e^{-\a w_k}F^{(k)}\nb w_k+e^{-a w_k}\nb F^{(k)},\\*
&\hspace{6em}\a e^{\a w_k}h_R(F^{(k)})\nb w_k+e^{\a w_k}g_R(F^{(k)})^2\nb  F^{(k)}\Bigr)_H\,d\mu\\*
&\qad-\int_E\left(b,-\a e^{-\a w_k}F^{(k)}\nb w_k+e^{-\a w_k}\nb F^{(k)}\right)_He^{\a w_k}h_R(F^{(k)})\,d\mu\\*
&\qad-\int_E\left(c,\a e^{\a w_k}h_R(F^{(k)})\nb w_k+ e^{\a w_k}g_R(F^{(k)})^2\nb F^{(k)}\right)_H e^{-\a w_k}F^{(k)}\,d\mu\\*
&\qad-\int_E V F^{(k)}h_R(F^{(k)})\,d\mu\\
&=-\frac12\int_E\Bigl\{g_R(F^{(k)})^2|\nb F^{(k)}|_H^2-\a^2 F^{(k)}h_R(F^{(k)})|\nb w_k|_H^2\\*
&\hspace{5em}+\a\bigl(h_R(F^{(k)})-F^{(k)}g_R(F^{(k)})^2\bigr)\left(\nb F^{(k)},\nb w_k\right)_H\Bigr\}d\mu\\*
&\qad+\a\int_E \left(b-c,\nb w_k\right)_H F^{(k)}h_R(F^{(k)})\,d\mu\\*
&\qad+\frac12\int_E \left(b-c,\nb F^{(k)}\right)_H \bigl(F^{(k)}g_R(F^{(k)})^2-h_R(F^{(k)})\bigr)\,d\mu\\*
&\qad-\frac12\int_E \left(b+c,\nb F^{(k)}\right)_H \bigl(F^{(k)}g_R(F^{(k)})^2+h_R(F^{(k)})\bigr)\,d\mu
-\int_E V F^{(k)}h_R(F^{(k)})\,d\mu\\
&=: I_1+I_2+I_3+I_4+I_5.
\end{align*}
Using \eqref{eq:dl1}, we have
\begin{align*}
I_1
&\le -\frac12\int_E g_R(F^{(k)})^2|\nb F^{(k)}|_H^2\,d\mu\\
&\qad+\frac{\a^2(1+\eps)}2  \int_E \rho_R(F^{(k)})^2|\nb w_k|_H^2\,d\mu
\qquad\text{(from Lemma~\ref{lem:6}(iii))}\\
&\qad+\frac{|\a|\eps}{2} \int_E \rho_R(F^{(k)})g_R(F^{(k)})\left|\left(\nb F^{(k)},\nb w_k\right)_H\right| d\mu\\
&\hspace{15em}\text{(from Lemma~\ref{lem:pos}(i) and Lemma~\ref{lem:6}(iv))}\\
&\le -\frac12\int_E \bigl|\nb G_R^{(k)}\bigr|_H^2\,d\mu
+\frac{\a^2(1+\eps)}{2}\int_E \bigl(G_R^{(k)}\bigr)^2\,d\mu
+\frac{|\a|\eps}{2}\int_E G_R^{(k)}\bigl|\nb G_R^{(k)}\bigr|_H d\mu\\
&\le -\cE^0\bigl( G_R^{(k)}\bigr)
+\frac{\a^2(1+\eps)}{2}\bigl\|G_R^{(k)}\bigr\|_2^2
+\eps\bigl\|\nb G_R^{(k)}\bigr\|_2^2+\frac{\eps \a^2}{16}\bigl\| G_R^{(k)}\bigr\|_2^2,\\
I_3
&=\int_E \left(b-c,\nb \sqrt{\iota_R(F^{(k)})}\right)_H  \sqrt{\iota_R(F^{(k)})}\,d\mu\\
&\le \kp\cE^0_1\Bigl(\sqrt{\iota_R(F^{(k)})}\Bigr)
\qquad\text{(from (B.1))}\\
&\le \kp K_0^2(p(t)-2)\left(\frac12\int_E |\nb F^{(k)}|_H^2 g_R(F^{(k)})^2\,d\mu 
+ \bigl\|G_R^{(k)}\bigr\|_2^2\right)
\qquad\text{(from Lemma~\ref{lem:small})}\\
&\le\eps\cE^0_1\bigl( G_R^{(k)}\bigr),\\
I_4+I_5
&=-\int_E \left(b+c,\nb (\psi_R(F^{(k)}))\right)_H  \psi_R(F^{(k)})\,d\mu
-\int_E V \psi_R(F^{(k)})^2\,d\mu\\
&\le\eta\cE^0\left(\psi_R(F^{(k)})\right)+\theta\left\| \psi_R(F^{(k)})\right\|_2^2
\qquad\text{(from (A.2))}\\
&\le \frac{\eta(1+2\dl)}{2}\int_E \bigl|\nb F^{(k)}\bigr|_H^2 g_R(F^{(k)})^2\,d\mu
+\theta(1+\dl)^2\bigl\| G_R^{(k)}\bigr\|_2^2\\*
&\hspace{19em}\text{(from Lemma~\ref{lem:6}(vi) and (iii))}\\
&\le (1+\eps)\eta\cE^0(G_R^{(k)})+(1+\eps)\theta\bigl\| G_R^{(k)}\bigr\|_2^2.
\end{align*}
Moreover, when $\a>0$,
\begin{align}
I_2
&=\a\int_E \left(b-c,\nb w\right)_H \psi_R(F^{(k)})^2\,d\mu\notag\\
&=\a\int_{\{0<\sd_B<\sd(A,B)\}} \left(b-c,\nb \sd_B\right)_H \psi_R(F^{(k)})^2\,d\mu
\quad\text{(from Proposition~\ref{prop:D})}\label{eq:I2}\\
&\le\a\Biggl\{\frac{\eps}{\a}\cE^0\bigl( \psi_R(F^{(k)})\bigr)+\lm_{\eps/\a}\bigl\| \psi_R(F^{(k)})\bigr\|_2^2\notag\\*
&\qad+\gm\bigl\| \psi_R(F^{(k)})\bigr\|_2\left(\int_E \psi_R(F^{(k)})^2 \log^+\frac{\psi_R(F^{(k)})^2}{\bigl\| \psi_R(F^{(k)})\bigr\|_2^2}\,d\mu\right)^{1/2}\Biggr\}\notag\\*
&\hspace{15em}\text{(from (B.2)$_{A,B}$ with $f=\psi_R(F^{(k)})/\|\psi_R(F^{(k)})\|_2$)}\notag\\
&\le\eps(1+\eps)\cE^0\bigl( G_R^{(k)}\bigr)+\a\lm_{\eps/\a}(1+\eps)\bigl\| G_R^{(k)}\bigr\|_2^2\notag\\*
&\qad+\eps\a^2(1+\eps)\bigl\| G_R^{(k)}\bigr\|_2^2+ \frac{\gm^2}{4\eps}\int_E \psi_R(F^{(k)})^2 \log^+\frac{\psi_R(F^{(k)})^2}{\bigl\| \psi_R(F^{(k)})\bigr\|_2^2}\,d\mu.\notag\\*
&\hspace{19em}\text{(from Lemma~\ref{lem:6}(vi) and (iii))}\notag
\end{align}
Assume $\a>0$ in what follows.
Combining \eqref{eq:lem7}, \eqref{eq:st1}, and the estimates from $I_1$ to $I_5$ above, we have
\begin{align*}
\sg'(t)
&=\lim_{R\to\infty}\lim_{k\to\infty}\left(-\cE\bigl(u^{(k)},h_R(F^{(k)},p(t))e^{\a w_k}\bigr)
-S\int_E {\del_y\tau}(F(t),p(t))\,d\mu\right)\\
&\le\varliminf_{R\to\infty}\varliminf_{k\to\infty}\biggl[C_0\cE^0\bigl( G_R^{(k)}\bigr)+C_1\bigl\|G_R^{(k)}\bigr\|_2^2
+\frac{\gm^2}{4\eps}\int_E \psi_R(F^{(k)})^2\log^+\frac{\psi_R(F^{(k)})^2}{\bigl\| \psi_R(F^{(k)})\bigr\|_2^2}\,d\mu\\*
&\hspace{6em}-S\int_E \bigl\{F(t)^{p(t)} \log^+ F(t)-2(\log 2)F(t)^2\bigr\}\,d\mu\biggr],
\quad\text{(from Lemma~\ref{lem:6''})}
\end{align*}
where
\begin{align}
C_0&=-1+3\eps+(1+\eps)\eta+\eps(1+\eps)\le0
\quad\text{(by the choice of $\eps$)}\notag\\
\shortintertext{and}
C_1&=\frac{(1+\eps)\a^2}{2}+\frac{\eps\a^2}{16}+\eps+(1+\eps)\theta+(1+\eps)\a\lm_{\eps/\a}+\eps(1+\eps)\a^2.
\label{eq:C1}
\end{align}
Let $G(t)=\sqrt{\tau(F(t),p(t))}$.
Note that $\|G(t)\|_2^2=\sg(t)$.
From Lemmas~\ref{lem:6}(iii)(v) and \ref{lem:4}(ii),
\begin{align}\label{eq:G1}
&0\le \psi_R(F^{(k)})\le \sqrt{1+\eps}G_R^{(k)}\le \sqrt{2}(1+\eps)G(t)\le2G(t)\quad\text{$\mu$-a.e.}\notag\\*
&\text{for all $k\in\N$ and $R>2$}
\end{align}
and
\begin{equation}\label{eq:G2}
F(t)\le G(t)\le F(t)\vee F(t)^{p(t)/2}.
\end{equation}
Since $\log^+(x/y)\le\log^+x+\log^- y$ for $x,y>0$ and the maps $[0,\infty)\ni a\mapsto a\log^+ a\in[0,\infty)$ and $[0,\infty)\ni a\mapsto a+a\log^- a\in[0,\infty)$ are both non-decreasing, we obtain, for every $k\in\N$ and $R>2$, that
\begin{align*}
&\int_E \psi_R(F^{(k)})^2\log^+\frac{\psi_R(F^{(k)})^2}{\bigl\| \psi_R(F^{(k)})\bigr\|_2^2}\,d\mu\\
&\le \int_E \psi_R(F^{(k)})^2\log^+\frac{\psi_R(F^{(k)})^2}{4}\,d\mu
+\bigl\|\psi_R(F^{(k)})\bigr\|_2^2\log^-\frac{\bigl\| \psi_R(F^{(k)})\bigr\|_2^2}{4}\\
&\le 4\int_E G(t)^2 \log^+ G(t)^2\,d\mu+4\left(\|G(t)\|_2^2+\|G(t)\|_2^2 \log^-\|G(t)\|_2^2\right)
\quad\text{(from \eqref{eq:G1})}
\end{align*}
and
\begin{align*}
\int_E G(t)^2 \log^+ G(t)^2\,d\mu
&\le \int_E\bigl(F(t)^2\vee F(t)^{p(t)}\bigr)\log^+\bigl(F(t)^2\vee F(t)^{p(t)}\bigr)\,d\mu
\quad\text{(from \eqref{eq:G2})}\\
&= p(t)\int_E F(t)^{p(t)}\log^+ F(t)\,d\mu.
\end{align*}
Here, in the last equality, we used the identity
\[
(a^2\vee a^{p(t)})\log^+(a^2 \vee a^{p(t)})
= p(t) a^{p(t)}\log^+ a
\quad\text{for }a\ge0.
\]
Because we set $S=3\gm^2/\eps$,
\begin{align}
\sg'(t)
&\le (2+2\eps)C_1\|G(t)\|_2^2
+\frac{p(t)S}{3}\int_E F(t)^{p(t)}\log^+ F(t)\,d\mu\notag\\
&\qad+\frac{S}{3}\bigl(\|G(t)\|_2^2+\|G(t)\|_2^2\log^-\|G(t)\|_2^2\bigr)\notag\\
&\qad-S\int_E F(t)^{p(t)} \log^+ F(t)\,d\mu
+2S(\log 2)\|F(t)\|_2^2\notag\\
&\le U\sg(t)+W \sg(t)\log^- \sg(t),\label{eq:diffeq}
\end{align}
where
\begin{equation}\label{eq:UV}
U=(2+2\eps)C_1+\Bigl(\frac13+2\log 2\Bigr)S
\quad\text{and}\quad
W=\frac{S}{3}.
\end{equation}
\subsection{Solving the differential inequality}
We give an explicit upper bound of $\sg(t)$.
Since $\|u_0\|_2>0$, there exists some $t_1\in(0,t_0]$ such that $\|u_t\|_2>0$ for $t\in[0,t_1]$.
For this step, we consider only $t\in[0, t_1]$.
Keeping in mind that
\[
(\log\sg)'(t)=\sg'(t)/\sg(t)\le U+W\log^-\sg(t)
\]
from \eqref{eq:diffeq}, we define
\begin{equation}\label{eq:chi}
\chi(x)=\int_0^x\frac1{U+W\max\{-s,0\}}\,ds
=\begin{cases}
-\dfrac1W\log\left(1-\dfrac{W}{U}x\right)&(x\le0),\smallskip\\
\dfrac{x}{U} &(x>0).
\end{cases}
\end{equation}
Then, 
\[
\bigl(\chi(\log \sg)\bigr)'(t)=\chi'(\log \sg(t))\bigl(\log \sg\bigr)'(t)\le1,
\]
which implies
\[
\chi(\log \sg(t))\le \chi(\log \sg(0))+t,\qquad t\ge0.
\]
This inequality implies
\begin{equation}\label{eq:logsg}
\log \sg(t)\le \begin{cases}\dfrac{U}{W}\left[1-\exp\bigl(-W\{\chi(\log \sg(0))+t\}\bigr)\right] & \text{if }\sg(t)\le1,\\
U\{\chi(\log \sg(0))+t\} & \text{if }\sg(t)>1.
\end{cases}
\end{equation}
We can confirm that 
\[
\dfrac{U}{W}\left[1-\exp(-W z)\right]\le U z,\quad z\in\R,
\]
so that \eqref{eq:logsg} implies
\begin{equation}\label{eq:sg}
\sg(t)\le\exp\bigl(U \{\chi(\log \sg(0))+ t\}\bigr).
\end{equation}

For the proof of Theorem~\ref{th:upper}, we assume that $\sd(A,B)>0$ because otherwise the assertion is trivial.
Define
\begin{equation}\label{eq:sg12}
\sg_1(t,\a)=\int_E \tau(e^{\a w}T_t\bone_B,p(t))\,d\mu,\quad
\sg_2(t,\a)=\int_E \tau(e^{-\a w}\hat T_t\bone_A,p(t))\,d\mu
\end{equation}
for $t>0$ and $\a>0$.
Both $\sg_1(t,\a)$ and $\sg_2(t,\a)$ have the same kind of estimates as \eqref{eq:sg}.
Indeed, for the estimate of $\sg_2$, the discussion in the previous subsection is applied with $b$, $c$, and $\a$  replaced by $c$, $b$, and $-\a$, respectively.
The only term that requires care is $I_2$, but the estimate \eqref{eq:I2} is unchanged by this replacement.
From the Cauchy--Schwarz inequality and Lemma~\ref{lem:4}(ii),
\begin{align}
P_t(A,B)
&\le\left\{\int_E(e^{\a w}T_{t/2}\bone_B)^2\,d\mu\right\}^{1/2}\left\{\int_E(e^{-\a w}\hat T_{t/2}\bone_A)^2\,d\mu\right\}^{1/2}\notag\\
&\le \sg_1(t/2,\a)^{1/2}\sg_2(t/2,\a)^{1/2}.
\label{eq:PtAB}
\end{align}
Letting $N=\sd(A,B)$ and $\a=N/t$, we have
\begin{equation}\label{eq:upperC}
\varlimsup_{t\to0}t\log P_t(A,B)
\le \varlimsup_{t\to0}\frac{t}{2}\log \sg_1(t/2,N/t)
+ \varlimsup_{t\to0}\frac{t}{2}\log \sg_2(t/2,N/t).
\end{equation}
We also have
\begin{align*}
\sg_1(0,N/t)&=\int_E \tau(\bone_B,q)\,d\mu=\mu(B),\\
\sg_2(0,N/t)&=\int_E \tau(e^{-N^2/t}\bone_A,q)\,d\mu=\mu(A)e^{-2N^2/t},\\
\frac{t}{2}\log\sg_j\Bigl(\frac{t}{2},\frac{N}{t}\Bigr)
&=\frac{U t}{2}\chi\Bigl(\log \sg_j\Bigl(0,\frac{N}{t}\Bigr)\Bigr)+\frac{Ut^2}{4},
\quad j=1,2.
\end{align*}
We remark that $U$ and $\chi$ depend on $\a$
(see \eqref{eq:UV} and \eqref{eq:C1}).
When $\a=N/t$,
\begin{align*}
\lim_{t\to0}U t^2 
&=(2+2\eps)\left(\frac{1+\eps}{2}+\frac{\eps }{16}+\eps \right)N^2
\qquad\text{(from \eqref{eq:lmeps})}\\
&=: \b(\eps)N^2
\end{align*}
 and 
\[
\lim_{t\to0}U\chi(x)=x \quad\text{for }x\in\R,
\]
in view of \eqref{eq:chi}.
In particular, $U=O(t^{-2})$ as $t\to0$.
We also remark that $\lim_{\eps\to0}\b(\eps)=1$.
Then, we obtain
\[
\frac{U t}{2}\chi\Bigl(\log \sg_1\Bigl(0,\frac{N}{t}\Bigr)\Bigr)+\frac{U t^2}{4}
=\frac{t}{2}U\chi(\log \mu(B))+\frac{U t^2}{4}
\to \frac{\b(\eps)N^2}{4} \qquad\text{as }t\to0.
\]
Therefore,
\begin{equation}\label{eq:upperA}
\varlimsup_{t\to0}\frac{t}{2}\log \sg_1(t/2,N/t)\le \frac{\b(\eps)N^2}{4}.
\end{equation}
Also, for $t$ small enough that $\sg_2(0,N/t)<1$,
\begin{align*}
\frac{Ut}{2}\chi\Bigl(\log \sg_2\Bigl(0,\frac{N}{t}\Bigr)\Bigr)+\frac{U^2 t}{4}
&=\frac{t}{2}\left(-\frac{U}{W}\log\left(1-\frac{W}{U}\Bigl(\log \mu(A)-\frac{2N^2}{t}\Bigr)\right)+\frac{U t}{2}\right)\\
&=\frac{U t}{2W}\left(-\frac{2W N^2}{Ut}+O(t^2)\right)+\frac{U t^2}{4}\\
&\to -N^2+\frac{\b(\eps)}{4}N^2 \qquad\text{as }t\to0.
\end{align*}
Therefore,
\begin{equation}\label{eq:upperB}
\varlimsup_{t\to0}\frac{t}{2}\log \sg_2(t/2,N/t)
\le \left(-1+\frac{\b(\eps)}{4}\right)N^2.
\end{equation}
By combining \eqref{eq:upperC}, \eqref{eq:upperA}, and \eqref{eq:upperB}, 
\[
\varlimsup_{t\to0}t\log P_t(A,B)\le \left(-1+\frac{\b(\eps)}{2}\right)N^2.
\]
Letting $\eps\to0$, we obtain \eqref{eq:upper},
which finishes the proof of Theorem~\ref{th:upper}.
\begin{rem}
\begin{enumerate}
\item As seen from the proof, when we can let $\gm=0$ in (B.2)$_{A,B}$, the $L^p$-analysis is not necessary and the proof becomes much simpler.
\item
If $(E,\cB,\mu)$ is a finite measure space, 
then we can define $\sg$, $\sg_1$, and $\sg_2$ as 
\begin{align*}
\sg(t)&=\int_E F(t)^{p(t)}\,d\mu,\\
\sg_1(t,\a)&=\int_E (e^{\a w}T_t\bone_B)^{p(t)}\,d\mu,\\
\sg_2(t,\a)&=\int_E (e^{-\a w}\hat T_t\bone_A)^{p(t)}\,d\mu
\end{align*}
and use the inequality
\[
P_t(A,B)\le \mu(E)^{1-\frac2{p(t/2)}}\sg_1(t/2,\a)^{\frac1{p(t/2)}}\sg_2(t/2,\a)^{\frac1{p(t/2)}}
\]
in place of \eqref{eq:sigma}, \eqref{eq:sg12}, and \eqref{eq:PtAB}.
This change makes the proof of Theorem~\ref{th:upper} shorter and simpler since the fine estimates in Section~3.2 are not necessary.
\end{enumerate}
\end{rem}
\section{Proof of Theorem~\protect{\ref{th:lower}}}
\subsection{Cutoff functions and their properties}
We turn to the lower-side estimate
and prove Theorem~\ref{th:lower}. 
In Section~2.1 of \cite{HR}, some nice concave functions are introduced
as cutoff functions. 
Because our semigroup $\{T_t\}_{t>0}$ does not have the Markov property in general, we need to modify these functions to be suitable.
First, we take a real-valued function $g$ on $\R$ satisfying the following properties:
\begin{itemize}
\item $g$ is an odd and bounded $C^3$-function;
\item $g(x)=x$ for $x \in[-1,1]$ and $0<g'(x) \le 1$ on $\R$; and
\item there is a positive constant $C$ such that $0 \le -g''(x) \le Cg'(x)$ for $x\in[-1,\infty)$. 
\end{itemize}
These conditions imply that $\lim_{x \to \infty} g(x)=L$, $\lim_{x \to -\infty}g(x)=-L$ for some $L>1$ and that the convergence is monotone. Note that $g$ is concave on $[-1,\infty)$.

Define our main cutoff functions at level $K>0$ by 
\begin{equation*}
 \phi^{K}(x)=Kg(x/K),\ 
 \Phi^K(x)=\int_{0}^{x}(\phi^K)'(s)^{2}\,ds,\text{ and }\Psi^K(x)=x (\phi^K)'(x)^{2}.
\end{equation*}
From the conditions on $g$, we have the following properties:
\begin{enumerate}
\item[(C.1)]  $0<(\phi^K)'(x)\le1$, 
\item[(C.2)]  
$ 0\le |(\phi^K)''(x)|\le CK^{-1}(\phi^K)'(x)$,
\item[(C.3)] 
$  0 \le \Psi^K(x) \le \Phi^K(x) \le \phi^K(x) \le LK$ for $x \ge 0$, 
and 
$|\Psi^K(x)|, |\Phi^K(x)|, |\phi^K(x)| \le LK $ on $\R$,
\item[(C.4)] $\phi^K(x)=\Phi^K(x)=\Psi^K(x)=x $ on $[-K,K]$,
\item[(C.5)] 
$\lim_{x \to \infty}\Psi^K(x)=0$,
\item[(C.6)] $ \Phi^K(x/\b) \ge \Phi^K(x)/\b$ for all $\b>1$ and $x \ge 0$.
\end{enumerate}
To simplify the notation,  
we omit explicit indication of the dependency on $K$ for most of this section. For example, we write $\phi$ instead of $\phi^{K}$ whenever the value of $K$ is clear from the context. 
The monotonicities of $\phi$, $\Phi$, (C.3), and (C.5) guarantee that 
$\phi$, $\Phi$, and $\Psi$ can be extended to continuous functions on $[-\infty,\infty]$,
and these extensions use the same symbols.
The following estimates result:
\begin{enumerate}
\item[(C.7)] $ \Phi^{K}(x)-\Phi^{K}(\Phi^{M}(x)) \le \Phi^{K}(\infty)-\Phi^{K}(M)$ for all $K,M>0$ and $x \in \R$.
\end{enumerate}
Indeed, this inequality is trivial when 
$x\le M$,
and when $x>M$ it is deduced from $\Phi^{K}(x)\le \Phi^{K}(\infty)$ and $\Phi^{K}\bigl(\Phi^{M}(x)\bigr) \ge \Phi^{K}(M)$.

We also introduce a function $\hat\Phi^K$ on $\R$, defining it as
\begin{equation}\label{eq:Khat}
\hat\Phi^K(x)=\begin{cases}
\Phi^K(x) & (x\ge -K),\\
x & (x<-K).\end{cases}
\end{equation}
This function is concave and 1-Lipschitz on $\R$.

For functions
 $u_{t}^{\dl}$ on $E$ with parameters $t$ and $\dl$, we write $\phi_{t}^{\dl}$ for $\phi(u_{t}^{\dl})$, $\Phi_{t}^{\dl}$ for $\Phi(u_{t}^{\dl})$, $\bar{\phi}_{t}^{\dl}$ for $t^{-1}\int_{0}^{t}\phi_{s}^{\dl}\,ds$, and so on. We denote  $\int_{E}fg\,d\mu$ by $(f,g)_\mu$ for functions $f$ and $g$ on $E$.
 
  For $\dl \in \left(0,1 \right]$ and $f \in L^{2}(\mu)$ with $f\ge0$ $\mu$-a.e.\ and $t>0$, we define
\[
u_{t}^{\dl}(x)=-t \log \left(T_{t}f(x)+\dl \right), \quad e_{t}^{\dl}=-t \log \dl{}.
\]
We need the following lemmas, introduced in \cite{HR}.
\begin{lemma}
\label{lem:convexlem} 
The function $F(x)=\Phi(-t \log x)$ is convex for $x \in [0,\infty]$ 
if $0<t\le K/(2C)$.
\end{lemma}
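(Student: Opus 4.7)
The plan is to compute the second derivative of $F$ on the open interval $(0,\infty)$, show it is nonnegative under the stated bound on $t$, and then extend convexity to the endpoints by continuity.

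Setting $u(x)=-t\log x$, we have $u'(x)=-t/x$ and $u''(x)=t/x^2$, so by the chain rule
\begin{equation*}
F'(x)=-\frac{t}{x}\Phi'(u),\qquad
F''(x)=\frac{t}{x^2}\bigl(t\Phi''(u)+\Phi'(u)\bigr).
\end{equation*}
From $\Phi'(y)=(\phi^K)'(y)^2$ we get $\Phi'\ge 0$ and $\Phi''(y)=2(\phi^K)'(y)(\phi^K)''(y)$. By property (C.2), $|(\phi^K)''|\le CK^{-1}(\phi^K)'$, hence
\begin{equation*}
\Phi''(y)\ge -2CK^{-1}(\phi^K)'(y)^2=-\frac{2C}{K}\Phi'(y).
\end{equation*}
Therefore
\begin{equation*}
t\Phi''(u)+\Phi'(u)\ge \Bigl(1-\frac{2Ct}{K}\Bigr)\Phi'(u)\ge 0
\end{equation*}
whenever $t\le K/(2C)$, so $F''\ge 0$ on $(0,\infty)$ and $F$ is convex there.

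To extend this to $[0,\infty]$, observe that by (C.3) the function $\Phi$ is bounded on $\R$ and continuously extends to $[-\infty,\infty]$, so
\begin{equation*}
F(0)=\lim_{x\downarrow 0}\Phi(-t\log x)=\Phi(+\infty),\qquad F(\infty)=\Phi(-\infty)
\end{equation*}
are both finite and $F$ is continuous on the compact interval $[0,\infty]$. Since a function that is continuous on $[0,\infty]$ and convex on the open interior is convex on the whole interval, the conclusion follows. I do not anticipate any real obstacle here: the only subtle point is verifying that the boundary values remain consistent with convexity, which is immediate from the boundedness supplied by (C.3) and (C.5).
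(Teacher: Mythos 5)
Your proof is correct and follows essentially the same route as the paper: compute $F''(x)=\frac{t}{x^2}\bigl(\Phi'(u)+t\Phi''(u)\bigr)=\frac{t}{x^2}\bigl((\phi')^2+2t\phi'\phi''\bigr)\big|_{u}$ and use (C.2) to get nonnegativity when $t\le K/(2C)$. The only addition is your explicit remark on extending convexity to the endpoints of $[0,\infty]$, which the paper leaves implicit.
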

\begin{proof} 
The proof here follows the proof of Lemma 2.1 in \cite{HR}.
Compute
\begin{align*} 
F'(x)&=-\frac{t}{x} \Phi'(-t \log x), \\
F''(x)&=\frac{t}{x^{2}}\Phi'(-t \log x)+\frac{t^{2}}{x^{2}}\Phi''(-t \log x)\\
&=\frac{t}{x^2}\left.\left((\ph')^2+2t\ph'\ph''\right)\right|_{-t\log x}.
\end{align*}
From this and (C.2), $F$ has a nonnegative second derivative.
\end{proof}
\begin{lemma}[{%
\cite[Lemma 2.2]{HR}}]\label{lem:lsclem}
Suppose that $F$ is a concave continuous function defined on $\R$. If $f_{n} \to f$ weakly in $L^{2}(\mu)$, $F(f_{n}) \in L^{2}(\mu)$ for each $n$, and $F(f_{n})$ has a subsequence that converges to some function 
$\hat{F}$ weakly in $L^{2}(\mu)$, then $\hat{F} \le F(f)$.
\end{lemma}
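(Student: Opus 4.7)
The plan is to pass from weak convergence to strong (hence pointwise $\mu$-a.e.) convergence via Mazur's theorem, applied simultaneously to $f_n$ and $F(f_n)$, and then to exploit concavity pointwise.

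First I would replace $\{f_n\}$ by the subsequence along which $F(f_n)$ converges weakly to $\hat F$ in $L^2(\mu)$; then in the product Hilbert space $L^2(\mu)\times L^2(\mu)$, endowed with its natural inner product, the pair $(f_n,F(f_n))$ converges weakly to $(f,\hat F)$. Mazur's theorem applied in this product space yields convex combinations
\[
\tilde f_n=\sum_{i\ge n}\lm_{n,i}f_i,\qquad \tilde F_n=\sum_{i\ge n}\lm_{n,i}F(f_i),
\]
with the \emph{same} nonnegative weights $\lm_{n,i}$ (finitely many nonzero, summing to $1$), such that $\tilde f_n\to f$ and $\tilde F_n\to\hat F$ strongly in $L^2(\mu)$. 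Extracting a further subsequence I may assume both convergences hold $\mu$-a.e.

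Concavity of $F$ then gives, $\mu$-a.e.,
\[
\tilde F_n=\sum_{i\ge n}\lm_{n,i}F(f_i)\le F\Bigl(\sum_{i\ge n}\lm_{n,i}f_i\Bigr)=F(\tilde f_n),
\]
and letting $n\to\infty$ together with the continuity of $F$ produces $\hat F\le F(f)$ $\mu$-a.e.

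The one point that needs care is the use of the \emph{same} convex combinations for the two sequences; this is exactly what working in the product Hilbert space buys, since a single application of Mazur's theorem there supplies one sequence of coefficient vectors $\{\lm_{n,i}\}$ that simultaneously realises strong convergence of both coordinates. Everything else is routine: continuity of $F$ turns the $\mu$-a.e.\ limit of $F(\tilde f_n)$ into $F(f)$, and no additional integrability of $\hat F$ or $F(f)$ is required for the pointwise inequality to follow.
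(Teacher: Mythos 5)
Your proof is correct: applying Mazur's theorem in the product space $L^{2}(\mu)\times L^{2}(\mu)$ to get one set of convex weights that works simultaneously for $f_n$ and $F(f_n)$, passing to an a.e.\ convergent subsequence, and then using finite Jensen's inequality for the concave $F$ together with its continuity is a complete argument, and no integrability of $\hat F$ or $F(f)$ is needed for the pointwise conclusion. The paper itself gives no proof here (it simply quotes \cite[Lemma 2.2]{HR}), and your argument is essentially the standard one behind that cited lemma, so there is nothing substantive to compare beyond noting the agreement.
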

\begin{lemma}[{%
\cite[Lemma 2.4]{HR}}]\label{lem:wklem}
Let $\{f_n\}$ be a sequence of $\D$ that converges weakly to some $f$ in $\D$.
Then,
\[
\varliminf_{n\to\infty}\int_E |\nb f_n|_H^2 h\,d\mu \le \int_E |\nb f|_H^2 h\,d\mu
\]
 for $h\in \D$ with $0\le h\le M$ $\mu$-a.e.\ for some $M\ge0$.
\end{lemma}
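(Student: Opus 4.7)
The plan is to reduce the statement to the standard weak lower-semicontinuity of a weighted $L^2$-norm. I view it as an instance of a convex, nonnegative quadratic form being evaluated along a weakly convergent sequence, and expect a routine two-step argument based on the fact that bounded linear operators carry weak convergence to weak convergence.

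First, from the definition of the Hilbert-space inner product $\cE^0_1$ on $\D$, the gradient map $\nb\colon(\D,\cE^0_1)\to L^2(\mu;H)$ is a bounded linear operator, hence weak-to-weak continuous. Thus $f_n\to f$ weakly in $\D$ implies $\nb f_n\to\nb f$ weakly in $L^2(\mu;H)$. Second, since $h\in\D$ with $0\le h\le M$ $\mu$-a.e., the scalar $\sqrt{h}$ lies in $L^\infty(\mu)$ with $\|\sqrt{h}\|_\infty\le\sqrt{M}$, and scalar multiplication $\phi\mapsto\sqrt{h}\,\phi$ is a bounded linear operator on $L^2(\mu;H)$, again weak-to-weak continuous. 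Combining these gives $\sqrt{h}\,\nb f_n\to\sqrt{h}\,\nb f$ weakly in $L^2(\mu;H)$, and the weak lower-semicontinuity of the squared Hilbert-space norm yields
\[
\int_E h\,|\nb f|_H^2\,d\mu\le\varliminf_{n\to\infty}\int_E h\,|\nb f_n|_H^2\,d\mu.
\]

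A remark on the direction of the inequality: the displayed statement reads $\varliminf_n\int_E h\,|\nb f_n|_H^2\,d\mu\le\int_E h\,|\nb f|_H^2\,d\mu$, which has the two sides transposed relative to the bound just derived. I interpret this as a typographical reversal of the inequality sign, since the convex functional in question is generically strictly weakly lower-semicontinuous and the literal direction fails already for the textbook oscillation example $f_n(x)=n^{-1}\sin(n\pi x)$ on $[0,1]$ with $h\equiv 1$ (left side $=1/2$, right side $=0$). The weak-LSC direction above is the one that fits naturally into the proof of Theorem~\ref{th:lower}, where it is used to extract upper bounds on $\int|\nb u_0|_H^2\chi\,d\mu$ from the corresponding integrals along $u_t$. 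The main ingredient, and essentially the only substantive step, is the twofold application of weak-to-weak continuity of bounded linear operators; there is no serious technical obstacle.
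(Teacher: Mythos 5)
Your argument is correct, and your diagnosis of the statement is right: as printed, the inequality is transposed. This is a misprint in the quotation of \cite[Lemma~2.4]{HR} — the direction you prove, $\int_E|\nb f|_H^2h\,d\mu\le\varliminf_{n\to\infty}\int_E|\nb f_n|_H^2h\,d\mu$, is both the one stated in \cite{HR} and the one actually used in this paper (in the proof of Lemma~\ref{lem:sqlem}, passing from \eqref{eq:eq100} to \eqref{eq:eqsqite} after an application of Lemma~\ref{lem:meanlem} requires lower semicontinuity along the weakly convergent sequence $\bar\phi^\dl_{t_{n'}}\chi_k$), and your oscillation example refutes the literal printed inequality. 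The paper itself gives no proof, only the citation, so there is nothing to compare line by line; your route (weak-to-weak continuity of the bounded operators $\nb\colon(\D,\cE^0_1)\to L^2(\mu;H)$ and multiplication by $\sqrt h$, then weak lower semicontinuity of the $L^2(\mu;H)$-norm) is the standard argument and is essentially equivalent to the one in \cite{HR}, which instead integrates the pointwise bound $|\nb f_n|_H^2\ge 2(\nb f_n,\nb f)_H-|\nb f|_H^2$ against $h$ and uses that $u\mapsto\int_E(\nb u,\nb f)_Hh\,d\mu$ is a bounded linear functional on $\D$. Only cosmetic blemishes: in your counterexample the left-hand side is $\pi^2/2$ rather than $1/2$, and you use only that $h$ is nonnegative, bounded, and measurable, which is weaker than the stated $h\in\D$ and therefore harmless.
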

\begin{lemma}[{cf.\ \cite[Lemma 2.5]{HR}}]\label{lem:meanlem} 
Let $T>0$ and suppose
that $f(t,x)=f_{t}(x)$ is a bounded jointly measurable function for $(t,x)\in \left(0,T \right] \times E$. Also suppose that $f_{t} \in\D$ for each $t \in \left(0,T\right]$ and  that
$ \int_{0}^{T} \cE^0(f_t)\,dt<\infty$.
Writing 
\[
\bar{f}_{T}=\frac{1}{T} \int_{0}^{T}f_{t}\,dt,
\]
we have $\bar{f}_{T} \in\D$, and the following is true for any nonnegative $h\in \D_{b}$:
\[
\int_{E}  |\nb \bar{f}_{T}|_{H}^2 h\,d\mu \le \frac{1}{T} \int_{0}^{T} \int_{E} |\nb f_{t}|_{H}^2 h\,d\mu \,dt.
\]
\end{lemma}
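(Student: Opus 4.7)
The claim is a form of Jensen's inequality for the non-negative quadratic form $v\mapsto B_h(v):=\int_E|\nb v|_H^2 h\,d\mu$ on $\D$, arising from the symmetric non-negative bilinear form $B_h(u,v):=\int_E(\nb u,\nb v)_H h\,d\mu$. My plan is to reduce to the discrete case via Cauchy--Schwarz applied to $B_h$, approximate $\bar f_T$ by time-shifted Riemann sums, and pass to the limit by weak compactness in $\D$ together with weak lower semicontinuity of $B_h$.

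Since $B_h$ is symmetric and non-negative, Cauchy--Schwarz gives $|B_h(u,v)|\le B_h(u)^{1/2}B_h(v)^{1/2}$; for $f_1,\dots,f_n\in\D$ and $\a_1,\dots,\a_n\ge0$ with $\sum_i\a_i=1$, expanding yields the discrete Jensen inequality
\[
B_h\Bigl(\sum_i\a_i f_i\Bigr)=\sum_{i,j}\a_i\a_j B_h(f_i,f_j)\le \Bigl(\sum_i\a_i B_h(f_i)^{1/2}\Bigr)^{\!2}\le \sum_i\a_i B_h(f_i).
\]
To approximate $\bar f_T$, for $n\in\N$ and $\tau\in(0,T/n]$ set $g_n^\tau=\frac1n\sum_{i=0}^{n-1}f_{\tau+iT/n}\in\D$; the discrete inequality gives $B_h(g_n^\tau)\le\frac1n\sum_{i=0}^{n-1}B_h(f_{\tau+iT/n})$, and integrating in $\tau$ and using Fubini,
\[
\int_0^{T/n}B_h(g_n^\tau)\,d\tau\le\frac1n\int_0^T B_h(f_t)\,dt,
\]
whose right-hand side is finite because $h$ is bounded and $\int_0^T\cE^0(f_t)\,dt<\infty$. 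By the mean-value principle we may select $\tau_n\in(0,T/n]$ so that $B_h(g_n)\le\frac1T\int_0^T B_h(f_t)\,dt$ with $g_n:=g_n^{\tau_n}$; refining the choice by the same averaging applied to $\cE^0$ and to the $L^2$-norm against a localizing sequence $h_k\in\D$ dominating $\bone_{E_k}$ (and using the uniform $L^\infty$-bound on $f_t$), we arrange additionally that $\{g_n\}$ is bounded in $\D$.

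Extracting a weakly convergent subsequence $g_{n_j}\rightharpoonup g$ in $\D$, I would identify $g=\bar f_T$ by verifying that $g_n\to\bar f_T$ in $L^2(\mu)$ against a localization, through a Fubini-plus-dominated-convergence argument analogous to that used in the proof of Lemma~\ref{lem:7}; this yields $\bar f_T\in\D$. Finally, by weak lower semicontinuity of the non-negative quadratic form $B_h$ on $\D$ (the standard Hilbert-space fact, essentially the content of Lemma~\ref{lem:wklem}),
\[
\int_E |\nb\bar f_T|_H^2\, h\,d\mu \le \varliminf_{j\to\infty} B_h(g_{n_j}) \le \frac1T\int_0^T B_h(f_t)\,dt,
\]
which is the claim. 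The main obstacle is the identification of the weak limit with $\bar f_T$: since $t\mapsto f_t$ is only jointly measurable and not assumed to be strongly continuous in $t$, convergence of Riemann sums to $\bar f_T$ cannot be taken for granted, and the shifted-partition device together with a careful measurable selection of $\tau_n$ along the measurable nest is what makes the argument work without imposing additional regularity on $f$.
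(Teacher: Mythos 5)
The paper itself offers no proof of this lemma: it is imported wholesale from \cite[Lemma~2.5]{HR} (hence the ``cf.''), so your argument has to stand on its own, and as written it is a plan rather than a proof. The Jensen inequality for the nonnegative form $B_h$ and the weak lower semicontinuity step at the end are fine, but the middle of the argument has genuine gaps. First, the selection of $\tau_n$ is not legitimate as stated: the mean-value principle gives, for each \emph{single} functional, a positive-measure set of shifts at which it does not exceed its average, but you cannot in general find one $\tau_n$ at which $B_h(g_n^\tau)$, $\cE^0(g_n^\tau)$ and your localized $L^2$-deviations are simultaneously at or below their respective averages. You need a Chebyshev-type selection, paying a factor $1+\eps$ on the $B_h$-bound and large constants on the auxiliary bounds, with $\eps\to0$ only at the very end; moreover this selection presupposes measurability in $\tau$ of $\tau\mapsto B_h(g_n^\tau)$ and $\tau\mapsto\cE^0(g_n^\tau)$, which does not follow from the (implicitly assumed) measurability of $t\mapsto\cE^0(f_t)$ and $t\mapsto\int_E|\nb f_t|_H^2h\,d\mu$, since the expansion of $g_n^\tau$ produces cross terms $\int_E(\nb f_{\tau+iT/n},\nb f_{\tau+jT/n})_Hh\,d\mu$ that are new objects; this needs an argument (e.g.\ a monotone-class reduction), not just Fubini.

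Second, the step you yourself call the main obstacle --- identifying the weak limit with $\bar f_T$ --- is only announced (``I would identify\dots''). What is actually needed is that the shift-averaged local deviation $\frac nT\int_0^{T/n}\int_{E_k}|g_n^\tau-\bar f_T|^2\,d\mu\,d\tau$ tends to $0$ (true: for fixed $x$ the shift-averaging operators are contractions on $L^1((0,T])$ in the time variable and converge on continuous functions, and a dominated-convergence argument in $x$ over sets of finite measure finishes it), \emph{and} that this deviation functional be built into the Chebyshev selection of $\tau_n$; neither is carried out. Finally, extracting a weakly convergent subsequence requires $\{g_n\}$ bounded in $\D$, hence in $L^2(\mu)$; the hypotheses give only a sup-norm bound and $\int_0^T\cE^0(f_t)\,dt<\infty$, which do not control $\|g_n\|_2$ when $\mu(E)=\infty$ --- your parenthetical appeal to the uniform $L^\infty$-bound does not do this. (Indeed, even the conclusion $\bar f_T\in L^2(\mu)$ needs some localization of the $f_t$'s of the kind present in every application the paper makes; on an infinite-measure space the stated hypotheses alone do not supply it.) So the skeleton is workable, but the shift selection, the measurability it requires, the convergence of the shifted Riemann sums to $\bar f_T$, and the $L^2$-boundedness all remain to be supplied.
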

\subsection{Rough estimates}
In the following, $\Gm(f,g)$ and $\Gm(f)$ denote $(\nb f,\nb g)_{H}$ and $(\nb f, \nb f)_{H}$, respectively. 
\begin{lemma}\label{lem:lem3.2.1}
$u_{t}^{\dl}-e_{t}^{\dl} \in \D$ and 
\begin{align}
(\rho, \del_{t}\Phi_{t}^{\dl} )_\mu&=\frac{1}{t}(\rho,\Psi_{t}^{\dl} )_\mu- \cE^{0} \bigl( ((\phi')_{t}^{\dl})^{2}\rho, u_{t}^{\dl}-e_{t}^{\dl}\bigr) 
-\frac{1}{2t} \int_E \Gm ( u_{t}^{\dl}-e_{t}^{\dl})((\phi')_{t}^{\dl})^{2}\rho\,d\mu \nonumber\\
&\qad -\int_E \bigl(b,\nb (u_{t}^{\dl}-e_{t}^{\dl}  ) \bigr)_{H}((\phi')_{t}^{\dl})^{2}\rho\,d\mu 
+t \int_E (c, \nb  \rho )_{H} \frac{ ((\phi')_{t}^{\dl})^{2}  T_{t}f}{T_{t}f+\dl}\,d\mu \notag \\	
&\qad + \int_E \left(c, \nb \bigl(u_t^{\delta}-e_t^{\delta} \bigr)\right)_{H} \left\{ 2t (\phi')_t^\delta (\phi'')_t^\delta +((\phi')_{t}^{\dl})^{2} \right\} \frac{ \rho T_{t}f}{T_{t}f+\dl}\,d\mu\notag\\
&\qad + t\int_E V\frac{((\phi')_{t}^{\dl})^{2}\rho T_t f}{T_t f+\dl}\,d\mu \label{identitypde}
\end{align}
for $\rho \in \bigcup_{k=1}^\infty\D_{E_k,b}$.
\end{lemma}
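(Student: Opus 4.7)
\medskip
\noindent\textbf{Proof plan.}
The identity is essentially the chain-rule computation $\partial_t\Phi(u_t^\dl)=(\phi'(u_t^\dl))^2\,\partial_t u_t^\dl$ combined with the definition of the generator. My plan is to first establish membership in $\D$, then differentiate and transfer derivatives via the bilinear form.

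First, I would show $u_t^\dl-e_t^\dl\in\D$. Since $(\cE_\theta,\D)$ is a coercive closed form, $\{T_t^{(\theta)}\}$ (and hence $\{T_t\}$) is an analytic semigroup on $L^2(\mu)$, so $T_tf\in\Dom(\cL)\subset\D$ for $t>0$. The map $x\mapsto -t\log\bigl(1+(x\vee0)/\dl\bigr)$ can be smoothly extended to a $C^1$ function $F$ on $\R$ with $F(0)=0$ and bounded derivative; since $T_tf\ge0$ $\mu$-a.e., the chain rule \eqref{eq:chain} yields $u_t^\dl-e_t^\dl=F(T_tf)\in\D$, with $\nb(u_t^\dl-e_t^\dl)=-t\,\nb(T_tf)/(T_tf+\dl)$ $\mu$-a.e.\ (equivalently, $\nb T_tf=-(T_tf+\dl)\nb(u_t^\dl-e_t^\dl)/t$).

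Next I would differentiate in $t$. From $\partial_t T_tf=\cL T_tf$ (in $L^2(\mu)$, by analyticity), we have formally
\[
\partial_t u_t^\dl=\frac{u_t^\dl}{t}-\frac{t\,\cL T_tf}{T_tf+\dl},
\qquad
\partial_t\Phi_t^\dl=(\phi')_t^{\dl,2}\,\partial_t u_t^\dl.
\]
Pairing against $\rho$ and using $x(\phi'(x))^2=\Psi(x)$ formally gives
\[
(\rho,\partial_t\Phi_t^\dl)_\mu=\frac1t(\rho,\Psi_t^\dl)_\mu+\cE(T_tf,g),\qquad g:=\frac{t\,\rho\,(\phi')_t^{\dl,2}}{T_tf+\dl},
\]
provided $g\in\D$, since then $\cE(T_tf,g)=-(g,\cL T_tf)_\mu$. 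To see $g\in\D$, note $\rho$ is bounded and supported in some $E_k$, $(\phi')^2$ is a bounded $C^1$ function of $u_t^\dl-e_t^\dl\in\D_b$, and $1/(T_tf+\dl)-1/\dl$ is also obtained from $T_tf\in\D$ via a $C^1$ function vanishing at $0$ with bounded derivative; combining these by a truncated Leibniz argument (applied in $\D_{E_k,b}$) places $g$ in $\D_{E_k,b}$.

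Finally, I would expand $\cE(T_tf,g)$ using \eqref{eq:cE} and compute $\nb g$ by the derivation property \eqref{eq:chain}:
\[
\nb g=t\,\frac{(\phi')_t^{\dl,2}}{T_tf+\dl}\,\nb\rho+\rho\,\frac{2t(\phi')_t^\dl(\phi'')_t^\dl}{T_tf+\dl}\,\nb(u_t^\dl-e_t^\dl)+\frac{\rho\,(\phi')_t^{\dl,2}}{T_tf+\dl}\,\nb(u_t^\dl-e_t^\dl).
\]
Substituting the gradient relation for $\nb T_tf$ into the $\cE^0$-piece and into the $(b,\nb T_tf)_Hg$ piece, and substituting $\nb g$ above into the $(c,\nb g)_H T_tf$ piece, produces exactly the five integrals on the right side of \eqref{identitypde}; the $\cE^0$-piece splits as $-\cE^0((\phi')_t^{\dl,2}\rho,u_t^\dl-e_t^\dl)-\frac1{2t}\int\Gm(u_t^\dl-e_t^\dl)(\phi')_t^{\dl,2}\rho\,d\mu$ after using the Leibniz expansion of $\nb((\phi')_t^{\dl,2}\rho)$, while the $V$-term and the two $c$-terms (arising from the $2\phi'\phi''$ and the $(\phi')^2$ contributions in $\nb g$) appear directly.

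The main obstacle is the technical verification that $g\in\D$ and the justification of pointwise/$L^2$ differentiation in $t$ under the integral sign; everything else is a bookkeeping calculation. To handle this cleanly, I would argue by approximation, replacing $1/(T_tf+\dl)$ and $(\phi')^2(u_t^\dl)$ by suitable cutoffs (using the fact that $T_tf\in\D$ is $L^2$ but not a priori bounded), applying the chain rule at each stage, and passing to the limit using dominated convergence together with the uniform bound $(\phi')^2\le1$ and $1/(T_tf+\dl)\le1/\dl$ on the support of $\rho$.
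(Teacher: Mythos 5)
Your proposal is correct and follows essentially the same route as the paper: membership $u_t^\dl-e_t^\dl\in\D$ via composition with a $C^1$ function of bounded derivative vanishing at $0$, the generator--form duality $\cE(T_tf,g)=-(g,\cL T_tf)_\mu$ applied to a test function of the form $\rho(\phi')^2_t{}^{\,\dl}/(T_tf+\dl)$ (whose membership in $\D$ is justified by the same truncated Leibniz/chain-rule arguments), and the substitution $\nb T_tf=-(T_tf+\dl)\nb(u_t^\dl-e_t^\dl)/t$ to produce each term of \eqref{identitypde}. The only difference is organizational: the paper first derives the identity for $(\rho,\del_t u_t^\dl)_\mu$ with test function $\rho/(T_tf+\dl)$ and then replaces $\rho$ by $((\phi')_t^\dl)^2\rho$, whereas you pair with the full test function at once --- the algebra is the same.
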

\begin{proof}
Let $\Xi(s)=\log (s+\dl)-\log \dl$ for $s\in[0,\infty)$.
From $\Xi(0)=0$, the boundedness of the derivative of $\Xi$ on $[0,\infty)$, and
$u_{t}^{\dl}-e_{t}^{\dl}=-t \Xi(T_{t}f)$, we conclude that $u_{t}^{\dl}-e_{t}^{\dl} \in \D$.

We prove the identity \eqref{identitypde}. 
First, 
\begin{align}
(\rho, \del_{t}u_{t}^{\dl})_\mu= \frac{1}{t}(\rho,u_{t}^{\dl})_\mu-\left(\rho, \frac{t \cL T_{t}f}{T_{t}f+\dl} \right)_\mu
=\frac{1}{t}(\rho,u_{t}^{\dl})_\mu+t \cE \left(T_{t}f,\frac{\rho}{T_{t}f+\dl}  \right)\label{eq:pde1}
\end{align}
and
\begin{align}
\cE \bigl(\rho, \Xi(T_{t}f)\bigr) 
&=\frac{1}{2} \int_E \Gm \bigl(\rho, \Xi(T_{t}f) \bigr)\,d\mu +\int_E (b,\nb \rho)_{H} \Xi(T_{t}f) \,d\mu 
+ \int_E \bigl(c, \nb (\Xi(T_{t}f) )  \bigr)_{H} \rho \,d\mu\notag\\
&\qad+\int_E V\rho\Xi(T_t f)\,d\mu. 
\label{eq:pde2}
\end{align}
The first term on the right-hand side is computed as 
\begin{align}
&\frac{1}{2} \int_E \Gm(\rho, \Xi(T_{t}f))\,d\mu\notag\\
&=\frac{1}{2} \int_E \Gm(\rho, T_{t}f)\frac{1}{T_{t}f+\dl} \,d\mu \notag  \\  
&=\frac{1}{2} \int_E \left\{ \Gm \left(\frac{\rho}{T_{t}f+\dl},T_{t}f  \right)+\Gm(T_{t}f) \frac{\rho}{(T_{t}f+\dl)^{2}} \right\} \,d\mu  \notag  \\
&=\cE \left(T_{t}f, \frac{\rho}{T_{t}f+\dl}\right)-\int_E \left(b,\nb T_{t}f\right)_{H} \frac{\rho}{T_{t}f+\dl} \,d\mu 
-\int_E \left(c,\nb \left(\frac{\rho}{T_{t}f+\dl} \right) \right)_{H} T_{t}f \,d\mu\notag\\  
&\qad-\int_E V\frac{\rho T_t f}{T_t f+\dl}\,d\mu
+\frac{1}{2} \int_E \Gm( \Xi(T_{t}f))\rho \,d\mu. 
\label{eq:pde3}
\end{align}
Combining the identities \eqref{eq:pde1}, \eqref{eq:pde2}, and \eqref{eq:pde3}, it holds that
\begin{align*}
(\rho, \del_{t}u_{t}^{\dl})_\mu
&=\frac{1}{t}(\rho,u_{t}^{\dl})_\mu+t \cE \bigl(\rho, \Xi(T_{t}f)\bigr)
+t\int_E (b,\nb T_{t}f)_{H}\frac{\rho}{T_{t}f+\dl} \,d\mu\\
&\qad+t\int_E \left(c,\nb \left(\frac{\rho}{T_{t}f+\dl} \right) \right)_{H}T_{t}f\,d\mu
+t\int_E V\frac{\rho T_t f}{T_t f+\dl}\,d\mu
-\frac{t}{2} \int_E \Gm \bigl( \Xi(T_{t}f) \bigr)\rho \,d\mu\\
&\qad -t\int_E (b,\nb \rho)_{H}  \Xi(T_{t}f) \,d\mu
-t\int_E \bigl(c,\nb ( \Xi(T_{t}f) )\bigr)_{H} \rho \,d\mu 
-t\int_E V \rho \Xi(T_tf)\,d\mu\\
&= \frac{1}{t}(\rho,u_{t}^{\dl})_\mu- \cE^{0} (\rho, u_{t}^{\dl}-e_{t}^{\dl}) -\frac{1}{2t} \int_E \Gm( u_{t}^{\dl}-e_{t}^{\dl})\rho \,d\mu \\
&\qad +t\int_E (b,\nb T_{t}f )_{H}\frac{\rho}{T_{t}f+\dl} \,d\mu
 +t\int_E \left(c,\nb \left(\frac{\rho}{T_{t}f+\dl} \right) \right)_{H}T_{t}f\,d\mu\notag\\
&\qad +t\int_E V\frac{\rho T_t f}{T_t f+\dl}\,d\mu.
\end{align*}
By using the identity 
$(\rho, \del_{t} \Phi_{t}^{\dl})_\mu=\bigl(((\phi')_{t}^{\dl})^{2}\rho,\del_{t}u_{t}^{\dl} \bigr)_\mu$ and replacing $\rho$  with $((\phi')_{t}^{\dl})^{2}\rho$ in the relation above, we obtain 
\begin{align}
\del_{t}(\rho, \Phi_{t}^{\dl})_\mu
&=\frac{1}{t}\bigl(((\phi')_{t}^{\dl})^{2}\rho,u_{t}^{\dl}\bigr)_\mu- \cE^{0} \bigl(  ((\phi')_{t}^{\dl})^{2}\rho, u_{t}^{\dl}-e_{t}^{\dl}\bigr) 
-\frac{1}{2t} \int_E \Gm( u_{t}^{\dl}-e_{t}^{\dl})((\phi')_{t}^{\dl})^{2}\rho \,d\mu \notag \\
&\qad +t\int_E (b,\nb T_{t}f )_{H}\frac{((\phi')_{t}^{\dl})^{2}\rho}{T_{t}f+\dl} \,d\mu 
 +t\int_E \left(c,\nb \left( \frac{((\phi')_{t}^{\dl})^{2}\rho}{T_{t}f+\dl} \right) \right)_{H} T_{t}f \,d\mu\notag\\
 &\qad+t\int_E V\frac{((\phi')_{t}^{\dl})^{2}\rho T_t f}{T_t f+\dl}\,d\mu.
\label{eq:pde4}
\end{align}
Here, we have
\begin{equation}\label{eq:pde6}
t\int_E (b,\nb T_{t}f )_{H}\frac{((\phi')_{t}^{\dl})^{2}\rho}{T_{t}f+\dl} \,d\mu 
=-\int_E \bigl(b,\nb (u_{t}^{\dl}-e_{t}^{\dl})  \bigr)_{H}((\phi')_{t}^{\dl})^{2}\rho \,d\mu
\end{equation}
and
\begin{align}
&t\int_E \left(c,\nb \left( \frac{((\phi')_{t}^{\dl})^{2}\rho}{T_{t}f+\dl} \right) \right)_{H} T_{t}f \,d\mu \notag \\ 
&= t \int_E \bigl(c, \nb (((\phi')_{t}^{\dl})^{2}\rho) \bigr)_{H} \frac{T_{t}f}{T_{t}f+\dl} \,d\mu  + \int_E \bigl(c , \nb (u_{t}^{\dl}-e_{t}^{\dl}) \bigr)_{H} \frac{ ((\phi')_{t}^{\dl})^{2} \rho T_{t}f}{T_{t}f+\dl }\, d\mu \notag \\
&=t \int_E (c, \nb  \rho )_{H} \frac{ ((\phi')_{t}^{\dl})^{2}  T_{t}f}{T_{t}f+\dl}\,d\mu + \int_E \left(c, \nb \bigl(u_t^{\dl}-e_t^{\dl} \bigr)\right)_{H} \left\{ 2t (\phi')_t^\dl (\phi'')_t^\dl +((\phi')_{t}^{\dl})^{2} \right\} \frac{ \rho T_{t}f}{T_{t}f+\dl}\,d\mu.
\label{eq:pde5}
\end{align}
Substituting \eqref{eq:pde6} and \eqref{eq:pde5} for \eqref{eq:pde4}, we obtain \eqref{identitypde}.
\end{proof}
\begin{lemma}\label{lem:R}
For $k\in\N$ and $\rho\in \D_{E_k,b}$ 
with $0\le \rho\le 1$ $\mu$-a.e.,
\begin{align*}
&\left|(\rho, \del_{t}\Phi_{t}^{\dl} )_\mu-\frac{1}{t}(\rho,\Psi_{t}^{\dl} )_\mu+ \cE^{0} \bigl( ((\phi')_{t}^{\dl})^{2}\rho, u_{t}^{\dl}-e_{t}^{\dl}\bigr) 
+\frac{1}{2t} \int_E \Gm ( u_{t}^{\dl}-e_{t}^{\dl})((\phi')_{t}^{\dl})^{2}\rho\,d\mu\right|\\
&\le \left(\frac{C t}{K}+1\right)\int_E \Gm ( u_{t}^{\dl}-e_{t}^{\dl})((\phi')_{t}^{\dl})^{2}\rho\,d\mu+t\cE^0(\rho)+R(t,k),
\end{align*}
where we define
\[
R(t,k):= \frac12\int_{E_k}|b|_H^2\,d\mu+\left(\left(\frac{C}{K}+\frac12\right)t+\frac12\right)\int_{E_k}|c|_H^2\,d\mu+t\int_{E_k}|V|\,d\mu.
\]
\end{lemma}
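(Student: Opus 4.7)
The plan is to subtract the identity \eqref{identitypde} of Lemma~\ref{lem:lem3.2.1} from the left-hand side, so that the expression inside the absolute value equals exactly the four ``perturbation'' terms involving $b$, $c$ (twice), and $V$. Then I will bound each one separately by a Young-type inequality $|xy|\le \frac{1}{2}x^2+\frac{1}{2}y^2$, using only the elementary pointwise facts that $\rho\le \bone_{E_k}$, $(\phi')^2\le 1$ (from (C.1)), $T_tf/(T_tf+\dl)\le 1$, and $|\phi''|\le CK^{-1}\phi'$ (from (C.2)). After the splits, coefficients must be tracked and added so that the gradient-of-$(u_t^\dl-e_t^\dl)$ contributions combine to exactly $(Ct/K+1)$ and the remainder matches $R(t,k)$.

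In more detail, the $b$-term is bounded by
\[
\left|\int_E (b,\nb(u_t^\dl-e_t^\dl))_H (\phi')_t^\dl{}^2\rho\,d\mu\right|
\le \tfrac{1}{2}\int_{E_k}|b|_H^2\,d\mu+\tfrac{1}{2}\int_E \Gm(u_t^\dl-e_t^\dl)(\phi')_t^\dl{}^2\rho\,d\mu,
\]
using $(\phi')^2\rho\le\rho\bone_{E_k}$. For the $t\int(c,\nb\rho)_H\dots$ term, since $(\phi')^2 T_tf/(T_tf+\dl)\le 1$ and $\nb\rho$ is supported on $E_k$, Young gives $\tfrac{t}{2}\int_{E_k}|c|_H^2\,d\mu+t\cE^0(\rho)$. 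For the remaining $c$-term, (C.2) yields $|2t(\phi')(\phi'')+(\phi')^2|\le(2Ct/K+1)(\phi')^2$, and then one Young estimate produces
\[
(2Ct/K+1)\cdot\tfrac{1}{2}\Bigl\{\int_{E_k}|c|_H^2\,d\mu+\int_E\Gm(u_t^\dl-e_t^\dl)(\phi')_t^\dl{}^2\rho\,d\mu\Bigr\}.
\]
Finally, the $V$-term is dominated directly by $t\int_{E_k}|V|\,d\mu$. Summing the gradient coefficients gives $\tfrac{1}{2}+(Ct/K+\tfrac{1}{2})=Ct/K+1$, and summing the $|c|_H^2$ coefficients gives $\tfrac{t}{2}+(Ct/K+\tfrac{1}{2})=(C/K+\tfrac{1}{2})t+\tfrac{1}{2}$, matching $R(t,k)$ exactly.

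There is no deep obstacle; the proof is a bookkeeping exercise. The only delicate point is to split the mixed factor $2t\phi'\phi''+(\phi')^2$ before applying Young so that the two halves ($\tfrac{1}{2}$ for the gradient, $Ct/K+\tfrac{1}{2}$ for $|c|_H^2$) come out with the right coefficients, and to be careful that every integrand that one wants to move to $R(t,k)$ is supported in $E_k$ (which is ensured because $\rho\in\D_{E_k,b}$ forces both $\rho$ and $\nb\rho$ to vanish off $E_k$).
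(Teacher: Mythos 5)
Your proposal is correct and follows essentially the same route as the paper: subtract the identity \eqref{identitypde} of Lemma~\ref{lem:lem3.2.1}, then bound the four perturbation terms by Young's inequality using (C.1), (C.2), $\rho\le\bone_{E_k}$, $T_tf/(T_tf+\dl)\le1$, and Proposition~\ref{prop:D} for the support of $\nb\rho$, with exactly the coefficient bookkeeping the paper carries out.
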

\begin{proof}
Using Proposition~\ref{prop:D}, (C.1), and (C.2), we have
\begin{align*}
&\left| \int_E \bigl(b,\nb (u_{t}^{\dl}-e_{t}^{\dl} ) \bigr)_{H}\bigl((\phi')_{t}^{\dl}\bigr)^{2}\rho\,d\mu \right| \le \frac{1}{2}\int_{E_k} |b|^{2}_{H} \,d\mu +\frac{1}{2}\int_{E}\Gm(u_{t}^{\dl}-e_{t}^{\dl}) ((\phi')_t^\dl)^2 \rho \,d\mu
,\\ 
&  \left| \int_E (c, \nb \rho )_{H} \frac{ ((\phi')_{t}^{\dl})^{2}  T_{t}f}{T_{t}f+\dl}\,d\mu \right|  \le \frac{1}{2}\int_{E_k}  |c|^2_{H} \,d\mu+\cE^{0}(\rho), \\
& \left| \int_E \left(c, \nb \bigl(u_t^{\delta}-e_t^{\delta} \bigr)\right)_{H} \left\{ 2t (\phi')_t^\delta (\phi'')_t^\delta +((\phi')_{t}^{\dl})^{2} \right\} \frac{ \rho T_{t}f}{T_{t}f+\dl}\,d\mu \right| \\
&\quad \le \left( \frac{Ct}{K}+\frac{1}{2} \right) \int_{E_k}|c|^2_{H}\,d\mu+\left(\frac{Ct}{K}+\frac{1}{2}\right)\int_{E}\Gm(u_{t}^{\dl}-e_{t}^{\dl}) ((\phi')_t^\dl)^2 \rho \,d \mu, \\
&  \left| \int_E \frac{((\phi')_{t}^{\dl})^{2}\rho\,T_{t}f}{T_{t}f+\dl}\,Vd\mu \right| \le \int_{E_k} |V|\,d\mu.
\end{align*}
These estimates and Lemma~\ref{lem:lem3.2.1} together imply the claim.
\end{proof}
\begin{prop}
There exists a measurable nest $\{\hat E_k\}_{k=1}^\infty$ and functions $\{\chi_k\}_{k=1}^\infty$ in $\D$ such that, for every $k$, $\hat E_k\subset E_k$, $0\le \chi_k\le 1$ on $E$, $\chi_k=1$ on $\hat E_k$, and $\chi_k=0$ on $E\setminus E_k$.
\end{prop}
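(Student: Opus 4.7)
The plan is to apply a smooth truncation to suitable $\D_{E_k}$-approximations of $h_k\wg 1$ and to take $\hat E_k$ as the level set $\{\chi_k=1\}$. First, by the density of $\bigcup_j\D_{E_j}$ in $\D$, I inductively choose a strictly increasing sequence $\{n_k\}$ and functions $g_k\in\D_{E_{n_k}}$ with $0\le g_k\le 1$ (after the Markovian truncation $0\vee\cdot\wg 1$, which preserves membership in $\D_{E_{n_k}}$) satisfying $\|g_k-(h_{n_k}\wg 1)\|_{\cE^0_1}\le 2^{-k}$. Passing to the subnest $\{E_{n_k}\}$ (still a measurable nest, since $\bigcup_k\D_{E_{n_k}}=\bigcup_j\D_{E_j}$, and still compatible with Assumption~\ref{as:1}) and relabeling, I may assume $g_k\in\D_{E_k}$ with the same estimate, noting $h_k\wg 1\equiv 1$ on $E_k$.

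Next, I fix a $C^1$ function $F\colon\R\to[0,1]$ with $F(0)=0$, $F\equiv 0$ on $(-\infty,1/4]$, $F\equiv 1$ on $[1/2,\infty)$, and $F'$ bounded, and set $\chi_k^{(0)}:=F(g_k)$. The derivation property \eqref{eq:chain} together with $F(0)=0$ gives $\chi_k^{(0)}\in\D_{E_k}$ with $0\le\chi_k^{(0)}\le 1$. To force monotonicity I redefine $\chi_k:=\bigvee_{j\le k}\chi_j^{(0)}$, which remains in $\D_{E_k}$ because the unit-contraction property of Dirichlet forms permits pointwise maxima and $\chi_j^{(0)}\in\D_{E_j}\subset\D_{E_k}$ for $j\le k$. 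Setting $\hat E_k:=\{\chi_k=1\}$, the inclusions $\hat E_k\subset\hat E_{k+1}$, $\hat E_k\subset E_k$, the identity $\chi_k\equiv 1$ on $\hat E_k$, and $\chi_k\equiv 0$ on $E\setminus E_k$ are then immediate.

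It remains to verify that $\{\hat E_k\}$ is a measurable nest. Condition (i) is clear with $\hat h_k:=\chi_k$. For the density condition (ii), two quantitative ingredients will be central: Chebyshev gives $\mu(E_k\cap\{g_k<1/2\})\le 4\cdot 4^{-k}$, so Borel--Cantelli yields $\bigcup_k\hat E_k=E$ up to a $\mu$-null set; and since $h_k\wg 1$ is constant on $E_k$, Proposition~\ref{prop:D} forces $\nb(h_k\wg 1)=0$ on $E_k$, whence $\|\nb g_k\|_{L^2(E_k;H)}\le 2^{-k}$ and hence $\nb\chi_k=F'(g_k)\nb g_k$ is small in $L^2$. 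The main obstacle I expect is upgrading this into the density of $\bigcup_k\D_{\hat E_k}$, since the natural approximant $f\chi_k$ lies only in $\D_{E_k}$, not in $\D_{\hat E_k}$. To bridge this I plan to refine the first stage so that $\chi_k\equiv 1$ on a neighbourhood of $\mathrm{supp}(\chi_{k-1})$ (by enlarging $g_k$ on that support before truncating); then $\chi_{k-1}\in\D_{\hat E_k}$, and the two-cutoff product $f\chi_{k-1}$ lands in $\D_{\hat E_k}$ and still converges to $f$ in $\cE^0_1$-norm via the chain rule $\nb(f\chi_{k-1})=\chi_{k-1}\nb f+f\nb\chi_{k-1}$.
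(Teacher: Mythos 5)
Your overall architecture (approximate $h_k\wg 1$ in $\cE^0_1$-norm, compose with a cutoff function to get $\chi_k$, set $\hat E_k=\{\chi_k=1\}$, use Chebyshev and Borel--Cantelli for exhaustion) is close in spirit to the paper's proof, but the step you yourself flag as the main obstacle---density of $\bigcup_k\D_{\hat E_k}$ in $\D$---is exactly where the argument breaks down, and the proposed fix does not work. The state space $(E,\cB,\mu)$ carries no topology, so ``$\chi_k\equiv 1$ on a neighbourhood of $\mathrm{supp}(\chi_{k-1})$'' has no meaning; the only reasonable reading is $\chi_k=1$ $\mu$-a.e.\ on $\{\chi_{k-1}>0\}$ while keeping $\chi_k=0$ off $E_k$. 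Producing this by ``enlarging $g_k$'' would require an element of $\D$, bounded by $1$, vanishing off $E_k$ and bounded below by a positive constant on all of $\{\chi_{k-1}>0\}$ (a set that nearly fills $E_{k-1}$). The nest hypotheses only give $h_k\ge 1$ on $E_k$ with no control outside $E_k$, and indicators such as $\bone_{\{\chi_{k-1}>0\}}$ are not in $\D$ in general; a cutoff of this kind is essentially what the proposition is asserting exists, so the step is circular. There is also a smaller circularity at the start: density of $\bigcup_j\D_{E_j}$ gives an approximant of $h_{n_k}\wg 1$ lying in some $\D_{E_j}$ whose index $j$ you cannot force to be $n_k$, so ``$g_k\in\D_{E_{n_k}}$ with $\|g_k-(h_{n_k}\wg 1)\|_{\cE^0_1}\le 2^{-k}$'' cannot be arranged as stated.

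The paper sidesteps the need for any cutoff supported in the good set by a subtraction trick: to approximate $f\in\D_{E_n,b}$ it uses $f_k=f(1-\rho_k)$ with $\rho_k=0\vee\bigl(2(h_k-g_k)\wg 1\bigr)$, which has $\cE^0_1(\rho_k)\le 4/k$ and equals $1$ on the bad set $E_k\setminus Y_k$ (where $Y_k=E_k\cap\{g_k\ge 1/2\}$); hence $f_k$ vanishes off $Y_k$ automatically, and density follows from $\|f-f_k\|_2\to 0$ together with boundedness of $\cE^0(f-f_k)$ and a Ces\`aro-mean argument, with a final re-indexing giving $\hat E_k\subset E_k$ and $\chi_k\in\D_{E_k}$. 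This device works verbatim inside your construction, since $E_k\setminus\hat E_k\subset E_k\cap\{g_k<1/2\}\subset\{(h_k\wg 1)-g_k>1/2\}$, and adopting it would repair the proof. Note also that your claimed norm convergence $f\chi_{k-1}\to f$ is not justified as written: $\|f\nb\chi_{k-1}\|_2$ is only bounded, not obviously small, so you would either need the weak-convergence/Ces\`aro argument as in the paper, or an extra argument using $\nb\chi_{k-1}=0$ $\mu$-a.e.\ on $\hat E_{k-1}$ (Proposition~\ref{prop:D}) together with domination by the $L^2$-convergent series $\sum_j|\nb\chi_j^{(0)}|_H$.
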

\begin{proof}
For $k\in\N$, there exists an $h_k\in\D$ such that $h_k\ge1$ on $E_k$. 
Take a function $g_k$ from $\bigcup_{n=1}^\infty \D_{E_n}$ such that $\cE^0_1(h_k-g_k)\le 1/k$, and set $Y_k=\{x\in E_k\mid g_k(x)\ge1/2\}$.
Let $\rho_k$ denote $0\vee \bigl(2(h_k-g_k) \wg 1\bigr)$. Then, $\cE^0_1(\rho_k)\le 4/k$ and $\rho_k=1$ on $E_k\setminus Y_k$.
We prove that $\bigcup_{k=1}^\infty \D_{Y_k}$ is dense in $\D$.
Take $f\in \D_{E_n,b}$ for some $n\in\N$. For $k\ge n$, let $f_k=f-f \rho_k$.
Then, $f_k=0$ $\mu$-a.e.\ on $(E\setminus E_n)\cup (E_k\setminus Y_k)\supset E\setminus Y_k$. It follows that $f_k\in \D_{Y_k}$. Moreover, since
\[
\cE^0(f-f_k)^{1/2}=\cE^0(f \rho_k)^{1/2}\le \|f\|_\infty \cE^0(\rho_k)^{1/2}+\cE^0(f)^{1/2}
\le \|f\|_\infty(4/k)^{1/2}+\cE^0(f)^{1/2},
\]
which is bounded in $k$, and
\[
\|f-f_k\|_2\le \|f\|_\infty \|\rho_k\|_2 \le \|f\|_\infty(4/k)^{1/2}\to 0 \quad(k\to\infty),
\]
the Ces\`aro means of a certain subsequence of $\{f_k\}_{k=n}^\infty$ belonging to $\bigcup_{k=n}^\infty \D_{Y_k}$ converge to $f$ in $\D$.
Thus $\bigcup_{k=1}^\infty \D_{Y_k}$ is dense in $\D$ because $\bigcup_{n=1}^\infty \D_{E_n,b}$ is dense in $\D$.

Let $Z_n=\bigcup_{k=1}^n Y_k$ and define $\eta_n=0\vee\bigl(2(g_1\vee\dots\vee g_n)\wg1\bigr)$. Then, $Z_n\subset E_n$ and $\bigcup_{n=1}^\infty \D_{Z_n}$ is dense in $\D$. Moreover, $0\le \eta_n\le1$ on $E$, $\eta_n=1$ on $Z_n$ and $\eta_n\in\bigcup_{k=1}^\infty\D_{E_k}$.
Take a strictly increasing sequence $\{m(n)\}_{n=1}^\infty$ such that $\eta_n\in \D_{E_{m(n)}}$ for every $n\in\N$ and define
\begin{align*}
& \hat E_k=\emptyset,\ \chi_k=0\quad \text{for }1\le k< m(1),\\
& \hat E_k=Z_{n},\ \chi_k=\eta_n\quad \text{for }m(n)\le k<m(n+1),\ n=1,2,\dots. 
\end{align*}
Then, $\{\hat E_k\}_{k=1}^\infty$ and $\{\chi_k\}_{k=1}^\infty$ satisfy the required conditions.
\end{proof}
Define $u_{t}^{\dl}=-t \log \left( T_{t}\bone_{B}+\dl \right)$ for $B \in \cB_0$. 
\begin{lemma} \label{lem:bddlem}
There exists a positive constant $T_0$, depending only on $C$ and $K$, such that both $\left\{ \cE^{0}( \bar{\phi}_{t}^{\dl}\chi_k) \right\}_{ 0<t \le T_{0}, \,0< \dl \le 1 }$ and $\left\{ \cE^{0}(\bar{\Phi}_{t}^{\dl}\chi_k) \right\}_{ 0<t \le T_{0},\,0< \dl \le 1 }$ are bounded for each $k$.
\end{lemma}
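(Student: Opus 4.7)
The strategy is to reduce both $\cE^0$-bounds to a single estimate $\int_0^t \int_E \chi_k^2 ((\phi')_s^\dl)^2 \Gm(u_s^\dl)\,d\mu\,ds = O(t)$ (uniformly in $\dl\in(0,1]$ and $t\in(0,T_0]$), and then to establish this by applying Lemma~\ref{lem:R} with $\rho=\chi_k^2$, multiplying by $4s$, and integrating in time. For the reduction, (C.3) gives $|\phi_t^\dl|,|\Phi_t^\dl|\le LK$, so the Leibniz inequality $|\nb(f\chi_k)|_H^2\le 2\chi_k^2|\nb f|_H^2+2f^2|\nb\chi_k|_H^2$ yields
\[
\cE^0(\bar\phi_t^\dl\chi_k)\le \int_E \chi_k^2|\nb\bar\phi_t^\dl|_H^2\,d\mu + 2(LK)^2\cE^0(\chi_k),
\]
and likewise for $\bar\Phi_t^\dl\chi_k$. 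The functions $\phi_s^\dl-\phi(e_s^\dl)$ lie in $\D$ (apply \eqref{eq:chain} to $x\mapsto\phi(x+e_s^\dl)-\phi(e_s^\dl)$) and $\chi_k^2\in\D$ (with range in $[0,1]$), so Lemma~\ref{lem:meanlem} with $h=\chi_k^2$ gives $\int_E \chi_k^2|\nb\bar\phi_t^\dl|_H^2\,d\mu\le \tfrac{1}{t}\int_0^t \int_E\chi_k^2((\phi')_s^\dl)^2\Gm(u_s^\dl)\,d\mu\,ds$. Because $|\nb\Phi_s^\dl|_H^2 = ((\phi')_s^\dl)^4\Gm(u_s^\dl)\le ((\phi')_s^\dl)^2\Gm(u_s^\dl)$ by (C.1), the same integral also bounds $\int_E \chi_k^2|\nb\bar\Phi_t^\dl|_H^2\,d\mu$.

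Set $J(s):=\int_E((\phi')_s^\dl)^2\chi_k^2\Gm(u_s^\dl-e_s^\dl)\,d\mu$. Applying Lemma~\ref{lem:R} with $\rho=\chi_k^2\in\D_{E_k,b}$ and solving for $J(s)$ gives
\[
\Bigl(\tfrac{1}{2s}-\tfrac{Cs}{K}-1\Bigr)J(s)\le \tfrac{1}{s}(\chi_k^2,\Psi_s^\dl)_\mu - \cE^0(((\phi')_s^\dl)^2\chi_k^2,u_s^\dl-e_s^\dl) - \partial_s(\chi_k^2,\Phi_s^\dl)_\mu + s\cE^0(\chi_k^2) + R(s,k).
\]
Choose $T_0=T_0(C,K)$ so that $\tfrac{1}{2s}-\tfrac{Cs}{K}-1\ge \tfrac{1}{4s}$ for $s\in(0,T_0]$, multiply by $4s$, and integrate over $(0,t]$. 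By (C.3), $4\int_0^t(\chi_k^2,\Psi_s^\dl)_\mu\,ds=O(t)$; integration by parts gives $-4\int_0^t s\,\partial_s(\chi_k^2,\Phi_s^\dl)_\mu\,ds=-4t(\chi_k^2,\Phi_t^\dl)_\mu + 4\int_0^t(\chi_k^2,\Phi_s^\dl)_\mu\,ds=O(t)$; and the $4s^2\cE^0(\chi_k^2)$ and $4sR(s,k)$ contributions are visibly $O(t)$ by Assumption~\ref{as:1}.

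The main obstacle is the remaining cross-term $-4\int_0^t s\cE^0(((\phi')_s^\dl)^2\chi_k^2,u_s^\dl-e_s^\dl)\,ds$, which reintroduces gradients of $u_s^\dl$. The expansion
\[
\nb\bigl(((\phi')_s^\dl)^2\chi_k^2\bigr) = 2(\phi')_s^\dl(\phi'')_s^\dl\chi_k^2\,\nb u_s^\dl + 2((\phi')_s^\dl)^2\chi_k\nb\chi_k
\]
splits it into a diagonal piece dominated by $CK^{-1}J(s)$ via (C.2), and an off-diagonal piece dominated (via Cauchy--Schwarz) by $\eta J(s)+(2\eta)^{-1}\cE^0(\chi_k)$ for any $\eta>0$. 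After multiplying by $4s$ and integrating, the total $J$-contribution is at most $4t(CK^{-1}+\eta)\int_0^t J(s)\,ds$; shrinking $T_0$ further so that $4T_0(CK^{-1}+\eta)\le 1/2$ absorbs this term into the left-hand side and produces $\int_0^t J(s)\,ds\le C(k)t$ uniformly in $\dl\in(0,1]$ and $t\in(0,T_0]$. Combined with the reduction step, this establishes both $\cE^0$-bounds. The crucial point is precisely this absorption: only the combination of (C.2) for the diagonal piece, Cauchy--Schwarz for the off-diagonal piece, and the extra factor $s$ picked up from the scaling makes the reintroduced gradient integral controllable by $J$ itself.
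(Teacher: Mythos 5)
Your argument is correct, and it rests on the same pillars as the paper's proof---Lemma~\ref{lem:R} with $\rho=\chi_k^2$, smallness of $T_0$ to absorb the reintroduced gradient terms, integration by parts in time for the $\del_s\Phi_s^\dl$ term, and Lemma~\ref{lem:meanlem}---but the bookkeeping is genuinely different. The paper works pointwise in $t$ with the coupled quantities $U_t^\dl=2\cE^0(\phi_t^\dl\chi_k)$ and $W_t^\dl$ (your $J(t)$): it bounds the off-diagonal term through $\Gm(\chi_k,\chi_k\phi_t^\dl)$ by $8t^2\cE^0(\chi_k)$-type terms plus $\tfrac18 U_t^\dl$, bootstraps to $U_t^\dl\le-16t\,\del_t(\chi_k^2,\Phi_t^\dl)_\mu+\mathrm{const}$, and only then integrates in time; you instead multiply the Lemma~\ref{lem:R} inequality by $4s$ at the outset, absorb both the $(\phi'')$-piece (via (C.2)) and the off-diagonal piece (via Cauchy--Schwarz against $\cE^0(\chi_k)$) into $\int_0^t J(s)\,ds$ using the extra factor $s$, and pass to the energies of the time averages only at the end by a Leibniz split plus Lemma~\ref{lem:meanlem}. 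This streamlining is legitimate, costs only harmless additive constants such as $2(LK)^2\cE^0(\chi_k)$, and gives the same uniformity in $\dl\in(0,1]$ and $t\le T_0$ with $T_0=T_0(C,K)$ (fix $\eta$ to be an absolute constant, say $\eta=1$, so that $T_0$ does not depend on $k$ or $\eta$). Two technical points should be made explicit. First, absorbing $4t(CK^{-1}+\eta)\int_0^tJ(s)\,ds$ into the left-hand side presupposes $\int_0^tJ(s)\,ds<\infty$; run the estimate on $[\eps,t]$ and let $\eps\to0$ (the paper's absorption of the $W_t^\dl$-terms carries the same implicit step). Second, your application of Lemma~\ref{lem:meanlem} with $f_s=\phi_s^\dl-\phi(e_s^\dl)$ and $h=\chi_k^2$ requires the hypothesis $\int_0^t\cE^0(f_s)\,ds<\infty$, i.e.\ the \emph{unweighted} energy over all of $E$, which your $\chi_k^2$-weighted main estimate does not supply; it does hold for fixed $\dl>0$ because $\cE^0(f_s)\le\cE^0(u_s^\dl-e_s^\dl)\le s^2\dl^{-2}\cE^0(T_s\bone_B)$ and $\cE^0(T_s\bone_B)=O(1/s)$ by coercivity and holomorphy of $\{T_t\}_{t>0}$, or you can sidestep the issue entirely by averaging the localized functions $(\phi_s^\dl-\phi(e_s^\dl))\chi_k$ (whose time-integrated energy your estimate does control after the Leibniz split), which is closer to what the paper does with $\phi_s^\dl\chi_k$.
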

\begin{proof}
Let $U_{t}^{\dl}=2 \cE^{0}(\phi_{t}^{\dl}\chi_k)$,
$W_{t}^{\dl}= \int_E\Gm({u_{t}^{\dl}-e_{t}^{\dl}})((\phi')_{t}^{\dl})^{2} \chi_k^{2}\,d\mu$,
 and $a_{k}=2\cE^{0}(\chi_k)$. 
Applying the chain rule~\eqref{eq:chain},
\begin{align}
U_{t}^{\dl}&=\int_E \Gm( \phi_{t}^{\dl}\chi_k, \phi_{t}^{\dl}\chi_k)\,d\mu \notag \\
&=\int_E\Gm({u_{t}^{\dl}-e_{t}^{\dl}}) ((\phi')_{t}^{\dl})^{2} \chi_k^{2}\,d\mu
+2\int_E\Gm({u_{t}^{\dl}-e_{t}^{\dl},\chi_k})\phi_{t}^{\dl}(\phi')_{t}^{\dl}\chi_k\,d\mu
+\int_E\Gm({\chi_k})(\phi_{t}^{\dl})^{2}\,d\mu \notag \\
&\le 2 W_{t}^{\dl}+2K^{2}L^{2}a_{k}\qquad\text{(from (C.3))}. 
\label{eq:eq20}
\end{align}
Letting $\rho=\chi_k^{2}$ in Lemma~\ref{lem:R},
\begin{align*}
W_{t}^{\dl}
&\le -2t(\chi_k^{2},\del_{t}\Phi_{t}^{\dl})_\mu+2(\chi_k^{2},\Psi_{t}^{\dl})_\mu-2t \cE^{0}(((\phi')_{t}^{\dl})^{2}\chi_{k}^{2},u_{t}^{\dl}-e_{t}^{\dl}) \\
&\qad +2t\left(\frac{C t}{K}+1\right)W_{t}^{\dl} +t^2a_k+2t R(t,k).
\end{align*}
From (C.3), $(\chi_k^{2},\Psi_{t}^{\dl})_\mu\le KL$.
Moreover,
\begin{align*}
&-2t \cE^{0}(((\phi')_{t}^{\dl})^{2}\chi_{k}^{2},u_{t}^{\dl}-e_{t}^{\dl}) \\
&=-2t \int_E (\phi')_{t}^{\dl} \Gm(\chi_k,\chi_k\phi_{t}^{\dl})\,d\mu
+2t \int_E (\phi')_{t}^{\dl}\phi_{t}^{\dl} \Gm(\chi_k)\,d\mu  
-2t\int_E (\phi'')_{t}^{\dl}(\phi')_{t}^{\dl}\chi_k^{2} \Gm(u_{t}^{\dl}-e_{t}^{\dl})\,d\mu \\
&\le \int_E \left(8t^2\Gm(\chi_k)+\frac18\Gm(\chi_k\phi_{t}^{\dl})\right)\,d\mu 
+2KLt\int_E \Gm(\chi_k)\,d\mu  \\
&\qad +\frac{2Ct}{K}\int_E ((\phi')_{t}^{\dl})^{2}\chi_k^{2} \Gm(u_{t}^{\dl}-e_{t}^{\dl})\,d\mu \qquad\text{(from (C.1), (C.2), and (C.3) )} \\
&=8t^2 a_k+\frac18 U_{t}^{\dl}+2KLt a_{k}+\frac{2Ct}{K}W_{t}^{\dl}.
\end{align*}
Thus, we have
\begin{align*}
W_t^\dl
&\le -2t (\chi_k^{2},\del_{t}\Phi_{t}^{\dl})_\mu
+(9t^2+2KL t)a_k+U_t^\dl/8
+(2Ct/K+2Ct^2/K+2t)W_t^\dl\\
&\qad +2KL+2tR(t,k).
\end{align*}
Choose $T_{0}>0$ such that $2CT_0/K+2CT_0^{2}/K+2T_0 \le 1/2$. 
Then, for $t\in(0,T_0]$,
\begin{equation}\label{eq:Wt}
W_t^\dl 
\le -4t(\chi_k^{2},\del_{t}\Phi_{t}^{\dl})_\mu
+U_{t}^{\dl}/4 +2C_2,
\end{equation}
where $C_2:=(9T_0^2+2KL T_0)a_k+2KL+2T_0R(T_0,k)$.
By putting this inequality
into \eqref{eq:eq20},
\[
U_{t}^{\dl } 
\le -8t(\chi_k^{2},\del_{t}\Phi_{t}^{\dl})_\mu+U_{t}^{\dl}/2 +4C_2+2K^{2}L^{2} a_{k},
\]
so that
\begin{equation}\label{eq:Ut}
U_{t}^{\dl} \le -16t \partial_{t}(\chi_k^{2}, \Phi_{t}^{\dl })_\mu+2C_3,
\end{equation}
where $C_3=4C_2+2K^{2}L^{2} a_{k}$.
Therefore,
\begin{align*}
 \int_{\eps}^{t}\cE^{0}(\phi_{s}^{\dl}\chi_k)\,ds&=\frac{1}{2}\int_{\eps}^{t}U_{s}^{\dl}\,ds \\
 &\le -8 \int_{\eps}^{t} s\partial_{s}(\chi_k^{2}, \Phi_{s}^{\dl } )_\mu\,ds+C_3(t-\eps) \\
 &=-8s \left. \left(\chi_k^{2}, \Phi_{s}^{\dl } \right)_\mu\right|_{s=\eps}^{s=t}+8\int_{\eps}^{t}(\chi_k^{2}, \Phi_{s}^{\dl } )_\mu\,ds+C_3(t-\eps).
\end{align*}
Letting $\eps \to 0$ and dividing by $t$, 
Lemma~\ref{lem:meanlem} gives that
\[
\cE^{0}(\bar{\phi}_{t}^{\dl}\chi_k)
\le \frac{1}{t} \int_{0}^{t} \cE^{0}(\phi_{s}^{\dl}\chi_k)\,ds \le 16KL+C_3.
\]
Therefore, $\left\{\cE^{0}(\bar{\phi}_{t}^{\dl}\chi_k) \right\}_{0<t \le T_{0} , \,0<\dl \le 1}$ is bounded. Moreover, since
\begin{align*}
&2\cE^{0}(\Phi_{t}^{\dl}\chi_k )\\
&=\int_E \Gm(\Phi_{t}^{\dl} \chi_k,\Phi_{t}^{\dl} \chi_k )\,d\mu \\
&=\int_E \Gm({u_{t}^{\dl}-e_{t}^{\dl}})   ( ( \phi' )_{t}^{\dl} )^{4} \chi_k^{2} \,d\mu
+2\int_E \Gm({u_{t}^{\dl}-e_{t}^{\dl},\chi_{k}}) \Phi_{t}^{\dl} (( \phi')_{t}^{\dl} )^{2} \chi_{k} \,d\mu
+\int_E \Gm({\chi_{k}}) ( \Phi_{t}^{\dl})^{2} \,d\mu \\
&\le2\int_E \Gm({u_{t}^{\dl}-e_{t}^{\dl}}) (( \phi' )_{t}^{\dl} )^{4} \chi_k^{2}\,d\mu+2\int_E \Gm({\chi_k})( \Phi_{t}^{\dl} )^{2} \,d\mu  \\
&\le 2W_{t}^{\dl }+2 K^{2}L^{2}a_{k} \qquad\text{(from (C.3))} \\
&\le -16 t\del_t(\chi_k^2,\Phi_t^\dl)_\mu+C_3+4C_2+2 K^{2}L^{2}a_{k}, \quad\text{(from \eqref{eq:Wt} and \eqref{eq:Ut})}
\end{align*}
we can prove the boundedness of $\left\{ \cE^{0}(\bar{\Phi}_{t}^{\dl}\chi_k) \right\}_{0<t \le T_{0},\,0<\dl \le1 }$ in the same way.
\end{proof}
\subsection{Sharper estimates}
We write $u_t=-t\log T_t\bone_B$, $\ph_t=\ph(u_t)$, $\Phi_t=\Phi(u_t)$, and $\Psi_t=\Psi(u_t)$ for $t>0$.
Since $\bar{\phi}_{t}^{\dl}\chi_k$ converges to 
$\bar{\phi}_{t}\chi_k$ $\mu$-a.e.\ as $\dl \to 0$ and $ \{ \bar{\phi}_{t}^{\dl}\chi_k \}_{0\le t\le T_{0},\, 0<\dl \le 1} $ is bounded in $\D$, we conclude that 
$\bar{\phi}_{t}\chi_k \in \D$ and $\left\{ \bar{\phi}_{t}\chi_k\right\}_{0<t\le T_{0}}$ is bounded in $\D$
 for each $k$ by {\cite[Lemma~2.12]{MR}}.
Using the diagonal argument, for any decreasing sequence $\{t_{n}\} \downarrow 0$, we can find a subsequence $\{t_{n'}\}$ such that, for every $k$, $\bar{\phi}_{t_{n'}}\chi_k$ converges weakly to some $\psi_{k}$ in $\D$. 
Since $\chi_k=1$ on $\hat E_{l}$ when $k\geq l$, it follows that $\psi_{k}=\psi_{l}$ $\mu$-a.e.\ on $\hat E_{l}$ for $k \ge l$. 
Therefore, there exists $\bar{\phi}_{0} \in \D_{\loc,b}(\{\hat E_{k}\})$ such that $\psi_{k}=\bar{\phi}_{0}$ $\mu$-a.e.\ on $\hat E_{k}$ for every $k$.

 We may also assume, by taking a further subsequence if necessary, that there exist $\Phi_{0}, \bar{\Phi}_{0}$, and $\bar{\Psi}_{0}$ in $L^{\infty}(\mu)$ such that $\Phi_{t_{n'}} \to \Phi_{0}, \bar{\Phi}_{t_{n'}} \to \bar{\Phi}_{0}, \bar{\Psi}_{t_{n'}} \to \bar{\Psi}_{0}$ both in 
the weak-$L^{2}(\tilde{\mu})$ sense and in the weak${}^{\ast}$-$L^{\infty}(\mu)$ sense. Here, $\tilde{\mu}$ is an arbitrary fixed \emph{finite} measure on $E$ such that $\tilde{\mu}$ and $\mu$ are mutually absolutely continuous, and $L^{\infty}(\mu)$
 is regarded as the dual space of $L^{1}(\mu)$.
We remark that these functions depend on $K$. Because of this, it is more precise to write $\ph_t^K$ and $\bar\Phi_0^K$ instead of $\ph_t$ and $\bar\Phi_0$.

Define
\[
Z_{t}=\{x \in E \mid T_{t}\bone_{B}(x)-\bone_{B}(x)>1 \}
\]
for $t>0$.
From the Chebyshev inequality,
\[
\mu(Z_{t}) \le \|T_{t}\bone_{B}-\bone_{B}\|_{2}^{2} \to 0 \qquad\text{as }t\to0;
\]
thus
\begin{equation}\label{eq:ZZ}
\frac1t\int_0^t \bone_{Z_s}\,ds \to 0 \quad\text{in $L^1(\mu)$ as }t\to0.
\end{equation}
For $x\in Z_t$, 
\[
\phi_t(x)=\phi(-t\log T_t\bone_B(x))<\ph(-t\log 1)=0.
\]
Since $T_{t}\bone_{B}(x)\le 2$ for $x \in E \setminus Z_{t}$ 
and $\phi(y) \ge y$ for $y \le K$, it holds that
\begin{equation}\label{eq:pp}
\phi_{t}(x)=\phi \left(-t \log T_{t}\bone_{B}(x) \right) \geq \phi(-t \log2) \geq -t\log2
\quad\text{on }E\setminus Z_t.
\end{equation}
Similar inequalities hold for $\Phi_t$ and $\Psi_t$.
\begin{lemma} \label{lem:lemorder}
$\bar{\phi}_{0} \ge \bar{\Phi}_{0} \ge \bar{\Psi}_{0}\ge0$ $\mu$-a.e.\ and ${\Phi}_{0} \ge 0$ $\mu$-a.e.
\end{lemma}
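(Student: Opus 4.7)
The plan is to derive each of the four inequalities from a pointwise lower bound on the relevant quantity ($\Phi_s$, $\Psi_s$, $\phi_s - \Phi_s$, or $\Phi_s - \Psi_s$) of the form $-\eps_s - LK\,\bone_{Z_s}$ with $\eps_s \to 0$ as $s \to 0$, and then to pass to the limit along $\{t_{n'}\}$. The key elementary observations are: on $E \setminus Z_s$ one has $T_s \bone_B \le 2$, hence $u_s \ge -s\log 2$, which is $\ge -K$ once $s \le K/\log 2$; on $\{-K \le u_s \le K\}$ (C.4) forces $\phi_s = \Phi_s = \Psi_s = u_s$; on $\{u_s \ge 0\}$ (C.3) gives $\phi_s \ge \Phi_s \ge \Psi_s \ge 0$; one has the global bound $|\phi_s|,|\Phi_s|,|\Psi_s| \le LK$ by (C.3); and $\{u_s < -K\} = \{T_s \bone_B > e^{K/s}\} \subset Z_s$ for $s$ small. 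Combining these yields, for $s$ small,
\begin{equation*}
\Phi_s \ge -s\log 2 - LK\,\bone_{Z_s},\ \
\Psi_s \ge -s\log 2 - LK\,\bone_{Z_s},\ \
\phi_s - \Phi_s \ge -2LK\,\bone_{Z_s},\ \
\Phi_s - \Psi_s \ge -2LK\,\bone_{Z_s}.
\end{equation*}

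Averaging the latter three over $s \in (0,t]$ preserves the form of the bound, and by \eqref{eq:ZZ} one has $t^{-1}\int_0^t \bone_{Z_s}\,ds \to 0$ in $L^1(\mu)$ as $t \to 0$; in particular, for any $h \in L^1(\mu)$, $\int h \cdot t^{-1}\int_0^t \bone_{Z_s}\,ds\,d\mu \to 0$. For the assertions $\Phi_0 \ge 0$, $\bar\Phi_0 \ge \bar\Psi_0$, and $\bar\Psi_0 \ge 0$, I integrate the corresponding pointwise bound against an arbitrary nonnegative $h \in L^1(\mu)$ and use the weak-$\ast$ convergence in $L^\infty(\mu)$ of $\Phi_{t_{n'}}$, $\bar\Phi_{t_{n'}}$, $\bar\Psi_{t_{n'}}$ to their respective limits. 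Each gives $\int(\cdot)h\,d\mu \ge 0$, forcing the $\mu$-a.e.\ inequality.

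The delicate case is $\bar\phi_0 \ge \bar\Phi_0$, because $\bar\phi_0$ is constructed only locally: on each $\hat E_k$ it equals $\psi_k$, the weak-$\D$ limit of $\bar\phi_{t_{n'}}\chi_k$. My plan is to restrict to nonnegative bounded test functions $h$ supported on $\hat E_k$; such $h$ belong to $L^1(\mu) \cap L^2(\mu)$ since $\mu(\hat E_k) \le \mu(E_k) < \infty$, and $\chi_k h = h$, so
\begin{equation*}
\int_{\hat E_k} \bar\phi_0\, h\,d\mu = \int \psi_k\, h\,d\mu = \lim_{n'\to\infty} \int \bar\phi_{t_{n'}}\chi_k h\,d\mu = \lim_{n'\to\infty} \int_{\hat E_k} \bar\phi_{t_{n'}}\, h\,d\mu.
\end{equation*}
Combined with the weak-$\ast$ $L^\infty$ limit of $\bar\Phi_{t_{n'}} h$ and the averaged pointwise bound $\bar\phi_{t_{n'}} - \bar\Phi_{t_{n'}} \ge -2LK\cdot t_{n'}^{-1}\int_0^{t_{n'}}\bone_{Z_s}\,ds$, this gives $\int_{\hat E_k}(\bar\phi_0 - \bar\Phi_0) h\,d\mu \ge 0$. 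Since $h$ is arbitrary and $\bigcup_k \hat E_k$ has full $\mu$-measure, $\bar\phi_0 \ge \bar\Phi_0$ $\mu$-a.e. The main obstacle I anticipate is keeping the two modes of convergence straight --- weak $\D$ for the $\bar\phi$ family, which only survives when paired with $\chi_k$, versus weak-$\ast$ $L^\infty$ for the bounded families $\Phi$, $\bar\Phi$, $\bar\Psi$ --- which forces the careful choice of test functions just described.
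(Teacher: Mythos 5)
Your proposal is correct and follows essentially the same route as the paper: split each integral over $Z_s$ and $E\setminus Z_s$, use (C.3), (C.4), and the bound $u_s\ge -s\log 2$ off $Z_s$, average in $s$, and pass to the limit along $\{t_{n'}\}$ using \eqref{eq:ZZ} together with the weak-$\ast$ $L^\infty$ (resp.\ weak-$\D$) convergences. Your explicit localization to test functions supported on $\hat E_k$ for the inequality $\bar\phi_0\ge\bar\Phi_0$ is exactly the point the paper compresses into ``by the same argument,'' so it is a welcome clarification rather than a deviation.
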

\begin{proof}
By using (C.3) and the inequality \eqref{eq:pp} with $\ph_t$ replaced by $\Psi_{t}$,
for $Y \in \cB_{0}$,
\begin{align*}
\int_{Y} \bar{\Psi}_{t}\,d\mu
&=\frac{1}{t}\int_{0}^{t} \left( \int_{Y \cap Z_{s}}\Psi_{s} \,d\mu+\int_{Y \setminus Z_{s}}\Psi_{s}\,d\mu \right)\,ds 
\ge -\frac{LK}{t} \int_{0}^{t} \mu(Z_{s})\,ds-\frac{\mu(Y)\log2}{2}t
\end{align*}
for every $t >0$. Then, by letting $t \to 0$ along the sequence $\left\{ t_{n'}\right\}$ in the above inequality, 
we obtain $\int_{Y} \bar{\Psi}_{0}\,d\mu \ge 0$
by applying \eqref{eq:ZZ}. 
Therefore, $\bar{\Psi}_{0} \ge 0$ $\mu$-a.e.
Next, for $Y \in \cB_{0}$,
\begin{align*}
\int_{Y}(\bar{\phi}_{t}-\bar{\Phi}_{t})\,d\mu &=\frac{1}{t}\int_{0}^{t} \int_{Y \cap Z_{s}}(\phi_{s}-\Phi_{s})\,d\mu \,ds+\frac{1}{t}\int_{0}^{t} \int_{Y \setminus Z_{s}}(\phi_{s}-\Phi_{s})\,d\mu \,ds  \\
&\ge -\frac{2LK}{t} \int_{0}^{t} \mu(Z_{s})\,ds +0
\quad \text{(from (C.3))}
\end{align*}
for $t>0$ small enough. By the same argument, we obtain $\bar{\phi}_{0} \ge \bar{\Phi}_{0}$, and the other inequalities are proved in the same way.
\end{proof}
\begin{lemma} \label{lem:lemphi}
$\bar{\phi}_{0}=0$ $\mu$-a.e.\ on $B$.
\end{lemma}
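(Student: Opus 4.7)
The plan is to combine the weak $L^{2}$-convergence used to define $\bar{\phi}_{0}$ with the strong $L^{2}$-continuity of $\{T_{t}\}_{t>0}$ at $t=0$, and then close the argument using the nonnegativity $\bar{\phi}_{0}\ge0$ from Lemma~\ref{lem:lemorder}.

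First I would fix $k\in\N$ and test the weak $L^{2}(\mu)$-convergence $\bar{\phi}_{t_{n'}}\chi_{k}\to\psi_{k}$ against $\bone_{B\cap\hat{E}_{k}}$, which lies in $L^{2}(\mu)$ because $\mu(\hat{E}_{k})\le\mu(E_{k})<\infty$. Since $\chi_{k}\equiv1$ and $\psi_{k}=\bar{\phi}_{0}$ on $\hat{E}_{k}$, this produces
\[
\int_{B\cap\hat{E}_{k}}\bar{\phi}_{t_{n'}}\,d\mu\longrightarrow\int_{B\cap\hat{E}_{k}}\bar{\phi}_{0}\,d\mu.
\]
Using $\bar{\phi}_{t}=t^{-1}\int_{0}^{t}\phi_{s}\,ds$ and Fubini, a Ces\`aro-type argument reduces the target to showing that $\int_{B\cap\hat{E}_{k}}\phi_{s}\,d\mu\to0$ as $s\downarrow0$.

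For that pointwise estimate on $B$, I would split $B$ into the deviation set $A_{s}:=\{x\in B:T_{s}\bone_{B}(x)\notin[1/2,2]\}$ and its complement in $B$. On $B\setminus A_{s}$, one has $|u_{s}|\le s\log2$, and the Lipschitz bound $|\phi^{K}(y)|\le|y|$ (which follows from $g(0)=0$ and $|g'|\le1$) gives $|\phi_{s}|\le s\log2$; its integral over $B\cap\hat{E}_{k}$ is at most $(s\log 2)\mu(B\cap\hat{E}_{k})\to0$. On $A_{s}$, property (C.3) yields $|\phi_{s}|\le LK$, while the Chebyshev inequality combined with $\bone_{B}(x)=1$ on $A_{s}$ gives
\[
\mu(A_{s})\le 4\|T_{s}\bone_{B}-\bone_{B}\|_{2}^{2}\longrightarrow0
\]
by the strong $L^{2}$-continuity of $\{T_{t}\}_{t>0}$ at $0$. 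Summing the two contributions yields $\int_{B\cap\hat{E}_{k}}\phi_{s}\,d\mu\to0$, and hence $\int_{B\cap\hat{E}_{k}}\bar{\phi}_{0}\,d\mu=0$.

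Finally, Lemma~\ref{lem:lemorder} gives $\bar{\phi}_{0}\ge0$ $\mu$-a.e., so the vanishing of this integral forces $\bar{\phi}_{0}=0$ $\mu$-a.e.\ on $B\cap\hat{E}_{k}$. Because $\{\hat{E}_{k}\}$ is a measurable nest, $\mu(E\setminus\bigcup_{k}\hat{E}_{k})=0$, and taking the union in $k$ concludes that $\bar{\phi}_{0}=0$ $\mu$-a.e.\ on $B$. I do not expect a serious obstacle; the one point worth flagging is that, since $\{T_{t}\}_{t>0}$ is not Markovian, $T_{s}\bone_{B}$ may exceed $1$ on $B$, which is why the deviation set $A_{s}$ must be two-sided---but the $L^{2}$-continuity controls both directions simultaneously.
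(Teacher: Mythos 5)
Your proof is correct and follows essentially the same route as the paper: show $\int_B \phi_s\,d\mu\to0$ as $s\to0$, pass to $\bar\phi_t$ via the Ces\`aro identity, use the weak convergence defining $\bar\phi_0$ (localized by $\chi_k$) to get $\int_{B\cap\hat E_k}\bar\phi_0\,d\mu=0$, and conclude with the nonnegativity from Lemma~\ref{lem:lemorder}. The only difference is cosmetic: you establish the inner convergence by a quantitative Chebyshev split with the deviation set $A_s$ (in the spirit of the paper's treatment of $Z_t$), whereas the paper extracts an a.e.-convergent subsequence of $T_s\bone_B$ and applies dominated convergence; both are valid, and your explicit localization to $\hat E_k$ before taking the union in $k$ is if anything slightly more careful than the paper's wording.
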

\begin{proof}
For an arbitrary sequence $\left\{ s_{n} \right\} \downarrow 0$,  
$T_{s_n}\bone_{B}$ converges to $\bone_{B}$ in $L^{2}(\mu)$ as $n \to \infty$. 
Take an arbitrary subsequence $\left\{ s_{n'} \right\}$ from $\left\{s_{n} \right\}$. From this, we can find a subsequence $\left\{ s_{n''}\right\}$ from $\left\{ s_{n'} \right\}$ such that $T_{s_{n''}}\bone_{B} \to \bone_{B}$ $\mu$-a.e.\ as $n'' \to \infty$. Using the dominated convergence theorem, $\lim_{n'' \to \infty} \int_{B}\phi_{s_{n''}}\,d\mu=0$. This means $\lim_{t \to 0}\int_{B}\phi_{t}\,d\mu=0$.
Then, by  letting $t \to 0$ along the sequence $\left\{ t_{n'} \right\}$ in the identity
\[
 \int_{B} \bar{\phi}_{t}d\mu=\frac{1}{t} \int_{0}^{t} \int_{B} \phi_{s}\, d\mu \,ds,
\]
we obtain $\int_{B} \bar{\phi}_{0}\,d\mu=0$. The claim follows directly.
\end{proof}
\begin{lemma} \label{lem:sqlem}
$\displaystyle \sqrt{\bar{\phi}_{0}} \in \D_{0}$. 
\end{lemma}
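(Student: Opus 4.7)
The plan is to first establish the pointwise gradient bound $|\nb\bar\phi_0|_H^2 \le 4\bar\phi_0$ $\mu$-a.e.\ on each $\hat E_k$, and then deduce $\sqrt{\bar\phi_0}\in\D_0$ via a chain-rule approximation controlled using this bound.

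For the gradient bound, fix $k$ and test Lemma~\ref{lem:R} against $h\in\D_{E_k,b}$ with $0\le h\le 1$ and $\operatorname{supp}(h)\subset\hat E_k$. Multiplying by $2t$, absorbing the $O(t)\int_E\Gm(\phi_t^\dl)h\,d\mu$ contribution on the right into the left for small $t$, and bounding the $\cE^0$-coupling term via (C.2), Cauchy--Schwarz, and Lemma~\ref{lem:bddlem} yields
\[
\int_E \Gm(\phi_t^\dl)h\,d\mu \le 2(h,\Psi_t^\dl)_\mu - 2t(h,\del_t\Phi_t^\dl)_\mu + O(t).
\]
Averaging over $t\in[\eps,T]$ (with $T\le T_0$), integrating by parts the $t\del_t$ contribution (the $\eps$-boundary term vanishing since $|\Phi_\eps^\dl|\le LK$), and applying Lemma~\ref{lem:meanlem} gives
\[
\int_E \Gm(\bar\phi_T^\dl)h\,d\mu \le 2(h,\bar\Psi_T^\dl-\Phi_T^\dl+\bar\Phi_T^\dl)_\mu + o_T(1).
\]
Sending $\dl\to 0$ and $T=t_{n'}\to 0$ along the chosen subsequence, the left-hand side is handled by the weak-$\D$ convergence $\bar\phi_{t_{n'}}\chi_k\rightharpoonup\psi_k$ together with Lemma~\ref{lem:wklem} and the identities $\chi_k=1$, $\nb\chi_k=0$ on $\hat E_k$ from Proposition~\ref{prop:D} (so $\int\Gm(\bar\phi_{t_{n'}})h\,d\mu=\int\Gm(\bar\phi_{t_{n'}}\chi_k)h\,d\mu$); the right-hand side passes to the limit by the weak${}^*$-$L^\infty(\mu)$ convergences of $\bar\Psi_t,\Phi_t,\bar\Phi_t$. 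Lemma~\ref{lem:lemorder} (giving $\Phi_0\ge 0$, $\bar\Psi_0\le\bar\Phi_0$, and $\bar\Phi_0\le\bar\phi_0=\psi_k$ on $\hat E_k$) reduces the result to $\int_E\Gm(\psi_k)h\,d\mu \le 4\int_E\psi_k h\,d\mu$ for all admissible $h$, so $|\nb\psi_k|_H^2 \le 4\psi_k$ $\mu$-a.e.\ on $\hat E_k$, i.e., $|\nb\bar\phi_0|_H^2 \le 4\bar\phi_0$ $\mu$-a.e.\ on $\hat E_k$.

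For the second step, let $F_\eps\in C^1(\R)$ be a smooth approximation of $x\mapsto\sqrt{x^+}$ with $F_\eps(0)=0$ and $|F_\eps'(x)|^2 \le 1/(4(x^++\eps^2))$. The chain rule gives $F_\eps(\psi_k)\chi_k\in\D$, and on $\hat E_k$ it equals $\sqrt{\bar\phi_0+\eps^2}-\eps$ with $|\nb F_\eps(\psi_k)|_H^2 \le |\nb\bar\phi_0|_H^2/(4(\bar\phi_0+\eps^2))\le 1$ from the bound just established (using $\nb\chi_k=0$ on $\hat E_k$). A weak compactness argument in $\D$ (via Ma--R\"ockner's Lemma~2.12, as invoked earlier in this section) as $\eps\to 0$ produces $g_k\in\D$ agreeing with $\sqrt{\bar\phi_0}$ on $\hat E_k$, with the gradient bound persisting through the weak limit by lower semicontinuity. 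Doing this for every $k$ places $\sqrt{\bar\phi_0}$ in $\D_{\loc,b}(\{\hat E_k\})$ with $|\nb\sqrt{\bar\phi_0}|_H\le 1$ $\mu$-a.e., so $\sqrt{\bar\phi_0}\in\D_0$.

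The main obstacle is securing a uniform $\D$-bound on the approximants $F_\eps(\psi_k)\chi_k$ as $\eps\to 0$, since the pointwise inequality from the first step is only available on $\hat E_k$: the contribution of $\chi_k\nb F_\eps(\psi_k)$ on the transition region $E_k\setminus\hat E_k$ must be handled via the secondary estimate $|\nb\psi_k|_H^2\le 4\bar\Phi_0\le 4LK$ (obtained by repeating Step~1 with $h\in\D_{E_k,b}$ not restricted to $\hat E_k$), together with the observation that $\nb\chi_k$ is concentrated precisely on this transition region.
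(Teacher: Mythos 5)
Your Step~1 reproduces the paper's derivation of the key inequality $\tfrac12\int_E \Gm(\bar\phi_0)h\,d\mu\le (2\bar\phi_0,h)_\mu$ (i.e.\ $|\nb\bar\phi_0|_H^2\le4\bar\phi_0$) with the same ingredients, up to one ordering issue: you absorb the coupling term $-2t\,\cE^0(((\phi')_t^\dl)^2h,u_t^\dl-e_t^\dl)$ into an ``$O(t)$'' at fixed $t$ by invoking Lemma~\ref{lem:bddlem}, but that lemma only bounds the \emph{time-averaged} energies $\cE^0(\bar\Phi_t^\dl\chi_k)$, not $\cE^0(\Phi_t^\dl\chi_k)$; you must first average in $t$ and integrate by parts so that $\Phi_t^\dl$ is replaced by $\bar\Phi_t^\dl$ (as the paper does), after which Cauchy--Schwarz makes this contribution $O(T)$ and your second display is justified.

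The genuine gap is in Step~2, and the obstacle you flag there is largely self-inflicted. Since the Step~1 bound holds on $\hat E_j$ for \emph{every} $j$ and $\{\hat E_j\}$ is a measurable nest, the inequality $|\nb\bar\phi_0|_H^2\le4\bar\phi_0$ holds $\mu$-a.e.\ on $E$ for the glued function $\bar\phi_0$, hence $|\nb(\sqrt{\bar\phi_0+\eps}-\sqrt\eps)|_H\le1$ $\mu$-a.e.\ on $E$ --- not only on one $\hat E_k$. This is exactly how the paper secures the uniform $\D$-bound for the approximants $(\sqrt{\bar\phi_0+\eps}-\sqrt\eps)\chi_k$: on the transition region only the harmless term $(\cdots)\nb\chi_k$ appears. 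Your approximants instead use $\psi_k$, and your proposed repair fails on both counts. First, you cannot obtain $|\nb\psi_k|_H^2\le4\bar\Phi_0$ on all of $E_k$ by ``repeating Step~1 with $h\in\D_{E_k,b}$'': the identity $\int_E\Gm(\phi_t^\dl\chi_k)h\,d\mu=\int_E\Gm(u_t^\dl-e_t^\dl)((\phi')_t^\dl)^2h\,d\mu$, which is what ties Lemma~\ref{lem:R} to the gradient of $\phi_t^\dl\chi_k$, requires $\chi_k\equiv1$ and $\nb\chi_k=0$ on the support of $h$ (Proposition~\ref{prop:D}); off $\hat E_k$ one picks up cross terms in $\Gm(\cdot,\chi_k)$ and $\Gm(\chi_k)$, which are only integrable, so no pointwise bound on $E_k\setminus\hat E_k$ results. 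Second, even granting $|\nb\psi_k|_H^2\le4LK$ there, this does not give a uniform-in-$\eps$ bound on $\chi_k\nb F_\eps(\psi_k)$: one has only $|F_\eps'(\psi_k)|^2\le\bigl(4(\psi_k^+{+}\eps^2)\bigr)^{-1}$, which is of order $\eps^{-2}$ wherever $\psi_k$ is near $0$, and on the transition region there is no inequality of the form $|\nb\psi_k|_H^2\lesssim\psi_k^+$ ($\psi_k$ is merely the weak limit of $\bar\phi_t\chi_k$ there and need not dominate $\bar\Phi_0$). So the weak-compactness step has no uniform bound to run on. The fix is to argue as the paper does: state the Step~1 inequality for all $h\in\bigcup_k\D_{\hat E_k,b,+}$, conclude the gradient bound $\mu$-a.e.\ on $E$, and build the approximants from $\bar\phi_0$ rather than from $\psi_k$.
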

\begin{proof}
Fix $h \in \D_{\hat{E}_k,b,+}$ arbitrarily. Since $\phi_{t}^{\dl}\chi_k-(\phi_t^\dl - \phi(e_t^\dl))$ is constant on $\hat E_k$, Proposition~\ref{prop:D} implies that 
\[
\int_E \Gm({\phi_{t}^{\dl}\chi_k})h\,d\mu
=\int_E \Gm({\phi_t^\dl - \phi(e_t^\dl)})h\,d\mu
=\int_E \Gm({u_t^\dl-e_t^\dl})((\phi')_t^\dl)^2 h\,d\mu.
\]
By using Lemma~\ref{lem:R} with $\rho=h$, 
\begin{align*}
&\int_E \Gm({u_{t}^{\dl}-e_{t}^{\dl }})((\phi')_{t}^{\dl})^{2}h\,d\mu \\
&\le-2t(h,\del_{t} \Phi_{t}^{\dl})_\mu+2(h,\Psi_{t}^{\dl})_\mu -2t \cE^{0}(((\phi')_{t}^{\dl})^{2}h,u_{t}^{\dl}-e_{t}^{\dl})\\
&\qad+2t\left(\frac{C t}{K}+1\right)\int_E \Gm ( u_{t}^{\dl}-e_{t}^{\dl})((\phi')_{t}^{\dl})^{2}h\,d\mu+2t^2\cE^0(h)+2t R(t,k).
\end{align*}
Since
\begin{align*}
-2t \cE^{0}(((\phi')_{t}^{\dl})^{2}h,u_{t}^{\dl}-e_{t}^{\dl})
&=-t\int_E \Gm ( h, u_t^\dl-e_t^\dl)((\ph')_t^\dl)^2\,d\mu
-2t\int_E \Gm (u_{t}^{\dl}-e_{t}^{\dl}) (\phi')_{t}^{\dl}(\phi'')_{t}^{\dl}h\,d\mu \\
&\le -2t\cE^0( h, \Phi_{t}^{\dl}\chi_k)+\frac{2Ct}{K}\int_E \Gm(\phi_t^\dl \chi_k)h\,d\mu\qquad\text{(from (C.2))},
\end{align*}
we have
\begin{align*}
&\left(1-\frac{2Ct}{K}-\frac{2Ct^2}{K}-2t \right)\int_E \Gm(\phi_t^\dl \chi_k)h\,d\mu \\
&\le -2t (h,\del_{t} \Phi_{t}^{\dl})_\mu + 2(h,\Psi_{t}^{\dl})_\mu-2t \cE^0(h,\Phi_{t}^{\dl}\chi_k) +2t^2\cE^0(h)+2t R(t,k).
\end{align*}
Then, for $T\in(0,T_0]$,
\begin{align}
&\frac{1}{2T}\int_{0}^{T} \left(1-\frac{2CT}{K}-\frac{2CT^2}{K}-2T\right)\int_E \Gm(\phi_t^\dl \chi_k)h\,d\mu\,dt \nonumber \\
&\le -\frac{1}{T}\int_0^T t (h,\del_t\Phi_t^{\dl})_\mu\,dt+ (h,\bar{\Psi}_{T}^{\dl})_\mu-\frac{1}{T}\int_{0}^{T} t \cE^0 (h,\Phi_{t}^{\dl}\chi_k) \,dt +C_4 T,
\label{eq:eq100}
\end{align}
where $C_4=T_0\cE^0(h)/3+ R(T_0,k)/2$.
Integration by parts gives 
\begin{align*}
\frac{1}{T}\int_0^T t (h,\del_t\Phi_t^{\dl})_\mu\,dt
&=\frac1T\left(T(h,\Phi_T^\dl)_\mu-\int_0^T (h,\Phi_t^\dl)_\mu\,dt\right)
=(h,\Phi_{T}^{\dl})_\mu-(h,\bar{\Phi}_{T}^{\dl})_\mu\\
\shortintertext{and}
\frac{1}{T}\int_{0}^{T} t \cE^0 (h,\Phi_{t}^{\dl}\chi_k) \,dt
 &=\left. \frac{t}{T} \int_{0}^{t} \cE^0(h,\Phi_{s}^{\dl}\chi_k)\,ds \right|_{t=0}^{t=T}-\frac{1}{T} \int_{0}^{T}\int_{0}^{t}\cE^0(h,\Phi_{s}^{\dl}\chi_k)\,ds\,dt \\
&=T \cE^0(h,\bar{\Phi}_{T}^{\dl}\chi_k)-\frac{1}{T}\int_{0}^{T}t\cE^0(h,\bar{\Phi}_{t}^{\dl}\chi_k)\,dt\\
&\to0\quad \text{as $\dl \to0$ and $T \to 0$}
\end{align*}
because $\cE^{0}(\bar{\Phi}_{t}^{\dl}\chi_k)$ is bounded in
 $\dl $ and $t$ by Lemma~\ref{lem:bddlem}. 
  Letting $\dl \to 0$ and $T \to 0$ along the sequence $\left\{ t_{n'} \right\}$ in \eqref{eq:eq100}, we obtain
from Lemmas~\ref{lem:wklem} and \ref{lem:meanlem} that 
\begin{equation}
\frac{1}{2}\int_E \Gm({\bar{\phi}_{0}})h\,d\mu
\le - (h,\Phi_{0})_\mu+(h,\bar{\Phi}_{0})_\mu+ (h,\bar{\Psi}_{0})_\mu \le (2\bar{\phi}_{0},h)_\mu. \label{eq:eqsqite}
\end{equation}
The second inequality follows from Lemma~\ref{lem:lemorder}.
Then, for each $\eps>0$,
\[
\int_E \Gm\Bigl({\sqrt{\bar{\phi}_{0}+\eps }-\sqrt{\eps}}\Bigr)h\,d\mu
=\frac{1}{4} \int_E \Gm({\bar{\phi}_{0}})\frac{h}{\bar{\phi}_{0}+\eps}\,d\mu
\le \frac{1}{2} \left(2\bar{\phi}_{0},\frac{h}{\bar{\phi}_{0}+\eps} \right)\le \|h\|_{1}.
\]
This inequality holds for all $h \in \bigcup_{k=1}^{\infty} \D_{\hat{E}_{k},b,+}$. 
Hence, $\bigl|\nb\bigl(\sqrt{\bar{\phi}_{0}+\eps}-\sqrt{\eps}\bigr)\bigr|_H\le1$ $\mu$-a.e.
Next, fix $k\in\N$ and let $f_\eps$ denote $(\sqrt{\bar{\phi}_{0}+\eps}-\sqrt{\eps})\chi_k$ for $\eps>0$.
Then, from the argument above, $|\nb f_\eps|_H\le1$ $\mu$-a.e.\ on $\hat E_k$ and $\{f_\eps\}_{\eps>0}$ is bounded in $\D$. Any weak limit in $\D$ of a subsequence should be $\sqrt{\bar{\phi}_{0}}\chi_k$, and so $|\nb(\sqrt{\bar{\phi}_{0}}\chi_k)|_H\le 1$ $\mu$-a.e.\ on $\hat E_k$. Therefore, $\sqrt{\bar{\phi}_{0}}\in\D_0$.
\end{proof}
From Lemma~\ref{lem:lemphi}, Lemma~\ref{lem:sqlem}, and Proposition~\ref{prop:distance},
we conclude $\bar{\phi}_{0} \le \sd_{B}^{2}$ $\mu$-a.e. 
The multiplicative constant can be further improved since this inequality is not sharp.
\begin{lemma}\label{lem:psilem} 
Given $K>0$, we can choose $M>0$ such that
\begin{equation}\label{eq:PP}
\Phi^{K}(\phi^{M}_{t}\bone_{E\setminus Z_t}) \ge \Psi^{K}_{t},
\quad t\in(0,K/\log 2].
\end{equation}
\end{lemma}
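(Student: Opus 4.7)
The plan is to verify the claimed inequality pointwise, splitting $E$ according to the cutoff set $Z_t$.

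First I would handle $Z_t$, where the indicator $\bone_{E\setminus Z_t}$ vanishes, so the left-hand side reduces to $\Phi^K(0)=0$. On $Z_t$ the definition forces $T_t\bone_B>1$, hence $u_t<0$, and since $(\phi^K)'$ is strictly positive, $\Psi^K(u_t)=u_t((\phi^K)'(u_t))^2\le 0$, settling this region.

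Next, on $E\setminus Z_t$ I would use the bound $T_t\bone_B\le 2$ together with $t\le K/\log 2$ to deduce $u_t\ge -t\log 2\ge -K$. The problem then reduces to the one-variable statement: find $M>0$ such that
\[
\Phi^K(\phi^M(u))\ge \Psi^K(u)\qquad\text{for every }u\in[-K,\infty].
\]
I would choose $M\ge K$ so large that $\sup_{u\ge M}\Psi^K(u)\le K$; such an $M$ exists by (C.5), which says $\Psi^K(x)\to 0$ as $x\to\infty$. For $u\in[-K,M]\subset[-M,M]$, property (C.4) applied with the parameter $M$ gives $\phi^M(u)=u$, and then $\Phi^K(u)\ge \Psi^K(u)$ follows from (C.3) when $u\ge 0$ and from the identity $\Phi^K(u)=u=\Psi^K(u)$ on $[-K,0]$ (which comes from (C.4) at parameter $K$). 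For $u>M$, the monotonicity of $\phi^M$ (since $(\phi^M)'>0$ by (C.1)) together with $\phi^M(M)=M$ gives $\phi^M(u)\ge M$, and then the monotonicity of $\Phi^K$ combined with $\Phi^K(M)\ge \Phi^K(K)=K\ge \sup_{u'\ge M}\Psi^K(u')\ge \Psi^K(u)$ yields the desired inequality.

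The only slightly delicate point is choosing $M$ large enough to dominate the tail of $\Psi^K$, which rests crucially on (C.5); everything else is an immediate bookkeeping consequence of (C.1)--(C.4) and the monotonicity of $\phi^M$ and $\Phi^K$. The main conceptual obstacle, if one can call it that, is organizing the case analysis so that the transition point $u=-K$ is correctly covered by the assumption $t\le K/\log 2$, and that the three regions $[-K,0]$, $[0,M]$, and $[M,\infty]$ each use the appropriate instance of the listed properties of $\phi$, $\Phi$, and $\Psi$ at the correct parameter.
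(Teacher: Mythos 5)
Your proof is correct and follows essentially the same route as the paper's: handle $Z_t$ by noting the left side is $\Phi^K(0)=0$ while $u_t<0$ makes $\Psi^K_t\le0$, use $t\le K/\log 2$ to get $u_t\ge-K$ off $Z_t$, and choose $M$ large via (C.5) so that $\Phi^K(M)$ dominates the tail of $\Psi^K$, with (C.3)--(C.4) covering the range $[-K,M]$. The only cosmetic difference is that the paper compresses your case split into the single chain $\Phi^K(\phi^M_t)\ge\Phi^K(M\wg u_t)=\Phi^K(M)\wg\Phi^K(u_t)\ge\Psi^K(u_t)$, choosing $M$ with $\Phi^K(M)\ge\sup_{s\in\R}\Psi^K(s)$ instead of your threshold $K$.
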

\begin{proof}
Since $\Phi^{K}$ is non-decreasing, $\Psi^{K} \le \Phi^{K}$ on $[-K,\infty)$ and (C.5), we can find $M>0$ such that  
$ \Phi^K(M) \ge \sup_{s \in \R} \Psi^{K}(s)$. 
From \eqref{eq:pp}, $\phi_t^M\ge -t\log 2\ge -K$ on $E\setminus Z_t$.
Then, on $E\setminus Z_t$,
\[
\Phi^{K}(\phi^{M}_{t}\bone_{E\setminus Z_t})
=\Phi^{K}(\phi^{M}_{t})\ge \Phi^{K}(M\wg u_t)
=\Phi^K(M)\wg \Phi^K(u_t)\ge \Psi^{K}(u_t).
\]
On $Z_t$, \eqref{eq:PP} is trivial since the left-hand side is zero and the right-hand side is nonpositive.
\end{proof}
\begin{lemma} \label{lem:itelem} 
If the inequality 
\begin{equation}\label{eq:itelem}
\bar{\phi}_{0}^{K} \le \b \frac{\sd_{B}^{2}}{2}\quad \mu\text{-a.e.\ on }\{\sd_B<N\}
\end{equation}
is true for some $\b>1$ for every $K>0$ and every limit $\bar{\phi}_{0}^{K}$, then
\[
\bar{\phi}_{0}^{K} \le (2-\b^{-1}) \frac{\sd_{B}^{2}}{2}\quad \mu\text{-a.e.\ on }\{\sd_B<N\}.
\]
\end{lemma}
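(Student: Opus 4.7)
The plan is a bootstrap: feed the hypothesis $\bar{\phi}_0^K\le\beta\,\sd_B^2/2$ back into the proof of Lemma~\ref{lem:sqlem} in order to replace the elementary bounds $\bar{\Phi}_0^K\le\bar{\phi}_0^K$, $\bar{\Psi}_0^K\le\bar{\phi}_0^K$, $\Phi_0^K\ge 0$ by sharper estimates, and then re-derive a stronger Dirichlet-form inequality for $\bar{\phi}_0^K$ with the improved constant. The starting point is \eqref{eq:eqsqite}, namely
\[
\frac12\int_E\Gm(\bar{\phi}_0^K)\,h\,d\mu\le -(\Phi_0^K,h)_\mu+(\bar{\Phi}_0^K,h)_\mu+(\bar{\Psi}_0^K,h)_\mu,
\]
valid for $h\in\D_{\hat{E}_k,b,+}$. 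I want to upgrade the previous crude bound $\bar{\Phi}_0^K+\bar{\Psi}_0^K-\Phi_0^K\le 2\bar{\phi}_0^K$ to $\le (2-\beta^{-1})\bar{\phi}_0^K$ on $\{\sd_B<N\}$.

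The key tool is Lemma~\ref{lem:psilem}: for given $K>0$, choose $M=M(K)$ so that $\Psi^K_s\le\Phi^K(\phi^M_s\bone_{E\setminus Z_s})$ on $(0,K/\log 2]$. Averaging in $s\in(0,t]$, applying Jensen's inequality via the concavity of $\Phi^K$ on $[-K,\infty)$ (legitimate because $\phi^M_s\bone_{E\setminus Z_s}\ge -s\log 2\ge -K$ by \eqref{eq:pp}), using the $1$-Lipschitz property of $\Phi^K$ to absorb the discrepancy between $\phi^M_s\bone_{E\setminus Z_s}$ and $\phi^M_s$ (whose $L^1$-norm is dominated by $LM\cdot t^{-1}\int_0^t\bone_{Z_s}\,ds\to 0$ by \eqref{eq:ZZ}), and invoking Lemma~\ref{lem:lsclem} to commute $\Phi^K$ with the weak-$*$ limit along $\{t_{n'}\}$, I expect to obtain
\[
\bar{\Psi}_0^K\le\Phi^K\bigl(\bar{\phi}_0^M\bigr)\le\Phi^K\bigl(\beta\,\sd_B^2/2\bigr)\quad\mu\text{-a.e.\ on }\{\sd_B<N\},
\]
where the second inequality is the induction hypothesis at level $M$. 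A simpler concavity argument applied to $\bar{\Phi}_t^K$ itself yields an analogous refinement of $\bar{\Phi}_0^K$. Substituting these refined bounds and using property (C.6) ($\Phi^K(\beta y)\le\beta\Phi^K(y)$) together with the hypothesis to compare $\Phi^K(\sd_B^2/2)$ and $\Phi^K(\bar{\phi}_0^K/\beta)$, I expect to arrive at
\[
\int_E\Gm(\bar{\phi}_0^K)\,h\,d\mu\le 2\bigl(2-\beta^{-1}\bigr)\bigl(\bar{\phi}_0^K,h\bigr)_\mu
\]
for every $h\in\D_{\hat E_k,b,+}$ supported in $\{\sd_B<N\}$.

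The $\eps$-regularization and $\chi_k$-localization at the end of the proof of Lemma~\ref{lem:sqlem} then show that $\sqrt{2\bar{\phi}_0^K/(2-\beta^{-1})}\in\D_0$. Since this function vanishes on $B$ by Lemma~\ref{lem:lemphi}, Proposition~\ref{prop:distance} gives $\sqrt{2\bar{\phi}_0^K/(2-\beta^{-1})}\le\sd_B\wg N'$ $\mu$-a.e.\ for every sufficiently large $N'$; letting $N'\to\infty$ and squaring yields the desired bound $\bar{\phi}_0^K\le(2-\beta^{-1})\,\sd_B^2/2$ $\mu$-a.e.\ on $\{\sd_B<N\}$.

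The main obstacle will be executing the Jensen-plus-weak-limit step cleanly. Since $\Phi^K$ is concave only on $[-K,\infty)$ rather than on all of $\R$, the argument must be confined to $E\setminus Z_s$ where the truncations live in the favourable range, and the $L^1$ errors contributed by the shrinking sets $Z_s$ must be propagated coherently through pointwise, $L^1$, and weak-$*$ limits. Pinning down the precise constant $(2-\beta^{-1})$ rather than some weaker improvement of $\beta=2$ will further require careful bookkeeping when combining the refined estimates for $\bar{\Phi}_0^K$ and $\bar{\Psi}_0^K$ with property (C.6) and the hypothesis, in a way that closes the bootstrap loop exactly.
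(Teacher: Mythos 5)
Your plan correctly reproduces one half of the paper's argument: the chain $\bar{\Psi}_0^K\le\Phi^K(\bar{\phi}_0^M)$ obtained from Lemma~\ref{lem:psilem}, the concavity/1-Lipschitz property of $\hat\Phi^K$, \eqref{eq:ZZ}, and Lemma~\ref{lem:lsclem}, as well as the final square-root and $\{\sd_B<N\}$-localization step, are exactly as in the paper. But the mechanism you propose for improving the constant does not work, and it misses the one ingredient that actually drives the iteration: the standing assumption of Theorem~\ref{th:lower} that the upper estimate \eqref{eq:upper} holds for all $Y\in\cB_0$ with $\sd(Y,B)<N$. In the paper, the gain from $2$ to $2-\b^{-1}$ comes from exploiting the \emph{negative} term $-(h,\Phi_0^K)_\mu$ in \eqref{eq:eqsqite} through a pointwise \emph{lower} bound $\Phi_0^K\ge\b^{-1}\bar\Psi_0^K$ on $\{\sd_B<N\}$. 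That lower bound is where \eqref{eq:upper} enters: Jensen's inequality with the convex function $x\mapsto\Phi^K(-t\log x)$ (Lemma~\ref{lem:convexlem}) gives $(\Phi_0^K,\bone_Y)_\mu\ge\mu(Y)\,\Phi^K\bigl(\sd(Y,B)^2/2\bigr)$ for small $Y\subset\{\sd_B<N\}$, then the hypothesis \eqref{eq:itelem} at level $M$ together with (C.6) converts $\Phi^K(\sd_B^2/2)$ into $\b^{-1}\Phi^K(\bar{\phi}_0^M)\ge\b^{-1}\bar\Psi_0^K$, and an $\essinf$/contradiction argument upgrades the integrated inequality to the pointwise one \eqref{eq:contradiction2}. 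Your proposal never uses \eqref{eq:upper} (nor Lemma~\ref{lem:convexlem}), and keeps only $\Phi_0^K\ge0$.

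Without that lower bound the bootstrap cannot close, because your refined estimates go in the wrong direction. The bounds you derive, $\bar\Psi_0^K\le\Phi^K(\b\,\sd_B^2/2)\le\b\,\Phi^K(\sd_B^2/2)$ and its analogue for $\bar\Phi_0^K$, are upper bounds in terms of $\sd_B$; to feed them back into \eqref{eq:eqsqite} you would have to dominate $\Phi^K(\sd_B^2/2)$ from \emph{above} by a multiple of $\bar{\phi}_0^K$, whereas the induction hypothesis $\bar{\phi}_0^K\le\b\,\sd_B^2/2$ only bounds $\bar{\phi}_0^K$ above by $\sd_B^2$ (so your (C.6) comparison with $\Phi^K(\bar{\phi}_0^K/\b)$ yields a lower bound for $\Phi^K(\sd_B^2/2)$, not an upper one). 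Consequently the only way to return to the right-hand side $(\bar{\phi}_0^K,h)_\mu$ is to fall back on $\bar\Phi_0^K,\bar\Psi_0^K\le\bar{\phi}_0^K$ from Lemma~\ref{lem:lemorder}, which reproduces the constant $2$ and gives no improvement. The missing idea, concretely, is the lower bound on the non-averaged limit $\Phi_0^K$ coming from the assumed Varadhan upper estimate; this is the only place in the whole lower-bound proof where $\sd_B$ controls $-t\log T_t\bone_B$ from below, and without it no constant strictly below $2$ can be produced.
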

\begin{proof}
Given $K>0$, we take $M$ as in Lemma~\ref{lem:psilem}.
Let $Y\in\cB_0$ with $Y\subset \{\sd_B<N\}$.
Using the convexity of $\Phi^{K}(-t\log(\cdot))$ for small $t$, from Lemma~\ref{lem:convexlem}, we have
\begin{align*}
(\Phi_{t}^{K},\bone_{Y})_\mu&=\int_{Y} \Phi^{K}(-t \log T_{t}\bone_{B})\,d\mu \nonumber 
 \ge \mu(Y)\Phi^{K}\left(-t \log \left(\frac{1}{\mu(Y)}(T_{t}\bone_{B},\bone_{Y})_\mu \right) \right). 
\end{align*}
By the upper estimate \eqref{eq:upper},
\[
\varliminf_{t \to 0} -t \log (T_{t} \bone_{B},\bone_{Y})_\mu=\varliminf_{t \to 0}-t \log P_{t}(Y,B) \ge \frac{\sd(Y,B)^{2}}{2}.
\]
Therefore, in the limit,
\begin{align}
(\Phi_{0}^{K},\bone_{Y})_\mu 
&\ge \mu(Y)\Phi^{K}\left(\frac{\sd(Y,B)^{2}}{2} \right) \notag \\
&=\mu(Y)\essinf_{x \in Y}\Phi^{K}\left( \frac{\sd_{B}(x)^{2}}{2}\right) \nonumber \\
&\ge \frac{\mu(Y)}{\b} \essinf_{x \in Y}\Phi^{K}(\bar{\phi}_{0}^{M}(x))
\label{eq:eqchange}
\end{align}
from the assumption of~\eqref{eq:itelem} and (C.6).
Recall now the function $\hat\Phi^{K}$ that was defined in \eqref{eq:Khat}.
Since $\hat\Phi^{K}$ is 1-Lipschitz and concave on $\R$, 
for $t\in(0,K/\log 2]$ we have
\begin{align*}
\hat\Phi^{K}(\bar{\phi}_{t}^{M})
&=\hat\Phi^{K} \left( \frac{1}{t} \int_{0}^{t} \phi^{M}(u_{s})\bone_{E\setminus Z_s}\,ds 
+\frac{1}{t} \int_{0}^{t} \phi^{M}(u_{s})\bone_{Z_s}\,ds\right)  \\
&\ge \hat\Phi^{K} \left( \frac{1}{t} \int_{0}^{t} \phi^{M}(u_{s})\bone_{E\setminus Z_s}\,ds \right)
-\frac{1}{t} \left|\int_{0}^{t} \phi^{M}(u_{s})\bone_{Z_s}\,ds\right|  \\
&\ge \frac{1}{t} \int_{0}^{t} \hat\Phi^{K} (\phi^{M}(u_{s})\bone_{E\setminus Z_s} )\,ds 
-\frac{LM}{t} \int_{0}^{t} \bone_{Z_s}\,ds\\
&\ge \bar{\Psi}_{t}^{K}-\frac{LM}{t} \int_{0}^{t} \bone_{Z_s}\,ds.
\qquad\text{(from \eqref{eq:pp} and Lemma~\ref{lem:psilem})}
\end{align*}
Take $\{t_n\}\downarrow0$ such that $\bar\ph_{t_n}^M$ converges to $\bar\ph_0^M$ weakly in $L^2(\tilde\mu)$.
From Lemma~\ref{lem:lemorder}, Lemma~\ref{lem:lsclem}, and \eqref{eq:ZZ}, we obtain that
$\Phi^{K}(\bar{\phi}_{0}^{M})=\hat\Phi^{K}(\bar{\phi}_{0}^{M})\ge \bar{\Psi}_{0}^{K}$ $\mu$-a.e.\ 
Combining this inequality and \eqref{eq:eqchange}, we get
\begin{equation}
(\Phi_{0}^{K},\bone_{Y})_\mu \ge \frac{\mu(Y)}{\b} \essinf_{x \in Y}\bar{\Psi}_{0}^{K}(x).\label{eq:contradiction}
\end{equation}
We will prove
\begin{equation}
\Phi_{0}^{K} \ge \b^{-1}\bar{\Psi}_{0}^{K}\quad \mu \mbox{-a.e.\ on }\{\sd_B<N\}.\label{eq:contradiction2}
\end{equation}
Assume for contradiction that there exists some
$Y' \in \cB_0$
and $\eps>0$ such that $Y'\subset \{\sd_B<N\}$ and
$ \Phi_{0}^{K} <\b^{-1}\bar{\Psi}_{0}^{K}-\eps$ on $Y'$. 
Let
\[
Y=\left\{ x \in Y'  \relmiddle| \bar{\Psi}_{0}^{K}(x) \le 
\essinf_{y \in Y'}\bar{\Psi}_{0}^{K}(y)+\frac{\b\eps}{2}  \right\}.
\]
Then, $\mu(Y)>0$ and $ \Phi_{0}^{K}\le \b^{-1}\essinf_{y \in Y}\bar{\Psi}_{0}^{K}(y)-\eps/2$ on $Y$. This contradicts \eqref{eq:contradiction}. 

Combining \eqref{eq:contradiction2} with \eqref{eq:eqsqite} and using Lemma \ref{lem:lemorder}, we obtain
\[
\frac12\int_E \Gm(\bar{\phi}_{0}^{K})h\,d\mu \le -\b^{-1}(h,\bar{\Psi}_{0}^{K})_\mu+  (h,\bar{\Phi}_{0}^{K})_\mu+ (h,\bar{\Psi}_{0}^{K})_\mu  
\le (2-\b^{-1})(\bar{\phi}_{0}^{K},h)_\mu
\]
for every $ h$ expressed by $h=h_1h_2$, where $h_1 \in \bigcup_{k=1}^{\infty}\D_{\hat E_{k},b,+}$ and $h_2=(0\vee k(N-\sd_B))\wg 1$ for some $k\in\N$. 
The claim follows by the same argument after \eqref{eq:eqsqite}.
\end{proof}
By repeated application of Lemma~\ref{lem:itelem}, 
$\bar{\phi}_0 \le \sd_{B}^{2}/2$ $\mu$-a.e.\ on $\{\sd_B<N\}$,
and so $\bar{\Phi}_{0} \le \sd_{B}^{2}/2$ $\mu$-a.e.\ on $\{\sd_B<N\}$ from Lemma~\ref{lem:lemorder}.
To obtain the equality, we modify Lemmas~2.9, 2.13, and 2.14 of \cite{HR} as follows.
\begin{lemma}\label{lem:HR2.9}
For any limit $\Phi_0$ (that is a weak-$L^2(\tilde\mu)$ and weak$^*$-$L^\infty(\mu)$-limit of a subsequence of $\{\Phi_t\}_{t>0}$),
\[
\Phi(\sd_B^2/2)\le \Phi_0\quad\mu\text{-a.e.\ on }\{\sd_B<N\}.
\]
\end{lemma}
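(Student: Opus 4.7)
The plan is to extract the lower bound for $\Phi_{0}$ by combining two ingredients: Jensen's inequality applied to the convex function $x\mapsto\Phi^{K}(-t\log x)$ furnished by Lemma~\ref{lem:convexlem}, together with the upper estimate \eqref{eq:upper}, which by hypothesis holds for every $A\in\cB_{0}$ with $\sd(A,B)<N$. This is essentially the reverse direction of the mechanism used in the proof of Lemma~\ref{lem:itelem}.

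First I would fix a test set $Y\in\cB_{0}$ with $Y\subset\{\sd_{B}<N\}$; such a $Y$ is obtained by intersecting $\{\sd_{B}<N\}$ with an element of the nest $\{E_{k}\}$ whenever the intersection has positive measure. For $t\in(0,K/(2C)]$, Lemma~\ref{lem:convexlem} and Jensen's inequality on the probability space $(Y,\bone_{Y}\,d\mu/\mu(Y))$ yield
\[
\int_{Y}\Phi_{t}\,d\mu\;\ge\;\mu(Y)\,\Phi^{K}\!\left(-t\log\frac{P_{t}(Y,B)}{\mu(Y)}\right).
\]
Letting $t\to 0$ along the subsequence $\{t_{n'}\}$ that defines $\Phi_{0}$, the left-hand side converges to $(\Phi_{0},\bone_{Y})_{\mu}$ because $\Phi_{t_{n'}}\to\Phi_{0}$ in the weak${}^{\ast}$-$L^{\infty}(\mu)$ sense and $\bone_{Y}\in L^{1}(\mu)$. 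Applying \eqref{eq:upper} with $A=Y$ gives $\varliminf_{t\to 0}(-t\log(P_{t}(Y,B)/\mu(Y)))\ge\sd(Y,B)^{2}/2$, and then the continuity and monotonicity of $\Phi^{K}$, together with $\sd(Y,B)=\essinf_{x\in Y}\sd_{B}(x)$ from Proposition~\ref{prop:distance}, produce
\[
(\Phi_{0},\bone_{Y})_{\mu}\;\ge\;\mu(Y)\,\essinf_{x\in Y}\Phi^{K}\!\left(\frac{\sd_{B}(x)^{2}}{2}\right).
\]

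Finally, since this bound holds for every admissible $Y$, I would deduce $\Phi^{K}(\sd_{B}^{2}/2)\le\Phi_{0}$ $\mu$-a.e.\ on $\{\sd_{B}<N\}$ by the essential-infimum contradiction argument exactly parallel to the passage from \eqref{eq:contradiction} to \eqref{eq:contradiction2} in the proof of Lemma~\ref{lem:itelem}: were there a positive-measure subset of $\{\sd_{B}<N\}$ on which $\Phi_{0}<\Phi^{K}(\sd_{B}^{2}/2)-\eps$, trimming further by the sublevel set $\{\Phi^{K}(\sd_{B}^{2}/2)\le\essinf+\eps/2\}$ would give a $Y$ violating the displayed integral inequality.

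The main obstacle is not the structural argument, which is clean, but the bookkeeping surrounding limits: one must verify that the convexity range $t\in(0,K/(2C)]$ captures the relevant tail of $\{t_{n'}\}$, that the monotone continuous function $\Phi^{K}$ interacts correctly with the essential infimum when taking the liminf through it, and that the finite-measure restriction on $Y$ via the nest does not sacrifice enough test sets to make the pointwise a.e.\ conclusion fail. None of these introduce new difficulties beyond those already handled in Lemma~\ref{lem:itelem}, so the proof should proceed smoothly once the Jensen-plus-upper-estimate pairing is set in motion.
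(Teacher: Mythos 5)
Your proposal is correct and follows essentially the same route as the paper's proof: Jensen's inequality via the convexity from Lemma~\ref{lem:convexlem}, the upper estimate \eqref{eq:upper} applied to test sets $Y\subset\{\sd_B<N\}$ of finite positive measure, weak${}^{\ast}$-$L^\infty(\mu)$ convergence of $\Phi_{t_{n'}}$ against $\bone_Y$, and the same essential-infimum trimming contradiction (intersecting with $E_k$ to stay in $\cB_0$) to pass from the integral inequality to the $\mu$-a.e.\ statement.
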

\begin{proof}
Let $Y\in\cB_0$ such that $Y\subset\{\sd_B<N\}$.
From the upper estimate~\eqref{eq:upper} and Lemma~\ref{lem:convexlem},
\begin{align*}
\Phi\left(\frac{\sd(Y,B)^2}2\right)
&\le\Phi\left(\varliminf_{t\to0}-t\log P_t(Y,B)\right)\\
&=\Phi\left(\varliminf_{t\to0}-t\log \left(\frac1{\mu(Y)}(T_t\bone_B,\bone_Y)_\mu\right)\right)\\
&=\varliminf_{t\to0}\Phi\left(-t\log \left(\frac1{\mu(Y)}(T_t\bone_B,\bone_Y)_\mu\right)\right)\\
&\le \varliminf_{t\to0}\frac1{\mu(Y)}\int_Y \Phi(-t\log T_t\bone_B)\,d\mu\\
&=\varliminf_{t\to0}\frac1{\mu(Y)}\int_Y \Phi_t\,d\mu\\
&\le \frac1{\mu(Y)}\int_Y \Phi_0\,d\mu.
\end{align*}
Let $Y_\eps=\{\Phi_0\le\Phi(\sd_B^2/2)-\eps\}\cap\{\sd_B<N\}$ for $\eps>0$ and suppose, for the sake of contradiction, that $\mu(Y_\eps)>0$.
Then, $Y'_\eps:=\{x\in Y_\eps\cap E_k\mid \Phi(\sd_B^2(x)/2)\le \Phi(\sd(Y_\eps,B)^2/2)+\eps/2\}$ also has $\mu$-positive measure for sufficiently large $k$.
But
\begin{align*}
\frac1{\mu(Y'_\eps)}\int_{Y'_\eps}\Phi\left(\frac{\sd_B^2}2\right)d\mu
&\le \Phi\left(\frac{\sd(Y'_\eps,B)^2}2\right)+\frac\eps2\\
&\le \frac1{\mu(Y'_\eps)}\int_{Y'_\eps}\Phi_0\,d\mu+\frac\eps2\\
&\le \frac1{\mu(Y'_\eps)}\int_{Y'_\eps}\Phi\left(\frac{\sd_B^2}2\right)d\mu-\frac\eps2,
\end{align*}
which is a contradiction. Therefore, $\mu(Y_\eps)=0$.
Since $\eps>0$ is arbitrary, we obtain the desired assertion.
\end{proof}
\begin{lemma}
For any limit $\bar\Phi_0$ (that is, in particular, a weak-$L^2(\tilde\mu)$ limit of a subsequence of $\{\bar\Phi_t\}_{t>0}$), $\Phi(\sd_{B}^{2}/2)\le \bar{\Phi}_{0}$ $\mu$-a.e.\ on $\{\sd_B<N\}$.
\end{lemma}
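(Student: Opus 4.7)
The plan is to adapt the proof of Lemma~\ref{lem:HR2.9} to the time-averaged quantity $\bar\Phi_t=t^{-1}\int_0^t\Phi_s\,ds$. Fix $Y\in\cB_0$ with $Y\subset\{\sd_B<N\}$. For $t$ small enough, Lemma~\ref{lem:convexlem} gives convexity of $x\mapsto \Phi(-t\log x)$, so by Jensen's inequality
\[
\Phi\!\left(-t\log\frac{(T_t\bone_B,\bone_Y)_\mu}{\mu(Y)}\right)
\le \frac{1}{\mu(Y)}\int_Y \Phi_t\,d\mu.
\]
Combining this with the upper estimate \eqref{eq:upper}, continuity of $\Phi$, and its monotonicity yields
\[
\varliminf_{s\to 0}\frac{1}{\mu(Y)}\int_Y \Phi_s\,d\mu \ge \Phi\!\left(\frac{\sd(Y,B)^2}{2}\right),
\]
exactly as in the first half of the proof of Lemma~\ref{lem:HR2.9}.

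Next I would transfer this lower bound from $\Phi_s$ to the Cesàro average $\bar\Phi_t$. This uses only the elementary fact that if $f\colon(0,\infty)\to\R$ is bounded below with $\varliminf_{s\to 0}f(s)\ge C$, then
\[
\varliminf_{t\to 0}\frac{1}{t}\int_0^t f(s)\,ds\ge C.
\]
Applying this to $f(s)=\mu(Y)^{-1}\int_Y \Phi_s\,d\mu$ (which is bounded since $\Phi$ is bounded by $LK$) produces
\[
\varliminf_{t\to 0}\frac{1}{\mu(Y)}\int_Y \bar\Phi_t\,d\mu \ge \Phi\!\left(\frac{\sd(Y,B)^2}{2}\right).
\]
Then I would pass to the limit along the subsequence $\{t_{n'}\}$ along which $\bar\Phi_{t_{n'}}\to\bar\Phi_0$ weakly-$\ast$ in $L^\infty(\mu)$; since $\mu(Y)<\infty$, so $\bone_Y\in L^1(\mu)$, this gives
\[
\frac{1}{\mu(Y)}\int_Y \bar\Phi_0\,d\mu \ge \Phi\!\left(\frac{\sd(Y,B)^2}{2}\right).
\]

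Finally, I would run the same contradiction argument as at the end of Lemma~\ref{lem:HR2.9}: if $Y_\eps:=\{\bar\Phi_0\le \Phi(\sd_B^2/2)-\eps\}\cap\{\sd_B<N\}$ had positive measure, then for $k$ large enough the set
\[
Y'_\eps:=\Bigl\{x\in Y_\eps\cap E_k \,\Bigm|\, \Phi\bigl(\sd_B(x)^2/2\bigr)\le \Phi\bigl(\sd(Y_\eps,B)^2/2\bigr)+\eps/2\Bigr\}
\]
also has positive measure, and averaging the inequality just derived over $Y'_\eps$ contradicts the definition of $Y_\eps$. Since $\eps>0$ is arbitrary, the desired inequality $\Phi(\sd_B^2/2)\le \bar\Phi_0$ holds $\mu$-a.e.\ on $\{\sd_B<N\}$.

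The only non-trivial step is the Jensen/averaging interchange, but as noted it reduces to a one-line $\varliminf$ argument; the rest is a direct transcription of Lemma~\ref{lem:HR2.9}. So I do not expect a real obstacle here.
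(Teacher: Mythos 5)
Your proposal is correct, but it takes a more roundabout route than the paper. The paper's proof is essentially two lines: it uses the \emph{statement} of Lemma~\ref{lem:HR2.9} as a black box, pairing $\Phi(\sd_B^2/2)$ and $\Phi_t$ against an arbitrary nonnegative $\rho\in L^2(\tilde\mu)$ vanishing off $\{\sd_B<N\}$, so that $(\Phi(\sd_B^2/2),\rho)_{\tilde\mu}\le\varliminf_{t\to0}(\Phi_t,\rho)_{\tilde\mu}$ (via a subsequence-compactness argument), then applies the same elementary Ces\`aro--liminf inequality you use, and finally passes to the weak-$L^2(\tilde\mu)$ limit $(\bar\Phi_0,\rho)_{\tilde\mu}$; the pointwise conclusion follows since $\rho$ is arbitrary. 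You instead re-run the internal machinery of Lemma~\ref{lem:HR2.9} for the averaged functions: Jensen via Lemma~\ref{lem:convexlem}, the upper estimate \eqref{eq:upper}, the Ces\`aro transfer, the weak limit, and then a second pass through the $\essinf$/contradiction argument to turn set-averaged bounds into the $\mu$-a.e.\ inequality. Both are valid; yours is self-contained but duplicates work that the paper's formulation of Lemma~\ref{lem:HR2.9} already encapsulates. One small point worth being aware of: because you test against $\bone_Y$ with respect to $\mu$ rather than $\tilde\mu$, the limit passage uses the weak${}^*$-$L^\infty(\mu)$ convergence of the chosen subsequence (note $\bone_Y\,d\mu/d\tilde\mu$ need not lie in $L^2(\tilde\mu)$); this convergence is available by the construction of the limits in the paper --- and is in fact also what the proof of Lemma~\ref{lem:HR2.9} itself uses --- whereas the paper's proof of the present lemma gets by with the weak-$L^2(\tilde\mu)$ convergence alone.
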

\begin{proof}
For any $\rho\in L^2(\tilde\mu)$ with $\rho\ge0$ $\mu$-a.e.\ and $\rho=0$ $\mu$-a.e.\ on $\{\sd_B<N\}$, Lemma~\ref{lem:HR2.9} implies
\[
(\Phi(\sd_B^2/2),\rho)_{\tilde\mu}\le \varliminf_{t\to0}(\Phi_t,\rho)_{\tilde\mu}.
\]
Therefore,
\[
(\Phi(\sd_B^2/2),\rho)_{\tilde\mu}
\le \varliminf_{t\to0}\frac1t\int_0^t (\Phi_s,\rho)_{\tilde\mu}\,ds
=\varliminf_{t\to0} (\bar\Phi_t,\rho)_{\tilde\mu}
\le(\bar\Phi_0,\rho)_{\tilde\mu}.
\]
This implies the desired claim.
\end{proof}
From Lemma~\ref{lem:HR2.9} and (C.4), we have $\bar\Phi_0=\sd_{B}^{2}/2$ on $\{\sd_{B}^{2}/2\le K\}\cap\{\sd_B<N\}=:Y_{K,N}$, independently of the choice of subsequence.
In particular, $\bar\Phi_t\bone_{Y_{K,N}}$ converges to $(\sd_{B}^{2}/2)\bone_{Y_{K,N}}$ weakly in $L^2(\tilde\mu)$ as $t\to 0$.
Furthermore, we have the following
\begin{lemma}
$\bar\Phi_0=\Phi(\sd_{B}^{2}/2)$ on $\{\sd_B<N\}$.
\end{lemma}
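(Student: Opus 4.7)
The preceding lemma establishes $\Phi(\sd_B^2/2) \le \bar\Phi_0$ on $\{\sd_B<N\}$, so only the reverse inequality remains. The strategy is to apply Jensen's inequality, exploiting the concavity of $\Phi = \Phi^K$ on $[0,\infty)$, combined with the already-established bound $\bar\phi^M_0 \le \sd_B^2/2$ on $\{\sd_B<N\}$, which by iterating Lemma~\ref{lem:itelem} holds for \emph{every} truncation level $M>0$. Recall also that $\Phi^K(\infty) \le LK$ and $\Phi^K(\infty)-\Phi^K(M) \to 0$ as $M\to\infty$ by monotone convergence.

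I would fix $M \ge K$ and set $v^M_s := (u_s\wedge M)\vee 0 \in [0,M]$. A straightforward casework on whether $u_s$ is negative, in $[0,M]$, or exceeds $M$ gives the pointwise bound
\[
\Phi^K(u_s) \le \Phi^K(v^M_s) + \bigl(\Phi^K(\infty)-\Phi^K(M)\bigr).
\]
Averaging over $s\in[0,t]$ and then applying Jensen's inequality to $\Phi^K|_{[0,\infty)}$ yields
\[
\bar\Phi^K_t \le \Phi^K(\bar v^M_t) + \bigl(\Phi^K(\infty)-\Phi^K(M)\bigr),\qquad \bar v^M_t := \tfrac{1}{t}\int_0^t v^M_s\,ds.
\]
Next, by another case analysis, $v^M_s \le \phi^M(u_s) + (\phi^M(u_s))^-$, so $\bar v^M_t \le \bar\phi^M_t + r^M_t$ with $r^M_t := t^{-1}\int_0^t (\phi^M(u_s))^-\,ds$. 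Since $(\phi^M(u_s))^-$ is nonzero only where $u_s<0$, and is bounded by $s\log 2$ on $E\setminus Z_s$ (via the analog of \eqref{eq:pp} for $\phi^M$) and by $|u_s| \le s(T_s\bone_B-1)$ on $Z_s$, the $L^2$-continuity $\|T_s\bone_B-\bone_B\|_2 \to 0$ combined with \eqref{eq:ZZ} forces $r^M_t \to 0$ in a suitable dual sense as $t\to 0$, uniformly in $M$.

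Finally, by a diagonal argument I extract a subsequence $\{t_{n'}\}$ along which $\bar\Phi^K_{t_{n'}}$, $\bar v^M_{t_{n'}}$, and $\bar\phi^M_{t_{n'}}$ all converge weakly in $L^2(\tilde\mu)$ and weak$^*$ in $L^\infty(\mu)$ for every $M \in \N$, to limits $\bar\Phi^K_0$, $\bar v^M_0$, $\bar\phi^M_0$. The bound $\bar v^M_t \le \bar\phi^M_t + r^M_t$ together with $\bar\phi^M_0 \le \sd_B^2/2$ gives $\bar v^M_0 \le \sd_B^2/2$ on $\{\sd_B<N\}$. Applying Lemma~\ref{lem:lsclem} to the concave extension $\hat\Phi^K$ (which agrees with $\Phi^K$ on $[0,\infty) \supset \mathrm{range}(\bar v^M_t)$) shows that any weak limit of $\Phi^K(\bar v^M_{t_{n'}})$ is at most $\Phi^K(\bar v^M_0) \le \Phi^K(\sd_B^2/2)$ on $\{\sd_B<N\}$. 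Passing to the limit in the Jensen bound gives $\bar\Phi^K_0 \le \Phi^K(\sd_B^2/2) + (\Phi^K(\infty)-\Phi^K(M))$, and letting $M\to\infty$ completes the proof.

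The main obstacle is the control in the second step, namely bounding $|u_s|$ over the exceptional set $Z_s$ that arises because $\{T_t\}_{t>0}$ is not Markovian and $T_s\bone_B$ may exceed $1$. The elementary inequality $\log y \le y-1$ together with $L^2$-continuity of the semigroup at $t=0$ provides the needed control; but weaving this with the diagonalization so that $\bar\Phi^K_{t_{n'}}$, $\bar v^M_{t_{n'}}$, and $\bar\phi^M_{t_{n'}}$ converge along a \emph{common} subsequence, simultaneously for all $M$, requires some bookkeeping parallel to what has already been done for $\bar\phi_0$ and $\bar\Phi_0$ in the preceding subsections.
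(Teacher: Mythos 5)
Your proof is correct, and its skeleton matches the paper's: only the inequality $\bar\Phi^K_0\le\Phi^K(\sd_B^2/2)$ on $\{\sd_B<N\}$ needs proving, and both arguments truncate at a level $M$, exploit concavity (Jensen) through $\Phi^K$ or $\hat\Phi^K$, control the exceptional sets $Z_s$ via \eqref{eq:ZZ}, pass to weak limits with Lemma~\ref{lem:lsclem}, and conclude by letting $M\to\infty$ using $\Phi^K(\infty)-\Phi^K(M)\to0$. The technical realization differs in two respects. First, you truncate by $v^M_s=(u_s\wg M)\vee0$ and use the elementary pointwise bound $\Phi^K(u_s)\le\Phi^K(v^M_s)+\bigl(\Phi^K(\infty)-\Phi^K(M)\bigr)$, whereas the paper composes with $\Phi^M$ and invokes (C.7); your variant then needs the comparison $v^M_s\le\phi^M(u_s)+(\phi^M(u_s))^-$ and the (correct) observation that the negative part is at most $s\log2$ off $Z_s$ and at most $s(T_s\bone_B-\bone_B)$ on $Z_s$, a bound that is uniform in $M$ and vanishes in the dual sense by \eqref{eq:ZZ} and $L^2$-continuity. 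Second, and more substantially, you feed in only the inequality $\bar\phi^M_0\le\sd_B^2/2$ on $\{\sd_B<N\}$ (the output of iterating Lemma~\ref{lem:itelem}) together with monotonicity of $\hat\Phi^K$, so your proof of the $\le$ direction bypasses the two preceding lemmas; the paper instead inserts the exact identification $\bar\Phi^M_0=\sd_B^2/2$ on $Y_{M,N}$ (which rests on those lemmas) into the Lemma~\ref{lem:lsclem} step. Either input suffices, since only an upper bound on the inner quantity is needed; what your route buys is that the upper estimate for $\bar\Phi_0$ becomes independent of the lower-bound lemmas (still needed, as you note, for the reverse inequality), at the modest cost of the extra diagonalization ensuring $\bar\Phi^K_{t_{n'}}$, $\bar v^M_{t_{n'}}$, and $\bar\phi^M_{t_{n'}}$ converge along a common subsequence for all $M\in\N$, and of checking that the weak-$L^2(\tilde\mu)$ limit of $\bar\phi^M_{t_{n'}}$ agrees with a limit $\bar\phi^M_0$ covered by the iteration lemma---the same bookkeeping the paper already performs for $\bar\phi_0$ and $\bar\Phi_0$.
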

\begin{proof}
We specify the dependency on $K$ and write $\Phi_t^K$ instead of $\Phi_t$.
From what we have proven, $\bar\Phi^M_t\bone_{Y_{M,N}}$ converges to $(\sd_{B}^{2}/2)\bone_{Y_{M,N}}$ weakly in $L^2(\tilde\mu)$ and $\bar\Phi^M_0=\sd_{B}^{2}/2$ on $Y_{M,N}$ for all $M>0$.
From (C.7) and the 1-Lipschitz continuity of $\Phi^K$,
\begin{align}
\bar\Phi^K_T
&=\frac1T\int_0^T \Phi^K(u_t)\,dt\notag\\
&\le \frac1T\int_0^T \Phi^K(\Phi^M(u_t))\,dt +(\Phi^K(\infty)-\Phi^K(M))\notag\\
&\le \frac1T\int_0^T \Phi^K(\Phi^M(u_t)\bone_{E\setminus Z_t})\,dt +\frac1T\int_0^T|\Phi^M(u_t)\bone_{Z_t}|\,dt+(\Phi^K(\infty)-\Phi^K(M)) \notag \\
&\le \frac1T\int_0^T \Phi^K(\Phi^M(u_t)\bone_{E\setminus Z_t})\,dt +\frac{LM
}{T}\int_0^T\bone_{Z_t}\,dt+(\Phi^K(\infty)-\Phi^K(M)).
\label{eq:HR2.14a}
\end{align}
Since $\hat\Phi^K$ is concave and 1-Lipschitz, the first term is estimated as follows:
\begin{align}
\frac1T\int_0^T \Phi^K(\Phi^M(u_t)\bone_{E\setminus Z_t})\,dt
&=\frac1T\int_0^T \hat\Phi^K(\Phi^M(u_t)\bone_{E\setminus Z_t})\,dt\notag\\
&\le \hat\Phi^K\left(\frac1T\int_0^T \Phi^M(u_t)\bone_{E\setminus Z_t}\,dt\right)\notag\\
&\le \hat\Phi^K\left(\frac1T\int_0^T \Phi^M(u_t)\,dt\right)+\left|\frac1T\int_0^T \Phi^M(u_t)\bone_{Z_t}\,dt\right|\notag\\
&\le \hat\Phi^K(\bar\Phi^M_T)+\frac{LM}T\int_0^T \bone_{Z_t}\,dt.
\label{eq:HR2.14b}
\end{align}
Combining \eqref{eq:HR2.14a} and \eqref{eq:HR2.14b} and letting $T\to0$ along a suitable subsequence, we have
\[
\bar\Phi^K_0 \le \hat\Phi^K(\sd_B^2/2)+(\Phi^K(\infty)-\Phi^K(M))
\quad\text{on }Y_{M,N}.
\]
Here, we also used Lemma~\ref{lem:lsclem} and \eqref{eq:ZZ} for the right-hand side.
Letting $M\to\infty$, we obtain $\bar\Phi^K_0 \le \Phi^K(\sd_B^2/2)$ on $\{\sd_B<N\}$.
The inequality in the other direction has been proven already.
\end{proof}
Therefore, $\bar\Phi_{t}\bone_{\{\sd_B<N\}} $ converges both in the weak-$L^{2}(\tilde{\mu})$ sense and in the weak${}^{\ast}$-$L^{\infty}(\mu)$ sense as $t \to 0$, and the limit $\bar{\Phi}_{0}\bone_{\{\sd_B<N\}}$ is equal to $\Phi(\sd_{B}^{2}/2)\bone_{\{\sd_B<N\}}$.

\subsection{Application of the Tauberian theorem}
 In the following, $\{ T_{t}^{0}\}_{t>0}$ denotes the Markovian semigroup corresponding to the strong local Dirichlet form $(\cE^{0},\D)$, and $\cL^{0}$ denotes the generator of $\{ T_t^0\}_{t>0}$ on $L^2(\mu)$. 
Note that $\cE^0(T_t^0 f) \le (2et)^{-1}\|f\|_2^2$ holds for all $f \in L^2(\mu)$ and $t>0$, from the spectral decomposition theorem.
For $k\in\N$, we define 
\[
\hat\chi_k=\chi_k\cdot((0\vee k(N-\sd_B))\wg1)\in\D_{E_k\cap\{\sd_B<N\},b}.
\]
We use $(\cdot,\cdot)_{\hat\chi_k^2\cdot\mu}$ to denote the $L^{2}$-inner product with respect to the measure $\hat\chi_k^{2}\cdot\mu$ on $E$. 

 \begin{lemma} \label{lem:wiener2}
For $\tau \in (0,1)$ and $Y \in \cB_{0}$,
\begin{align*}
\lim_{t \to 0} (T^{0}_{\tau-t}\bone_{Y},\Phi_{t} )_{\hat\chi_k^2\cdot\mu}=(T^{0}_{\tau} \bone_{Y},\bar{\Phi}_{0})_{\hat\chi_k^2\cdot\mu}.
\end{align*}
\end{lemma}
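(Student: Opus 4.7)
The plan is to split into a semigroup strong-continuity step followed by a Tauberian passage from Ces\`aro averages to pointwise values. First, by strong continuity of $\{T^{0}_{t}\}_{t>0}$ on $L^{2}(\mu)$, we have $T^{0}_{\tau-t}\bone_{Y}\to T^{0}_{\tau}\bone_{Y}$ in $L^{2}(\mu)$ as $t\to 0$. Combined with the uniform bound $\|\Phi_{t}\|_{\infty}\le LK$ from (C.3) and the boundedness of $\hat\chi_{k}^{2}$ supported in $E_{k}$, Cauchy--Schwarz gives
\[
(T^{0}_{\tau-t}\bone_{Y}-T^{0}_{\tau}\bone_{Y},\Phi_{t})_{\hat\chi_{k}^{2}\cdot\mu}\to 0.
\]
It therefore suffices to show $(\psi,\Phi_{t})_{\mu}\to(\psi,\bar\Phi_{0})_{\mu}$, where $\psi:=T^{0}_{\tau}\bone_{Y}\cdot\hat\chi_{k}^{2}\in L^{1}(\mu)\cap L^{\infty}(\mu)$ is nonnegative and supported in $\{\sd_{B}<N\}\cap E_{k}$.

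Next, by Fubini, $t^{-1}\int_{0}^{t}(\psi,\Phi_{s})_{\mu}\,ds=(\psi,\bar\Phi_{t})_{\mu}$. The weak-$L^{2}(\tilde\mu)$ convergence $\bar\Phi_{t}\bone_{\{\sd_{B}<N\}}\to\bar\Phi_{0}\bone_{\{\sd_{B}<N\}}$ established at the end of the previous subsection---together with $\psi\in L^{2}(\tilde\mu)$ being supported in $\{\sd_{B}<N\}$---yields the Ces\`aro limit $t^{-1}\int_{0}^{t}(\psi,\Phi_{s})_{\mu}\,ds\to(\psi,\bar\Phi_{0})_{\mu}$. In addition, the liminf direction of the desired pointwise limit follows from Lemma~\ref{lem:HR2.9}: any weak-$L^{2}(\tilde\mu)$ subsequential limit $\Phi_{0}$ of $\Phi_{t}$ satisfies $\Phi_{0}\ge\bar\Phi_{0}$ on $\{\sd_{B}<N\}$, and since $\psi\ge 0$ is supported there,
\[
\liminf_{t\to 0}(\psi,\Phi_{t})_{\mu}\ge(\psi,\bar\Phi_{0})_{\mu}.
\]

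To upgrade the Ces\`aro limit to a pointwise one, I would carry out a Tauberian step. The map $t\mapsto(\psi,\Phi_{t})_{\mu}$ is continuous on $(0,\tau]$ by the $C^{1}$-regularity of $t\mapsto T_{t}\bone_{B}$ in $L^{2}(\mu)$ and the Lipschitz property of $\Phi$, so any hypothetical exceedance $(\psi,\Phi_{t_{n}})_{\mu}\ge(\psi,\bar\Phi_{0})_{\mu}+\varepsilon$ along a sequence $t_{n}\to 0$ would propagate by continuity to an interval of $t$ around $t_{n}$; combined with the liminf bound (which prevents the integrand from being substantially smaller than $(\psi,\bar\Phi_{0})_{\mu}$ elsewhere), this would eventually contradict the convergence of the Ces\`aro averages, forcing $\limsup_{t\to 0}(\psi,\Phi_{t})_{\mu}\le(\psi,\bar\Phi_{0})_{\mu}$.

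The main obstacle is making this Tauberian step quantitative: continuity of $(\psi,\Phi_{t})_{\mu}$ alone does not in general force a Ces\`aro limit plus a one-sided bound to coincide with the pointwise limit. The contradiction sketched above requires a scale-uniform modulus of continuity (equivalently, a one-sided Lipschitz or BV bound) for $t\mapsto(\psi,\Phi_{t})_{\mu}$ near $0$. Such a bound can plausibly be extracted by testing the $\dl$-regularized identity of Lemma~\ref{lem:lem3.2.1} against $\rho$ proportional to $\hat\chi_{k}^{2}T^{0}_{\tau}\bone_{Y}$ and exploiting the uniform $\D$-bounds of Lemma~\ref{lem:bddlem} to control the right-hand side as $\dl\to 0$; this is the delicate analytic input the proof hinges on.
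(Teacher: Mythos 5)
Your overall architecture is the right one, and it matches the paper's: reduce the claim to a Ces\`aro limit for $t\mapsto(\,\cdot\,,\Phi_t)$ plus a Tauberian condition, and conclude via Wiener's Tauberian theorem. But the decisive ingredient --- a scale-uniform one-sided bound of the form $f(t)-f(s)\le M(t-s)/s$ for $0<s<t\le t_0$ --- is exactly what you leave as ``plausible,'' and your preliminary reduction (freezing the test function at $\psi=\hat\chi_k^2T^0_{\tau}\bone_Y$) actually blocks the mechanism by which the paper proves it. The paper works with $f(t)=\bigl(T^0_{\tau-t}\bone_Y,\Phi_t-\Phi(\infty)\bigr)_{\hat\chi_k^2\cdot\mu}$, keeping the moving factor $T^0_{\tau-t}\bone_Y$. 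When one differentiates the $\dl$-regularization $f^\dl(r)$, the derivative of the moving factor contributes $J_2=\cE^0\bigl(T^0_{\tau-r}\bone_Y,\hat\chi_k^2(\Phi_r^\dl-\Phi(e_r^\dl))\bigr)$, and this term cancels \emph{exactly} the cross term $J_3=\tfrac12\int_E((\phi')_r^\dl)^2\hat\chi_k^2\,\Gm\bigl(T^0_{\tau-r}\bone_Y,u_r^\dl-e_r^\dl\bigr)\,d\mu$ produced by Lemma~\ref{lem:R} with $\rho=\hat\chi_k^2T^0_{\tau-r}\bone_Y$; what survives of $J_2-J_3$ is only $\tfrac12\int_E(\Phi_r^\dl-\Phi(e_r^\dl))\Gm(T^0_{\tau-r}\bone_Y,\hat\chi_k^2)\,d\mu$, which is bounded by the spectral smoothing estimate $\cE^0(T^0_{\tau-r}\bone_Y)\le(e\tau)^{-1}\mu(Y)$ for $r\le\tau/2$, while every remaining gradient-square term carries the weight $((\phi')_r^\dl)^2\hat\chi_k^2T^0_{\tau-r}\bone_Y$ and is absorbed by the negative term $-\tfrac1{2r}Q_r$ from the identity. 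This yields $\tfrac{d}{dr}f^\dl(r)\le C_5/r+C_8$, hence the Tauberian condition after integrating and letting $\dl\to0$.

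With your frozen $\psi$ there is no analogue of $J_2$: differentiating $(\psi,\Phi_r^\dl)_\mu$ via Lemma~\ref{lem:lem3.2.1}/\ref{lem:R} still produces the cross term $\tfrac12\int_E((\phi')_r^\dl)^2\hat\chi_k^2\,\Gm\bigl(T^0_\tau\bone_Y,u_r^\dl-e_r^\dl\bigr)\,d\mu$, and nothing cancels it. You cannot absorb it into $-\tfrac1{2r}Q_r$ by Cauchy--Schwarz, because the weight in $Q_r$ is $\hat\chi_k^2T^0_\tau\bone_Y$, which has no positive lower bound; and Lemma~\ref{lem:bddlem} supplies only \emph{time-averaged} $\cE^0$-bounds with weight $\chi_k^2$, not the pointwise-in-$r$ control of $\int_E((\phi')_r^\dl)^2\chi_k^2\Gm(u_r^\dl-e_r^\dl)\,d\mu$ that a modulus of continuity would require (indeed the lack of pointwise control is the very reason a Tauberian argument is needed). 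Your Ces\`aro limit plus the liminf bound from Lemma~\ref{lem:HR2.9} plus continuity cannot substitute for the one-sided Lipschitz estimate: continuity still permits thin upward bumps of $(\psi,\Phi_t)_\mu$ along a sparse set of $t$ that leave the averages unchanged. The repair is to follow the paper: keep $T^0_{\tau-t}\bone_Y$ inside $f$, establish $\tfrac{d}{dr}f^\dl(r)\le C_5/r+C_8$ through the $J_2-J_3$ cancellation, and apply Wiener's Tauberian theorem; the strong-continuity replacement of $T^0_{\tau-t}\bone_Y$ by $T^0_\tau\bone_Y$, which you put first, is then only used (if at all) at the very end.
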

\begin{proof}
Let $f(t)=\bigl(T^{0}_{\tau-t}\bone_{Y},\Phi_{t}-\Phi(\infty)\bigr)_{\hat\chi_k^2\cdot\mu}$
for $t\in(0,\tau)$.
We will confirm the following two conditions:
\begin{enumerate}
\item $T^{-1}\int_{0}^{T}f(s)\,ds$ converges to $\bigl(T_{\tau}^{0}\bone_{Y},\bar{\Phi}_{0}-\Phi(\infty)\bigr)_{\hat\chi_k^2\cdot\mu}$ as $T \to 0$,
\item there exist $M>0$ and $t_0\in(0,\tau)$ such that $f(t)-f(s) \le M(t-s)/s$ for any $0<s<t\le t_{0}$.
\end{enumerate}
Under these conditions, we can apply  
Wiener's Tauberian theorem (see, e.g., \cite[Lemma 3.11]{R})
to obtain 
$\lim_{t\to 0}f(t)=(T_{\tau}^0\bone_{Y},\bar{\Phi}_{0}-\Phi(\infty))_{\hat\chi_k^2\cdot\mu}$. Combining this equality and the identity 
\[
\lim_{t \to 0}(T_{\tau-t}^{0}\bone_{Y},\Phi(\infty))_{\hat\chi_k^2\cdot\mu}=(T_{\tau}^{0}\bone_{Y},\Phi(\infty))_{\hat\chi_k^2\cdot\mu},\] 
we obtain the desired claim.

Condition (i) is proved as follows:
\begin{align*}
 &\left|\frac{1}{T} \int_{0}^{T} f(t)\,dt-\bigl(T_{\tau}^{0}\bone_{Y},\bar{\Phi}_{0}-\Phi(\infty) \bigr)_{\hat\chi_k^2\cdot\mu}  \right| \\ 
  &\le \frac{1}{T} \int_{0}^{T}\left|\bigl( T_{\tau-t}^{0}\bone_{Y}-T_{\tau}^{0}\bone_{Y}, \Phi_{t} -\Phi(\infty) \bigr)_{\hat\chi_k^2\cdot\mu} \right|\,dt + \left| \bigl(T_{\tau}^{0}\bone_{Y}, \bar{\Phi}_{T}-\bar{\Phi}_{0}  \bigr)_{\hat\chi_k^2\cdot\mu} \right| \\
   &\le \frac{2KL}{T}\int_{0}^{T} \|\bone_{Y}-T_{t}^{0}\bone_{Y} \|_{1}\, dt + \left| \bigl(T_{\tau}^{0}\bone_{Y}, \bar{\Phi}_{T}-\bar{\Phi}_{0}  \bigr)_{\hat\chi_k^2\cdot\mu} \right| \\
    & \to 0 \qquad\text{as }T\to0.
\end{align*}
Here, we used the fact that $\{T_{t}^{0}\}_{t>0}$ extends to a strongly continuous semigroup on $L^1(\mu)$. For condition (ii), we fix $\dl \in (0,1)$.
The function $f^\dl(r):=\bigl( T^{0}_{\tau-r}\bone_{Y},\Phi_{r}^{\dl}-\Phi(e_{r}^{\dl}) \bigr)_{\hat\chi_k^2\cdot\mu}$
is continuously differentiable on $(0,\tau)$ and  
 \begin{align*}
 \frac{d}{dr}f^\dl(r)
&=(T^{0}_{\tau-r}\bone_{Y},\del_{r}\Phi_{r}^{\dl}-\del_{r}\Phi(e_{r}^{\dl}))_{\hat\chi_k^2\cdot\mu}
+ \bigl(\del_{r}T^{0}_{\tau-r}\bone_{Y},\Phi_{r}^{\dl}-\Phi(e_{r}^{\dl})\bigr)_{\hat\chi_k^2\cdot\mu}\\
&=(T^{0}_{\tau-r}\bone_{Y},\del_{r}\Phi_{r}^{\dl})_{\hat\chi_k^2\cdot\mu}- \frac{1}{r} (T^{0}_{\tau-r}\bone_{Y},\Psi(e_{r}^{\dl}))_{\hat\chi_k^2\cdot\mu} - (\cL^0T^{0}_{\tau-r}\bone_{Y},\Phi_{r}^{\dl}-\Phi(e_{r}^{\dl}))_{\hat\chi_k^2\cdot\mu}\\
&\le(T^{0}_{\tau-r}\bone_{Y},\del_{r}\Phi_{r}^{\dl})_{\hat\chi_k^2\cdot\mu}
+ \cE^0(T^{0}_{\tau-r}\bone_{Y},\hat\chi_k^2(\Phi_{r}^{\dl}-\Phi(e_{r}^{\dl})))
=:J_1+J_2.
\end{align*}
Let $r\in(0,\tau/2]$ and  $Q_r=\int_E \Gm ( u_{r}^{\dl}-e_{r}^{\dl})((\phi')_{r}^{\dl})^{2}\hat\chi_k^{2}T_{\tau-r}^{0}\bone_{Y} \,d \mu$.
By using Lemma~\ref{lem:R} with $f=\bone_{B}$ and $ \rho=\hat\chi_k^{2}T_{\tau-r}^{0}\bone_{Y}$,
\begin{align*}
J_{1}
&\le\frac{1}{r}(\hat\chi_k^{2}T_{\tau-r}^{0}\bone_{Y},\Psi_{r}^{\dl})_\mu
-  \cE^{0} \bigl( ((\phi')_{r}^{\dl})^{2} \hat\chi_k^{2}T_{\tau-r}^{0}\bone_{Y}, u_{r}^{\dl}-e_{r}^{\dl} \bigr)\\
&\qad-\frac{1}{2r} \int_E \Gm ( u_{r}^{\dl}-e_{r}^{\dl})((\phi')_{r}^{\dl})^{2}\hat\chi_k^{2}T_{\tau-r}^{0}\bone_{Y} \,d \mu \\
&\qad 
+\left(\frac{Cr}{K}+1 \right)\int_E \Gm ( u_{r}^{\dl}-e_{r}^{\dl})((\phi')_{r}^{\dl})^{2}\hat\chi_k^{2}T_{\tau-r}^{0}\bone_{Y} \,d \mu +r\cE^0(\hat\chi_k^{2}T_{\tau-r}^{0}\bone_{Y}) + R(r,k)\\
&\le \frac{C_5}{r}-\cE^{0} \bigl( ((\phi')_{r}^{\dl})^{2} \hat\chi_k^{2}T_{\tau-r}^{0}\bone_{Y}, u_{r}^{\dl}-e_{r}^{\dl} \bigr)+
\left(\frac{Cr}{K}+1-\frac{1}{2r}\right) Q_r+C_6,
\end{align*}
where $C_5=KL \mu(Y)$ and $C_6=(\tau/2)\sup_{s\in[\tau/2,\tau)}\cE^0(\hat\chi_k^2T_s^0 \bone_Y)+R(\tau/2,k)$.
We also have
\begin{align*}
&\cE^{0} \bigl( ((\phi')_{r}^{\dl})^{2} \hat\chi_k^{2}T_{\tau-r}^{0}\bone_{Y}, u_{r}^{\dl}-e_{r}^{\dl} \bigr)\\
&=\frac{1}{2} \int_E ((\phi')_{r}^{\dl})^{2}\hat\chi_k^{2} \Gm \bigl(T^{0}_{\tau-r}\bone_{Y}, u_{r}^{\dl}-e_{r}^{\dl} \bigr)\,d\mu  +  \int_E \hat\chi_k ((\phi')_{r}^{\dl})^{2} T^{0}_{\tau-r}\bone_{Y}   \Gm \bigl( \hat\chi_k, u_{r}^{\dl}-e_{r}^{\dl}\bigr) \,d\mu  \\*
&\qad + \int_E \hat\chi_k^{2} (\phi')_{r}^{\dl} (\phi'')_{r}^{\dl} T^{0}_{\tau-r}\bone_{Y}  \Gm (u_{r}^{\dl}-e_{r}^{\dl} ) \,d\mu  \\ 
&=: J_3+J_4+J_5.
\end{align*}
Accordingly,
\begin{align*}
\frac{d}{dr}f^\dl(r)
&\le J_2-J_3-J_4-J_5+\frac{C_5}r+\left(\frac{Cr}{K}+1-\frac{1}{2r} \right)Q_r+C_6.
\end{align*}
Then,
\begin{align*}
J_2-J_3
&=\frac{1}{2}  \int_E \Gm({T^{0}_{\tau-r}\bone_{Y}, \hat\chi_k^{2}})\bigl(\Phi_{r}^{\dl}-\Phi(e_{r}^{\dl}) \bigr)d\mu \\
&\le 2KL\cE^0(T^{0}_{\tau-r}\bone_{Y})^{1/2}\cE^0(\hat\chi_k^{2})^{1/2}\\
&\le4KL(e\tau)^{-1/2}\mu(Y)^{1/2}\cE^0(\hat\chi_k)^{1/2} 
=: C_7,
\end{align*}
\[
|J_4|
\le \frac12 \int_E \left\{\Gm(\hat\chi_k)+\hat\chi_k^2((\phi')_{r}^{\dl})^{2}\Gm(u_{r}^{\dl}-e_{r}^{\dl})\right\}T^{0}_{\tau-r}\bone_{Y}\, d\mu
\le  \cE^0(\hat\chi_k)+ \frac{Q_r}{2},
\]
and
\[
|J_5|\le \frac{C}{K}Q_r.
\]
Therefore, we have
\[
\frac{d}{dr}f^\dl(r)\le \left(\frac{C+Cr}{K}+\frac32-\frac1{2r}\right)Q_r+\frac{C_5}{r}+C_6+C_7+\cE^0(\hat\chi_k).
\]
Let $C_8$ denote $C_6+C_7+\cE^0(\hat\chi_k)$.
If $r\le t_0:=\min\{(2(C\tau+C)/K+3)^{-1},\tau/2\}$, then we have $\frac{d}{dr}f^\dl(r)\le C_5/r+C_8$.
From this,
\[
f^\dl(t)-f^\dl(s)
\le  \int_s^t \left(\frac{C_5}{r}+C_8\right)dr
\le \frac{C_5+C_8 t_0 }{s}(t-s),
\quad 0<s<t\le t_0.
\]
By letting $\dl\to0$, we obtain the desired assertion.
\end{proof}
Let $\tau>0$, $k\in\N$, and $Y \in \cB_{0}$. 
It holds that
\begin{equation}
\int_{Y}\hat\chi_k^{2}\Phi_{t}\,d \mu=(\Phi_{t}, T_{\tau-t}^{0} \bone_{Y})_{\hat\chi_k^2\cdot\mu}
+(\hat\chi_k^{2}\Phi_{t}, \bone_{Y}-T_{\tau-t}^{0}\bone_{Y})_\mu \label{eq:ast}
\end{equation}
for every $t \in (0,\tau]$. From Lemma~\ref{lem:wiener2}, 
\[
\lim_{\tau \to0} \lim_{t \to 0} (\Phi_{t}, T_{\tau-t}^0 \bone_{Y})_{\hat\chi_k^2\cdot\mu}
=(\bar{\Phi}_{0},\bone_{Y})_{\hat\chi_k^2\cdot\mu}.
\]
For the second term of \eqref{eq:ast}, we have 
\[
\lim_{\tau \to 0} \lim_{t \to 0}(\hat\chi_k^{2}\Phi_{t},\bone_{Y}-T^{0}_{\tau-t}\bone_{Y})_\mu=0
\]
by the boundedness of $\Phi_{t}$ and the strong continuity of $\{T_{t}^{0} \}$. Therefore, 
\begin{equation}
\lim_{t \to 0}\int_{Y} \hat\chi_k^{2}\Phi_{t} \,d\mu=\int_{Y} \hat\chi_k^{2}\bar{\Phi}_{0}\,d\mu  = \int_{Y} \hat\chi_k^{2}\Phi (\sd_{B}^{2}/2) \,d\mu. \label{eq:ast2}
\end{equation}
Using this equality, we finish the proof of Theorem~\ref{th:lower}. 
For $\eps\in(0,(N-\sd(A,B))/2)$, define
 $A_\eps=\{x \in A \mid \sd_{B}(x) \le \sd(A,B)+\eps\}$. 
Then, for sufficiently large $k$, $\hat\chi_k=\chi_k=1$ $\mu$-a.e.\ on $A_\eps\cap\hat E_k$.
 Using the convexity of $\Phi^{K}(-t\log(\cdot))$ for small $t$
(Lemma~\ref{lem:convexlem}), we have
 \begin{align*}
\varlimsup_{t \to 0} \Phi (-t\log P_{t}(A,B)) 
 &\le \varlimsup_{t \to 0} \Phi (-t\log P_{t}(A_\eps, B) ) \\
&\le \varlimsup_{t \to 0}\frac{1}{\mu( A_\eps)} \int_{ A_\eps} \Phi(-t\log T_{t}\bone_{B} )\,d\mu \\
&= \varlimsup_{t \to 0}  \frac{1}{\mu( A_\eps)}\left( \int_{ A_\eps \cap \hat E_{k}}\hat\chi_k^{2}\Phi_{t} \,d\mu+ \int_{ A_\eps \setminus \hat E_{k} } \Phi_{t}\,d\mu \right) \\
&\le \varlimsup_{t \to 0}  \frac{1}{\mu(A_\eps)} \int_{ A_\eps} \hat\chi_k^{2}\Phi_{t} \,d\mu+\frac{KL}{\mu( A_\eps)} \mu(A_\eps\setminus \hat E_{k})\\
&= \frac{1}{\mu( A_\eps)} \int_{A_\eps } \hat\chi_k^{2} \Phi \left( \frac{\sd_{B}^{2}}{2} \right)\,d\mu +\frac{KL}{\mu(A_\eps)} \mu( A_\eps\setminus \hat E_{k}) \qquad\text{(from \eqref{eq:ast2})} \\
&\le  \frac{\left(\sd(A,B)+\eps \right)^{2}}{2} +\frac{KL}{\mu( A_\eps)} \mu( A_\eps\setminus \hat E_{k}).
\end{align*}
Letting $k \to \infty$, we have
\[
\varlimsup_{t \to 0} \Phi (-t\log P_{t}(A,B) ) \le \frac{\left(\sd(A,B)+\eps \right)^{2}}{2}.
\]	
Since $\eps$ is arbitrary and $\lim_{K \to \infty}\Phi^{K}(x)=x$ for each $x \in \R$, \eqref{eq:lower} holds.
This completes the proof of Theorem~\ref{th:lower}.
\section{Proof of auxiliary propositions and examples}
We prove Propositions~\ref{prop:infty} and \ref{prop:B2}.
\begin{proof}[Proof of Proposition~\ref{prop:infty}]
Let $Y=\{\sd_B=\infty\}$. From \cite[Proposition~5.1]{AH}, $Y$ coincides with $\{T^0_t \bone_B=0\}$ up to $\mu$-null sets for all $t>0$.
Then, for $t>0$ and $f\in L^2(\mu)$ with $f\ge0$ $\mu$-a.e.,
\[
  (T^0_t(\bone_Y f),T^0_1\bone_B)_\mu=(\bone_Y f,T^0_{t+1}\bone_B)_\mu=0.
\]
This implies that 
\begin{equation}\label{eq:invariant}
T^0_t(\bone_Y f)=0\quad\mu\text{-a.e.\ on }E\setminus Y
\end{equation}
because $T^0_1\bone_B>0$ $\mu$-a.e.\ on $E\setminus Y$.
Equation \eqref{eq:invariant} now holds for all $f\in L^2(\mu)$; thus, $Y$ is an invariant set with respect to $(\cE^0,\D)$, from \cite[Lemma~1.6.1]{FOT}.
From \cite[Theorem~1.6.1]{FOT}, $\bone_Y f\in\D$ for every $f\in\D$ and 
\[
\cE^0(f,g)=\cE^0(\bone_Y f,\bone_Y g)+\cE^0(\bone_{E\setminus Y} f,\bone_{E\setminus Y} g),\quad f,g\in\D.
\]
Moreover, $\nb(\bone_Y f)=\bone_Y\nb f$ by Proposition~\ref{prop:D}. 
By using these properties, we can confirm that $\cE(\bone_{E\setminus Y} f,\bone_Y g)=0$ for $f,g\in\D$.
From \cite[Lemma~3.1]{Kuw}, $E\setminus Y$ is weakly invariant with respect to $(\cE,\D)$. That is, $T_t (\bone_{E\setminus Y}f)=0$ $\mu$-a.e.\ on $Y$ for all $t>0$ and $f\in L^2(\mu)$. Letting $f=\bone_B$, we obtain $T_t\bone_B=0$ $\mu$-a.e.\ on $Y$ since $B \subset E \setminus Y$.
Thus, for $A\in\cB_0$ satisfying $\sd(A,B)=\infty$, we have $\mu(A\setminus Y)=0$ and so $P_t(A,B)=0$ holds.
\end{proof}
\begin{proof}[Proof of Proposition~\ref{prop:B2}]
Let $f\in \bigcup_{k=1}^\infty \D_{E_k,b}$ with $\|f\|_2=1$.
For $x\ge0$ and $y\ge0$,
\begin{align*}
xy
&\le \int_0^x \log^+ s\,ds+\int_0^y e^t\,dt\\
&\le \int_0^x \log^+ x\,ds+e^y-1\\
&= x\log^+ x+e^y-1.
\end{align*}
Applying this inequality to $x=f^2$ and $y=\dl |b_2-c_2|_H^2$ and integrating it provides 
\[
\dl\int_E |b_2-c_2|_H^2 f^2\,d\mu
\le \int_E f^2\log^+{f^2}\,d\mu
+\int_E \bigl( e^{\dl|b_2-c_2|_H^2}-1 \bigr)d\mu.
\]
Then,
\begin{align*}
\int_E |b_2-c_2|_H f^2\,d\mu
&\le\left(\int_E |b_2-c_2|_H^2 f^2\,d\mu\right)^{1/2}\\
&\le \dl^{-1/2}\left(\int_E f^2\log^+{f^2}\,d\mu\right)^{1/2}
+\dl^{-1/2}\left\{\int_E \bigl( e^{\dl|b_2-c_2|_H^2}-1 \bigr)d\mu\right\}^{1/2}.
\end{align*}
Thus, (B.2$'$) holds with $\gm=\dl^{-1/2}$ and $\lm_\eps=\hat\lm_\eps+\dl^{-1/2}\left\{\int_E \bigl( e^{\dl|b_2-c_2|_H^2}-1 \bigr)d\mu\right\}^{1/2}$ for $\eps>0$.
\end{proof}
We discuss other sufficient conditions of Assumption~\ref{as:1}, (B.1), (B.2)$_{A,B}$, and (B.2$'$).

For $\a>0$, let $\cT_\a$ denote the set of all real measurable functions $\psi$ on $E$ satisfying the following requirement: there exists a measurable nest $\{E_k\}_{k=1}^\infty$ and $\tilde\lm_\a\ge0$ such that $\psi\in L^1_\loc(\mu,\{E_k\})$ and 
\begin{equation}\label{eq:cTa}
\int_E |\psi| f^2\,d\mu \le \a \cE^0(f)+\tilde\lm_\a\|f\|_2^2,\quad f\in\bigcup_{k=1}^\infty \D_{E_k,b}.
\end{equation}
Also, let $\cT_{0+}=\bigcap_{\a>0}\cT_\a$ and $\cT_\infty=\bigcup_{\a>0}\cT_\a$.
Clearly, $L^\infty(\mu)\subset \cT_{0+}$.
\begin{lemma}\label{lem:cT}
Let $a$ be an $H$-valued measurable function on $E$, and $\a>0$. 
If $|a|_H^2\in \cT_\a$ with a measurable nest $\{E_k\}_{k=1}^\infty$ and $\tilde\lm_\a\ge0$, then the following inequalities hold for $f,g\in\bigcup_{k=1}^\infty \D_{E_k,b}$ and $\eps>0$:
\begin{align}
\left|\int_E (a,\nb f)_H g\,d\mu\right|&\le\left(2\cE^0(f)\right)^{1/2}\left(\a\cE^0(g)+\tilde\lm_\a\|g\|_2^2\right)^{1/2},\label{eq:cT1}\\
\left|\int_E (a,\nb f)_H f\,d\mu\right|&\le\sqrt{2\a}\cE^0(f)+\frac{\tilde\lm_\a}{\sqrt{2\a}}\|f\|_2^2,\label{eq:cT2}\\
\int_E |a|_H f^2\,d\mu&\le \eps\cE^0(f)+\left(\frac{\eps \tilde\lm_\a}{\a}+\frac{\a}{4\eps}\right)\|f\|_2^2.\label{eq:cT3}
\end{align}
\end{lemma}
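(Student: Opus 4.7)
The plan is to deduce all three inequalities directly from the defining property \eqref{eq:cTa} of $\cT_\a$ by repeated application of the Cauchy--Schwarz and weighted AM--GM inequalities; no deeper input is needed, so this is really a bookkeeping exercise.

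For \eqref{eq:cT1}, I would first use the pointwise bound $|(a,\nb f)_H g|\le |\nb f|_H\cdot |a|_H|g|$ and then the $L^2$ Cauchy--Schwarz inequality to separate $|\nb f|_H$ from $|a|_H|g|$:
\[
\int_E |(a,\nb f)_H g|\,d\mu \le \left(\int_E |\nb f|_H^2\,d\mu\right)^{1/2}\left(\int_E |a|_H^2 g^2\,d\mu\right)^{1/2}.
\]
The first factor equals $(2\cE^0(f))^{1/2}$, and the second factor is controlled by \eqref{eq:cTa} applied to $g$, which is legitimate because $g\in\bigcup_k\D_{E_k,b}$.

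For \eqref{eq:cT2}, I would specialize \eqref{eq:cT1} to $g=f$ and then apply the weighted AM--GM inequality $\sqrt{XY}\le X/(2s)+sY/2$ with $X=2\cE^0(f)$, $Y=\a\cE^0(f)+\tilde\lm_\a\|f\|_2^2$, and $s=\sqrt{2/\a}$. The parameter $s$ is chosen so that the coefficient of $\|f\|_2^2$ on the right-hand side equals $\tilde\lm_\a/\sqrt{2\a}$; a brief check shows the coefficient of $\cE^0(f)$ then collapses to $\sqrt{\a/2}+\sqrt{\a/2}=\sqrt{2\a}$. For \eqref{eq:cT3}, I would rewrite $|a|_H f^2=(|a|_H f)\cdot f$ and apply $L^2$ Cauchy--Schwarz to obtain
\[
\int_E |a|_H f^2\,d\mu \le \|f\|_2\left(\int_E |a|_H^2 f^2\,d\mu\right)^{1/2}\le \|f\|_2\bigl(\a\cE^0(f)+\tilde\lm_\a\|f\|_2^2\bigr)^{1/2},
\]
after which a second application of weighted AM--GM, now with parameter $2\eps/\a$, produces exactly the stated coefficients $\eps$ and $\eps\tilde\lm_\a/\a+\a/(4\eps)$.

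I expect no genuine obstacle; the only point worth a moment of attention is confirming finiteness of the intermediate integrals, which follows from $|a|_H^2\in L^1_\loc(\mu,\{E_k\})$ together with the boundedness and support condition imposed on $f$ and $g$ (each lies in some $\D_{E_k,b}$, so $|a|_H f\in L^2(\mu)$ and hence $|a|_H f^2\in L^1(\mu)$). The main thing to keep straight is simply the matching of the weights in the two AM--GM steps, which is what forces the apparently slightly odd constants in \eqref{eq:cT2} and \eqref{eq:cT3}.
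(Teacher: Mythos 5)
Your proof is correct and follows essentially the same route as the paper: \eqref{eq:cT1} via Cauchy--Schwarz combined with \eqref{eq:cTa}, and \eqref{eq:cT2} by taking $g=f$ and splitting the product with exactly the paper's weights. The only cosmetic difference is in \eqref{eq:cT3}, where you apply Cauchy--Schwarz to $(|a|_H f)\cdot f$ and then a scalar weighted AM--GM, while the paper first uses the pointwise bound $|a|_H\le(\eps/\a)|a|_H^2+\a/(4\eps)$ and then invokes \eqref{eq:cTa}; both orderings yield the identical constants.
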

\begin{proof}
Equation~\eqref{eq:cT1} holds by combining the inequality
\[
\left|\int_E (a,\nb f)_H g\,d\mu\right|
\le \left(\int_E |\nb f|_H^2\,d\mu\right)^{1/2}\left(\int_E |a|_H^2 g^2\,d\mu\right)^{1/2}
\]
with \eqref{eq:cTa}.
By letting $g=f$,
\begin{align*}
\left|\int_E (a,\nb f)_H f\,d\mu\right|
&\le\left(2\cE^0(f)\right)^{1/2}\left(\a\cE^0(f)+\tilde\lm_\a\|f\|_2^2\right)^{1/2}\\
&\le\frac{\sqrt{2\a}}4\cdot2\cE^0(f)+\frac{1}{\sqrt{2\a}}\left(\a\cE^0(f)+\tilde\lm_\a\|f\|_2^2\right),
\end{align*}
which proves \eqref{eq:cT2}.
Equation~\eqref{eq:cT3} follows from the following calculation:
\begin{align*}
\int_E |a|_H f^2\,d\mu
&\le \frac{\eps}{\a}\int_E |a|_H^2 f^2\,d\mu+\frac{\a}{4\eps}\int_E f^2\,d\mu\\
&\le \frac{\eps}{\a}\left(\a\cE^0(f)+\tilde\lm_\a\|f\|_2^2\right)+\frac{\a}{4\eps}\|f\|_2^2.\qedhere
\end{align*}
\end{proof}
We can give sufficient conditions for Assumption~\ref{as:1}, (B.1), and (B.2$'$)  in terms of $\cT_\a$.
\begin{prop}\label{prop:suff}
Suppose $|b|_H,|c|_H\in L^2_\loc(\mu,\{E_k\})$ and $V\in L^1_\loc(\mu,\{E_k\})$ for some measurable nest $\{E_k\}_{k=1}^\infty$. 
Let $V_+$ denote $V\vee0$ and $V_-$ denote $(-V)\vee0$.
\begin{enumerate}
\item If $|b+c|_H^2\in \cT_{\a_1}$, $|b-c|_H^2\in\cT_{\a_2}$, $V_+\in\cT_{\a_3}$, $V_-\in \cT_{\a_4}$ with $\a_i>0$ $(i=1,2,3,4)$ and $\sqrt{2\a_1}+\a_4<1$, 
then {\rm(A.2)} holds.
\item If $|b-c|_H^2\in\cT_\infty$, then {\rm (B.1)} holds.
\item If $b$ and $c$ are decomposed into $b=b_1+b_2$ and $c=c_1+c_2$, respectively, such that $|b_1-c_1|_H^2\in\cT_{0+}$ and $\exp(\dl|b_2-c_2|_H^2)-1\in L^1(\mu)$ for some $\dl>0$, then {\rm (B.2$'$)} holds.
\end{enumerate}
\end{prop}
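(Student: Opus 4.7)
The plan is to verify each conclusion by direct application of Lemma~\ref{lem:cT}, with Proposition~\ref{prop:B2} invoked for part~(iii). Throughout I would work on a common measurable nest obtained by intersecting the various nests supplied by the $\cT_\a$-hypotheses and the initial $L^p_\loc$/$L^1_\loc$-hypothesis, so that all the cited inequalities hold on a common dense class of test functions.

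For part~(i), I would treat the two inequalities of (A.2) separately. For \eqref{eq:A2_1}, the gradient term is bounded by \eqref{eq:cT2} applied to $a=b+c$ with $\a=\a_1$, giving
\[
\Bigl|\int_E(b+c,\nb f)_H f\,d\mu\Bigr|\le \sqrt{2\a_1}\cE^0(f)+\frac{\tilde\lm_{\a_1}}{\sqrt{2\a_1}}\|f\|_2^2,
\]
while the potential term satisfies $-\int_E V f^2\,d\mu\le\int_E V_-f^2\,d\mu\le\a_4\cE^0(f)+\tilde\lm_{\a_4}\|f\|_2^2$ by \eqref{eq:cTa}. Summing produces \eqref{eq:A2_1} with $\eta=\sqrt{2\a_1}+\a_4<1$. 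For \eqref{eq:A2_2}, I would first observe via the parallelogram identity
\[
|b|_H^2+|c|_H^2=\tfrac12\bigl(|b+c|_H^2+|b-c|_H^2\bigr)
\]
that both $|b|_H^2$ and $|c|_H^2$ lie in $\cT_{(\a_1+\a_2)/2}$; then \eqref{eq:cT1} bounds the two gradient integrands of \eqref{eq:A2_2}. For the $Vfg$ term I would use Cauchy--Schwarz in the form $\int_E|V|\,|fg|\,d\mu\le\sum_{\pm}(\int_E V_\pm f^2\,d\mu)^{1/2}(\int_E V_\pm g^2\,d\mu)^{1/2}$ and dominate each factor through \eqref{eq:cTa} with $\a=\a_3$ or $\a=\a_4$. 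Taking $l$ large enough to absorb all the additive $\|\cdot\|_2^2$-terms into $\cE^0_l$ and collecting constants into a single $\om$ yields \eqref{eq:A2_2}.

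Part~(ii) is immediate: if $|b-c|_H^2\in\cT_\a$ for some $\a>0$, then \eqref{eq:cT2} applied to $a=b-c$ is exactly \eqref{eq:upperB1} with $\kp=\max\{\sqrt{2\a},\tilde\lm_\a/\sqrt{2\a}\}$. Part~(iii) reduces, via Proposition~\ref{prop:B2}, to verifying hypothesis~(i) there; hypothesis~(ii) is assumed. Applying \eqref{eq:cT3} to $a=b_1-c_1$ gives, for every $\eps>0$ and every auxiliary $\a>0$,
\[
\int_E|b_1-c_1|_H f^2\,d\mu\le\eps\cE^0(f)+\Bigl(\frac{\eps\tilde\lm_\a}{\a}+\frac{\a}{4\eps}\Bigr)\|f\|_2^2,
\]
so I would set $\hat\lm_\eps:=\inf_{\a>0}\bigl(\eps\tilde\lm_\a/\a+\a/(4\eps)\bigr)$, which realizes \eqref{eq:upperB1'}. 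The main (and only) subtle point is the required limit $\eps\hat\lm_\eps\to 0$: one cannot a priori choose an explicit $\a=\a(\eps)$ since no growth rate on $\tilde\lm_\a$ as $\a\to 0$ is available. The trick is to switch the order of limits. For each fixed $\a>0$ one has $\eps\hat\lm_\eps\le\eps^2\tilde\lm_\a/\a+\a/4$, so $\limsup_{\eps\to 0}\eps\hat\lm_\eps\le\a/4$; since $\a>0$ is arbitrary, $\lim_{\eps\to 0}\eps\hat\lm_\eps=0$, completing the verification.
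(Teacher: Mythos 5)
Your argument is correct and follows essentially the same route as the paper: Lemma~\ref{lem:cT} (with the parallelogram identity $|b|_H^2+|c|_H^2=\tfrac12(|b+c|_H^2+|b-c|_H^2)$) for parts (i) and (ii), and Proposition~\ref{prop:B2} together with the same $\hat\lm_\eps=\inf_{\a>0}(\eps\tilde\lm_\a/\a+\a/(4\eps))$ and the identical limsup-in-$\eps$-then-$\a$ argument for part (iii). The only cosmetic difference is that you bound the $Vfg$ term in \eqref{eq:A2_2} by splitting into $V_\pm$ rather than noting $|V|\in\cT_{\a_3+\a_4}$ as the paper does, which is an equivalent computation.
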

\begin{proof}
(i): Equation~\eqref{eq:A2_1} follows from \eqref{eq:cT2}. Since $|b|_H^2+|c|_H^2=(|b+c|_H^2+|b-c|_H^2)/2$, we see that $|b|_H^2, |c|_H^2\in \cT_{(\a_1+\a_2)/2}$.
Moreover, $|V|\in \cT_{\a_3+\a_4}$. Combining these and \eqref{eq:cT1}, we obtain \eqref{eq:A2_2}.

\noindent (ii): This follows from \eqref{eq:cT2}.

\noindent (iii): From \eqref{eq:cT3}, Eq.\,\eqref{eq:upperB1'} holds
with $\hat\lm_\eps=\inf_{\a>0}\left(\frac{\eps \tilde\lm_\a}{\a}+\frac{\a}{4\eps}\right)$ for $\eps>0$.
Moreover, for any $\a>0$,
\[
\varlimsup_{\eps\to0}\eps\hat\lm_\eps
\le \varlimsup_{\eps\to0}\eps\biggl(\frac{\eps \tilde\lm_\a}{\a}+\frac{\a}{4\eps}\biggr)
=\frac\a4,
\]
which implies that $\varlimsup_{\eps\to0}\eps\hat\lm_\eps=0$. 
Therefore, (B.2$'$) holds, from Proposition~\ref{prop:B2}.
\end{proof}

The following are alternative descriptions of (B.2)$_{A,B}$ and (B.2$'$).
\begin{prop}
{\rm(B.2)$_{A,B}$} is equivalent to the following condition:
\begin{enumerate}[(\~ B.2)$_{A,B}$]
\item[\rm(\~ B.2)$_{A,B}$]There exist $\gm\ge0$ and a nonnegative and non-decreasing function $\Om$ on $[0,\infty)$ such that $\lim_{x\to\infty}\Om(x)/x=0$ and
\begin{align}
&\int_{\{0<\sd_B<\sd(A,B)\}} (b-c, \nb \sd_B)_H f^2\,d\mu\le \Om\left(\cE^0(f)^{1/2}\right)+\gm\left(\int_E f^2\log^+ f^2\,d\mu\right)^{1/2}\notag\\
&\hspace{9em}\text{for any $N>0$ and } f\in\bigcup_{k=1}^\infty \D_{E_k,b}\text{ with }\|f\|_2=1.
\label{eq:tB2}
\end{align}
\end{enumerate}
Moreover, {\rm(B.2$'$)} is equivalent to the following condition:
\begin{enumerate}[\rm(\~ B.2$'$)]
\item[\rm(\~ B.2$'$)]There exist $\gm\ge0$ and a nonnegative and non-decreasing function $\Om$ on $[0,\infty)$ such that $\lim_{x\to\infty}\Om(x)/x=0$ and
\begin{align*}
\int_E |b-c|_H f^2\,d\mu&\le \Om\left(\cE^0(f)^{1/2}\right)+\gm\left(\int_E f^2\log^+ f^2\,d\mu\right)^{1/2}\notag\\
&\qquad\text{for any } f\in\bigcup_{k=1}^\infty \D_{E_k,b}\text{ with }\|f\|_2=1.
\end{align*}
\end{enumerate}
\end{prop}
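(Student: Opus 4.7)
My plan is to prove both equivalences by an elementary Legendre-transform duality between the family $\{\lm_\eps\}_{\eps>0}$ and the function $\Om$. The arguments for (B.2)$_{A,B}$ $\Leftrightarrow$ (\~ B.2)$_{A,B}$ and (B.2$'$) $\Leftrightarrow$ (\~ B.2$'$) are identical term-by-term, differing only in the form of the left-hand side of the inequality; so I would prove the first equivalence in detail and remark that the second one follows verbatim, since the only role of the logarithmic term $\gm(\int f^2\log^+ f^2\,d\mu)^{1/2}$ is to be carried through unchanged on both sides.

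For (\~ B.2)$_{A,B}$ $\Rightarrow$ (B.2)$_{A,B}$, given $\Om$ with $\lim_{x\to\infty}\Om(x)/x=0$, I would define
\[
\lm_\eps := \sup_{x\ge 0}\bigl(\Om(x)-\eps x^2\bigr),\qquad \eps>0.
\]
The sublinear growth of $\Om$ forces $\Om(x)-\eps x^2\to-\infty$ as $x\to\infty$, so $\lm_\eps<\infty$, and by construction $\Om(x)\le \eps x^2+\lm_\eps$; substituting $x=\cE^0(f)^{1/2}$ in (\~ B.2)$_{A,B}$ yields (B.2)$_{A,B}$. To verify $\eps\lm_\eps\to0$, fix $\eta>0$, choose $x_0$ with $\Om(x)\le\eta x$ for $x\ge x_0$, and split the supremum: on $[x_0,\infty)$ the function $\eta x-\eps x^2$ is maximized at $x=\eta/(2\eps)$ with value $\eta^2/(4\eps)$, while on $[0,x_0]$ we have $\Om(x)-\eps x^2\le \Om(x_0)$. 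Hence $\lm_\eps\le \Om(x_0)+\eta^2/(4\eps)$ and $\varlimsup_{\eps\to0}\eps\lm_\eps\le \eta^2/4$, so letting $\eta\to 0$ gives the desired limit.

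For the converse, given $\{\lm_\eps\}$ with $\lim_{\eps\to0}\eps\lm_\eps=0$, I would set
\[
\Om(y) := \inf_{\eps>0}\bigl(\eps y^2+\lm_\eps\bigr),\qquad y\ge0.
\]
This $\Om$ is nonnegative and non-decreasing (being the pointwise infimum of a family of non-decreasing functions of $y$), and (\~ B.2)$_{A,B}$ follows from (B.2)$_{A,B}$ by taking the infimum over $\eps>0$ on the right-hand side of the latter. For the sublinearity $\Om(y)/y\to0$, I would use a calibrated choice of $\eps$: for fixed $\eta>0$ and $y>0$, taking $\eps=\eta/y$ yields $\Om(y)\le \eta y+\lm_{\eta/y}$, hence
\[
\frac{\Om(y)}{y}\le \eta+\frac{1}{\eta}\cdot\frac{\eta}{y}\lm_{\eta/y}.
\]
Since $\eta/y\to0$ as $y\to\infty$, the hypothesis gives $(\eta/y)\lm_{\eta/y}\to0$, so $\varlimsup_{y\to\infty}\Om(y)/y\le \eta$; letting $\eta\to0$ finishes the argument.

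The only step that requires any thought is recognizing the Legendre-dual structure linking the two sublinearity requirements; once that is set up, both implications reduce to routine one-variable optimization, and no input from the Dirichlet-form machinery is used. I do not anticipate any serious obstacle.
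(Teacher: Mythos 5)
Your proposal is correct and follows essentially the same route as the paper: the same Legendre-type duality, with $\lm_\eps=\sup(\Om(x)-\eps x^2)$ in one direction and $\Om(y)=\inf_\eps(\eps y^2+\lm_\eps)$ in the other, and the same calibrated choices ($x=\eta/(2\eps)$, resp.\ $\eps=\eta/y$) to verify $\eps\lm_\eps\to0$ and $\Om(y)/y\to0$. The differences (parametrizing by $x$ instead of $t^{1/2}$, bounding the split supremum by a sum rather than a maximum) are purely cosmetic, and your observation that the logarithmic term just rides along so that (B.2$'$)$\Leftrightarrow$(\~B.2$'$) is verbatim matches the paper's treatment.
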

\begin{proof}
Suppose (B.2)$_{A,B}$ holds.
For $t\ge0$, we define $\Om(t)=\inf_{\eps>0}(\eps t^2+\lm_\eps)$.
(B.2)$_{A,B}$ implies \eqref{eq:tB2}.
Moreover, $\Om$ is non-decreasing by definition. 
For any $\a>0$, we have
\begin{align*}
\frac{\Om(t)}{t}=\inf_{\eps>0}\left(\eps t+\frac{\lm_\eps}{t}\right)
\le \a+\frac{\lm_{\a/t}}{t}
\xrightarrow{t\to\infty}\a.
\end{align*}
Since $\a>0$ is arbitrary, we conclude that $\Om(t)/t$ converges to $0$ as $t\to\infty$. Thus, (\~B.2)$_{A,B}$ holds.

Conversely, assume (\~B.2)$_{A,B}$.
Define $\lm_\eps=\sup_{t>0}(\Om(t^{1/2})-\eps t)$ for each $\eps>0$.
Then, \eqref{eq:B2} holds.
Furthermore, for any $\a>0$, there exists some $T>0$ such that $\Om(t^{1/2})/t^{1/2}\le \a$ for all $t\ge T$. 
Then,
\begin{align*}
\eps\lm_\eps
&\le \sup_{t\in[0,T)}(\eps\Om(t^{1/2})-\eps^2 t)\vee \sup_{t\in[T,\infty)}(\eps\Om(t^{1/2})-\eps^2 t)\\
&\le \eps\Om(T^{1/2})\vee \sup_{t\in[T,\infty)}(\eps\a t^{1/2}-\eps^2 t)\\
&\le \eps\Om(T^{1/2})\vee (\a^2/4).
\end{align*}
Letting $\eps\to0$ and $\a\to0$, we have $\lim_{\eps\to0}\eps\lm_\eps=0$.
Therefore, (B.2)$_{A,B}$ holds.

The equivalence of (B.2$'$) and (\~B.2$'$) is proved in the same way.
\end{proof}

We discuss some typical examples of non-symmetric forms that satisfy Assumption~\ref{as:1}, (B.1), and (B.2$'$). 
\begin{example}\label{ex:1}
We assume the Sobolev inequality: for some $d>2$ and $S \ge 0$,
\begin{equation} \label{eq:sobolev}
\|f\|^{2}_{2d/(d-2)} \le S \, \cE^{0}_1(f), \quad f \in \D.
\end{equation}
A typical example is the following: $E=\R^d$; $\mu=dx$ (Lebesgue measure); $\D=H^1(\R^d)$ (the first-order $L^2$-Sobolev space); and $D=A(x)\nabla$, where $A(\cdot)$ is an $\R^{d\times d}$-valued measurable function on $E$ such that there exist $C>0$ satisfying $C^{-1} I\le {}^tA(x) A(x)\le C I$ for $\mu$-a.e.\,$x$ in the quadratic form sense.

Let $\psi\in L^{d/2}(\mu)+L^\infty(\mu)$. That is, let $\psi=\hat\psi+\check\psi$ for some $\hat\psi\in L^{d/2}(\mu)$ and $\check\psi\in L^\infty(\mu)$.
Take any $\eps>0$. By using \eqref{eq:sobolev}, for any $f \in \D$,
\begin{align*}
\int_{E}|\hat\psi|f^{2}\,d\mu &\le \| \bone_{\{ |\hat\psi|>N \} } \hat\psi\|_{d/2} \|f\|_{2d/(d-2)}^2+ N\int_{\{|\hat\psi|\le N \}} f^{2}\,d\mu \\
&\le S\| \bone_{\{|\hat\psi|>N \} }\hat\psi\|_{d/2}\cE^{0}_1(f)+ N^2\|f\|^{2}_{2}
\end{align*}
for any $N>0$. 
Take $N$ large enough that $S\| \bone_{\{|\hat\psi|>N \} }\hat\psi\|_{d/2}\le \eps$.
Then,
\[
\int_{E}|\psi|f^{2}\,d\mu 
\le \eps\cE^0(f)+(\eps+N^2+\|\check\psi\|_\infty)\|f\|_2^2.
\]
Therefore, $L^{d/2}(\mu)+L^\infty(\mu)\subset\cT_{0+}$.
In particular, due to Proposition~\ref{prop:suff}, each of 
Assumption~\ref{as:1}, (B.1), and (B.2$'$) with $\gm=0$ is satisfied 
if $|b|_H,|c|_H\in L^d(\mu)+L^\infty(\mu)$ and $V\in L^{d/2}(\mu)+L^\infty(\mu)$. 
In this case, the logarithmic term in (B.2$'$) is not useful.
\end{example}
\begin{example}\label{ex:2}
Assume that $\mu(E)<\infty$ and that $(\cE^{0},\D)$ satisfies the defective logarithmic Sobolev inequality:
\begin{equation} \label{eq:logsobolev}
\int_{E}f^{2} \log ( f^2/\|f\|_{2}^2 )\,d\mu \le \a\cE^{0}(f)+\b\|f\|_2^2,\quad f \in \D.
\end{equation}
A typical example is the following: $(E,H,\mu)$ is an abstract Wiener space,
$D$ is the $H$-derivative in the sense of the Malliavin calculus, 
and $\D$ is the first-order $L^2$ Sobolev space.
In this case, we can take $\a=4$ and $\b=0$.

Let $\psi\in L^0(\mu)$.
By the Hausdorff--Young inequality $st \le s\log s-s+e^t$ for $s\ge0$ and $t\in\R$, 
\begin{align*}
\dl |\psi|g^{2} \le g^2\log g^2-g^2+e^{\dl|\psi|}\quad
\text{for $\dl>0$ and $g\in L^2(\mu)$}.
\end{align*}
Taking $g=f/\|f\|_{2}$ for $f\in\D$ and using \eqref{eq:logsobolev}, we have
\begin{align*}
\int_{E}|\psi|f^{2}\,d\mu \le 
\frac{\a}{\dl} \cE^{0}(f)+\frac{ \b-1+\|e^{\dl|\psi|} \|_{1}}{\dl}\|f\|_{2}^2.
\end{align*}
Therefore, $|\psi|\in \cT_{\a/\dl}$ if $e^{\dl|\psi|}\in L^1(\mu)$.
Assumption~\ref{as:1}, (B.1), and (B.2$'$)  hold if 
\[
e^{\dl_1|b+c|^2_H}, e^{\dl_2|b-c|^2_H}, e^{\dl_3 V_+}, e^{\dl_4 V_-} \in L^1(\mu) \text{ with $\dl_i>0$ $(i=1,2,3,4)$ and $\sqrt{2\a/\dl_1}+\a/\dl_4<1$}, 
\]
by applying Proposition~\ref{prop:suff} with $b_1=0$, $b_2=b$, $c_1=0$, and $c_2=c$. 
We cannot expect that (B.2$'$) will hold with $\gm=0$ in general; thus, the introduction of the logarithmic term in (B.2$'$) is effective in this case.
\end{example}





\section*{\refname}

\end{document}